\newcommand{\doublespace}
   {\addtolength{\baselineskip}{0.15\baselineskip}}
\newtheorem{pdef}{Definition}[section] %
\newtheorem{thm}[pdef]{Theorem}        
\newtheorem{cor}[pdef]{Corollary}
\newtheorem{lem}[pdef]{Lemma}
\newtheorem{exam}[pdef]{Example}
\newtheorem{remark}[pdef]{Remark}
\newtheorem{prop}[pdef]{Proposition}
\newcounter{equationnumber}
\renewcommand{\theequation}{\thesection.\arabic{equation}}
\def\mathletters{
    \addtocounter{equation}{1}
    \edef\@currentlabel{\theequation}
    \setcounter{equationnumber}{\value{equation}}
    \setcounter{equation}{0}
    \edef\theequation{\@currentlabel\noexpand\alph{equation}}
    }
\title{Regularity results for free L\'{e}vy processes}
\author{Hao-Wei Huang and Jiun-Chau Wang}
\date {\today}
\address{Department of Mathematics, National Tsing Hua University,
No. 101, Section 2, Kuang-Fu Road, Hsinchu 300044, Taiwan R.O.C.}
\email{huanghw@math.nthu.edu.tw}
\address{Department of Mathematics and Statistics, University of
Saskatchewan, Saskatoon, Saskatchewan S7N 5E6, Canada}
\email{jcwang@math.usask.ca}
\begin{document}

\begin{abstract} Given a free additive convolution semigroup $\left(\mu_t\right)_{t\geq 0}$ and a probability measure $\nu$ on $\mathbb{R}$, we find the necessary and sufficient conditions for the process $\mu_t \boxplus \nu$ to be Lebesgue absolutely continuous with a positive and analytic density throughout $\mathbb{R}$ at all time $t>0$. For semigroups without this property, we find the necessary and sufficient conditions for the density of $\mu_t \boxplus \nu$ to be analytic at its zeros. These results are quantified by the L\'{e}vy measure of the semigroup, making it fairly easy to construct many concrete examples. Finally, we show that $\mu_t \boxplus \nu$ has a finite number of connected components in its support if both the L\'{e}vy measure of $\left(\mu_t\right)_{t \geq 0}$ and the initial law $\nu$ do.
\end{abstract}
\maketitle \doublespace \pagestyle{myheadings} \thispagestyle{plain}
\markboth{   }{ }

\footnotetext[1]{{\it 2000 Mathematics Subject Classification:}
46L54} \footnotetext[2]{{\it Key words and phrases.}\,Free convolution; freely infinitely divisible law; free L\'{e}vy process}

\section{Introduction}
Recall from Biane's work [\ref{Biane98}] that a free (additive) L\'{e}vy process $\{Z_t\}_{t\geq 0}$ is a noncommutative stochastic process of self-adjoint random variables such that $Z_{0}=0$, $Z_t$ tends weakly to $0$ as $t\rightarrow 0$, and the increments $Z_t-Z_s$ are freely independent and stationary. Thus, denoting by $\mu_t$ the distribution of $Z_t$, the family $\left(\mu_t\right)_{t\geq 0}$ satisfies the weak convergence $\mu_t \Rightarrow \mu_{0}=\delta_0$ as $t\rightarrow 0$ and forms a \emph{$\boxplus$-semigroup} under the free additive convolution $\boxplus$ in the sense that \[\mu_{t+s}=\mu_{t}\boxplus\mu_{s},\quad s,t \geq 0.\] In particular, every $\mu_t$ is \emph{$\boxplus$-infinitely divisible}; and conversely, every $\boxplus$-infinitely divisible measure $\mu$ embeds in a unique $\boxplus$-semigroup $\left(\mu_t\right)_{t\geq 0}$ such that $\mu_1=\mu$. Since its appearance in [\ref{Biane98}], there has been an intensive research on free L\'{e}vy processes. For examples, see [\ref{Nielsen05}] for a free analogue of the L\'{e}vy-It\^{o} decomposition,  [\ref{Anshelevich02}] for a construction of stochastic integrals, and [\ref{Anshelevich13}] for a formula of the infinitesimal generator of a free L\'{e}vy process.

The current paper contributes to this line of research by proving regularity results of the free convolution $\mu_t\boxplus \nu$ where $\nu$ belongs to $\mathcal{P}_{\mathbb{R}}$, the set of all probability measures on $\mathbb{R}$. Our results are concerned with the regularizing effect of $\left(\mu_t \right)_{t \geq 0}$. Recall the \emph{property} (H) for a measure $\mu\in\mathcal{P}_{\mathbb{R}}$ as follows:
\begin{enumerate}
\item [(H)] For all $\nu \in \mathcal{P}_{\mathbb{R}}$, the measure $\mu \boxplus \nu$ is Lebesgue absolutely continuous and has a strictly positive, analytic density everywhere on $\mathbb{R}$.
\end{enumerate} A process $\left(\mu_t \right)_{t \geq 0}$ is said to have property (H) if every marginal distribution $\mu_t$ has this property. Thus, such a $\left(\mu_t \right)_{t \geq 0}$ can serve as a distributional mollifier in a rather strong sense. Property (H) was introduced and studied in the paper  [\ref{BBG09}]  where some sufficient conditions for $\left(\mu_t \right)_{t \geq 0}$ to satisfy (H) were found, along with some examples. It was also shown in [\ref{BBG09}]  that in order for $\left(\mu_t\right)_{t\geq 0}$ to have property (H), each $\mu_t$ cannot have finite second moment.

For $\nu\in\mathcal{P}_{\mathbb{R}}$ and an $\boxplus$-infinitely divisible measure $\mu$, a formula for the density $p_{\mu\boxplus\nu}$ of the absolutely continuous part of $\mu\boxplus\nu$ is found in Theorem \ref{thmR1} and we show later in Theorem \ref{propertyH1} that $p_{\mu\boxplus\nu}$ is everywhere positive and analytic on $\mathbb{R}$ if and only if \[\int_{\mathbb{R}}s^2\,d\sigma_{\mu}(s)=+\infty\quad\text{and}\quad \int_{\mathbb{R}}\frac{1+s^2}{(x-s)^2}\,d\sigma_{\mu}(s)>1\] for every $x\in \mathbb{R}$. Here the measure $\sigma_{\mu}$ is the \emph{L\'{e}vy measure} of $\mu$ (see Section 2). This result yields the necessary and sufficient conditions for a free L\'{e}vy process to have property (H), as seen in Corollary \ref{propertyH3}. Notice that the value of the second moment $m_2(\sigma_{\mu})$ of $\sigma_{\mu}$ being infinite is known to be equivalent to $m_2(\mu)=+\infty$. For a semigroup $\left(\mu_t \right)_{t \geq 0}$ without (H), in Theorem \ref{analyticR1}, we find the necessary and sufficient conditions for the density $p_{\mu_t\boxplus \nu}$ to be analytic at its zeros. Examples are given to illustrate such an analyticity behavior, and our result subsumes all known cases of this phenomenon in the context of free L\'{e}vy process, including Biane's classical example of free Brownian motion [\ref{Biane97}]. Finally, Theorem \ref{nocomponentR2} shows that $\mu\boxplus \nu$ has a finite number of connected components in its support if both the L\'{e}vy measure $\sigma_{\mu}$ and $\nu$ do.

The main tool in our proofs is a global inversion result which can be traced back to Proposition 5.12 in the original free convolution paper [\ref{BerVoicu93}] of Bercovici and Voiculescu. Such a global inversion technique is the basis of Biane's analysis for free Brownian motion in [\ref{Biane97}]. Later on, this technique was further studied in great detail and applied to the study of partially defined free convolution semigroups in [\ref{BelBer05}]. While the arguments in [\ref{BelBer05}] made a clever use of the Denjoy-Wolff fixed point theory to produce a powerful general theory of global inversion, our approach based on calculus and integration is tailor-made for free L\'{e}vy processes. This real-variable method is closely related to Biane's work [\ref{Biane97}] and the analysis used in [\ref{Huang14}] for partially defined free convolution semigroups. In addition, our method makes it fairly easy to construct processes with specific regularity properties, as illustrated by the examples in this paper.

The multiplicative analogs of regularities properties will appear in a forthcoming paper.

This paper is organized as follows. After collecting some preliminary material in Section 2, we state our main results and examples in Section 3. The detailed proofs of these results are presented in Section 4. We shall now begin by reviewing the essence of free harmonic analysis.

\section{Preliminaries}
\subsection{Integral transforms} Denote by $\mathcal{M}_{\mathbb{R}}$ the family of finite positive Borel measures on $\mathbb{R}$ and recall that $\mathcal{P}_{\mathbb{R}}=\{\mu\in \mathcal{M}_{\mathbb{R}}: \mu(\mathbb{R})=1\}$. For any $\nu \in \mathcal{M}_{\mathbb{R}}\setminus \{0\}$, we consider its \emph{Cauchy transform} \[G_{\nu}(z)=\int_{\mathbb{R}} \frac{1}{z-s}\,d\nu(s),\] an analytic map from the complex upper half-plane $\mathbb{C}^{+}$ into the lower half-plane $\mathbb{C}^{-}=-\mathbb{C}^{+}$. The map $G_{\nu}$ determines the measure $\nu$ uniquely. The \emph{$F$-transform} of $\nu$ is defined to be the reciprocal $F_{\nu}=1/G_{\nu}$, an analytic self-map of $\mathbb{C}^{+}$.

It is well-known that every analytic map $F:\mathbb{C}^{+}\rightarrow \mathbb{C}^{+}\cup\mathbb{R}$ admits a unique \emph{Nevanlinna representation}: \begin{equation}\label{eq:2.2}
F(z)=a+bz+\int_{\mathbb{R}}\frac{1+sz}{s-z}\,d\rho(s), \quad z \in \mathbb{C}^{+},
\end{equation} where $a=\Re F(i)$, $b=\lim_{y\rightarrow \infty}F(iy)/iy\geq 0$, and $\rho \in \mathcal{M}_{\mathbb{R}}$. The measure $\rho$ in this case will be called the \emph{$F$-representing measure} of $F$. Thus, every $F$-transform $F_\nu$ admits the integral form \eqref{eq:2.2}, with $b=1/\nu(\mathbb{R})$, and we write $\rho=\rho_\nu$ in this case. By virtue of the identity $sz/(s-z)=z+z^2/(s-z)$, we sometimes write the integral form \eqref{eq:2.2} in the following way: \[F(z)=a+(b+\rho(\mathbb{R}))z-(1+z^2)G_{\rho}(z),\quad z\in\mathbb{C}^{+}.\]

It follows from \eqref{eq:2.2} that the $F$-transform of a measure $\nu\in \mathcal{P}_{\mathbb{R}}$ satisfies $\Im F_{\nu}(z)\geq \Im z$ for $z \in \mathbb{C}^{+}$. If equality occurs for some $z$ in $\mathbb{C}^+$, then $\nu$ is \emph{degenerate}, that is, $\nu$ is the point mass $\delta_a$ at some $a\in\mathbb{R}$. Moreover, the limit $\lim_{y\rightarrow \infty} F_{\nu}(iy)/iy =1$ implies that there exist positive numbers $\alpha$ and $\beta$ such that $F_{\nu}$ has an analytic right inverse $F_{\nu}^{-1}$ defined in the truncated cone $\Gamma_{\alpha,\beta}=\{x+iy: |x|<\alpha y,\,y>\beta\}$. The \emph{Voiculescu transform} $\varphi_{\nu}$ is defined as $\varphi_{\nu}(z)=F_{\nu}^{-1}(z)-z$, and the identity \[\varphi_{\nu_1 \boxplus \nu_2}(z)=\varphi_{\nu_1}(z)+\varphi_{\nu_2}(z)\] holds in a cone on which the three Voiculescu transforms are defined, see [\ref{BerVoicu93}]. The Voiculescu transform $\varphi_{\nu}$ is related to the \emph{$R$-transform} $R_{\nu}$ via the formula $R_{\nu}(z)=\varphi_{\nu}(1/z)$.

\subsection{Infinite divisibility and L\'{e}vy measure} Recall that a measure $\mu \in \mathcal{P}_{\mathbb{R}}$ is said to be \emph{$\boxplus$-infinitely divisible} if for each $n \in \mathbb{N}$, there exists a measure $\mu_{n} \in \mathcal{P}_{\mathbb{R}}$ such that $\mu=\mu_{n}\boxplus\cdots \boxplus \mu_{n}$ ($n$ times). The set of all $\boxplus$-infinitely divisible measures is denoted by $\mathcal{ID}(\boxplus)$. It was shown in [\ref{BerVoicu93}] that $\mu\in \mathcal{ID}(\boxplus)$ if and only if its Voiculescu transform $\varphi_{\mu}$ satisfies the \emph{free L\'{e}vy-Hin\v{c}in formula}:
\begin{equation} \label{eq:2.3}
\varphi_{\mu}(z)=\gamma+\int_{\mathbb{R}}\frac{1+sz}{z-s}\,d\sigma(s), \quad z \in \mathbb{C}^{+},
\end{equation} where $\gamma \in \mathbb{R}$ and $\sigma \in \mathcal{M}_{\mathbb{R}}$. The number $\gamma$ and the measure $\sigma$ in \eqref{eq:2.3} are unique, and the measure $\sigma$ will be called the \emph{L\'{e}vy measure} of the infinitely divisible measure $\mu$. We  write $\gamma=\gamma_{\mu}$ and $\sigma=\sigma_{\mu}$ to indicate the correspondence between these parameters and the measure $\mu$ through \eqref{eq:2.3}.

The equivalence of classical, free, and Boolean central limit theorems [\ref{Pata}, \ref{BerPata99}] shows that $m_2(\mu)<+\infty$ if and only if $m_2(\sigma_{\mu})<+\infty$ if and only if $m_2(\rho_{\mu})<+\infty$, where $\rho_{\mu}$ is the $F$-representing measure of $F_{\mu}$; in which case we also have the first moment $m_1(\mu)=\gamma_{\mu}+m_1(\sigma_{\mu})$ and the variance $var(\mu)=m_2(\mu)-[m_1(\mu)]^2=\sigma_{\mu}(\mathbb{R})+m_2(\sigma_{\mu})$.

Note that $\varphi_{\mu_t}(z)=t\,\varphi_{\mu_1}(z)$ for all $z\in\mathbb{C}^{+}$ and $t>0$ in a $\boxplus$-semigroup $\left(\mu_t\right)_{t\geq 0}$. In particular, one has $\sigma_{\mu_t}=t\,\sigma_{\mu_1}$ and we shall call $\sigma_{\mu_1}$ the L\'{e}vy measure of the semigroup $\left(\mu_t\right)_{t\geq 0}$.

\subsection{Boundary behavior of analytic maps} Given a nonconstant map $f:\mathbb{C}^{+}\rightarrow \mathbb{C}$, its \emph{vertical limit} $f^{*}(\alpha)$ at a point $\alpha\in\mathbb{R}$ is defined as \[f^{*}(\alpha)=\lim_{\epsilon \rightarrow 0^{+}}f(\alpha+i\epsilon),\] and $f$ is said to have an \emph{angular derivative} $f^{\prime}(\alpha)$ at $\alpha$ if for some $s\in \mathbb{R}$, the limit \[f^{\prime}(\alpha)=\lim_{z\rightarrow_{\sphericalangle}\alpha} \frac{f(z)-s}{z- \alpha}\] exists in $\mathbb{C}$. Here the \emph{non-tangential convergence} $z\rightarrow_{\sphericalangle}\alpha$ means that the quantity $|\Re z -\alpha|/\Im z$ remains bounded as $z$ tends to the real number $\alpha$. We also say that $z\rightarrow \infty$ non-tangentially if $|z|\rightarrow +\infty$ and $|\Re z|/\Im z$ is bounded. Note that if $f^{\prime}(\alpha)$ exists then $\lim_{z\rightarrow_{\sphericalangle}\alpha}f(z)$ exists and is equal to the real number $s$.

We mention a few well-known results about non-tangential and vertical limits. The upper half-plane version presented here is summarized from Section 2 of [\ref{Belinschi08}]. First, Fatou Theorem shows that if $f:\mathbb{C}^{+}\rightarrow \mathbb{C}^{+}\cup\mathbb{R}$ is analytic, then $f^{*}(\alpha)$ exists in $\mathbb{C}$ for almost all $\alpha \in \mathbb{R}$ with respect to Lebesgue measure $\lambda$. Next, Lindel\"{o}f theorem for an analytic map $f:\mathbb{C}^{+}\rightarrow \mathbb{C}$ states that if $\mathbb{C}\setminus f(\mathbb{C}^{+})$ contains at least three points and if $\gamma:[0,1) \rightarrow \mathbb{C}^{+}$ is a curve such that $\lim_{t\rightarrow 1^{-}}\gamma(t)=\alpha \in \mathbb{R}$ and the limit $L=\lim_{t\rightarrow 1^{-}}f(\gamma(t))$ exists in the \emph{extended complex plane} $\mathbb{C}\cup \{\infty\}$, then the non-tangential limit $\lim_{z\rightarrow_{\sphericalangle}\alpha} f(z)$ exists in $\mathbb{C}\cup \{\infty\}$ and $\lim_{z\rightarrow_{\sphericalangle}\alpha} f(z)=L$. In particular, we have \[\lim_{z\rightarrow_{\sphericalangle}\alpha}  f(z)=f^{*}(\alpha),\] provided that any of the two limits exists in $ \mathbb{C}\cup \{\infty\}$.

The Julia-Carath\'{e}odory theory for the angular derivatives is as follows (cf. Theorem 2.1 of [\ref{BelinschiJCW}]). Suppose that $f$ is an analytic self-map of $\mathbb{C}^{+}$ and $\alpha\in \mathbb{R}$. One has
\[
c=\liminf_{z\rightarrow\alpha}\frac{\Im f(z)}{\Im z}<+\infty
\]
if and only if the angular derivative $f^{\prime}(\alpha)$ exists; in which case we have $c>0$, $f^*(\alpha)\in\mathbb{R}$, and
\begin{equation}\label{eq:2.7}
f^{\prime}(\alpha)=c.
\end{equation} Alternatively, assuming $f^*(\alpha)\in\mathbb{R}$, one has $c=+\infty$ if and only if
\[
\lim_{z\rightarrow_{\sphericalangle}\alpha}\left|\frac{f(z)-f^*(\alpha)}{z-\alpha}\right|=+\infty,
\] and we write $f^{\prime}(\alpha)=+\infty$ in this case.

The boundary behavior of $F$-transform is closely related to the regularity of the underlying measure. We now review the relevant results from the books [\ref{Cauchytrans}] and [\ref{BSimon}].
First, recall that a \emph{support} of a Borel measure $\mu$ is a Borel set whose complement is a $\mu$-measure zero set. The \emph{topological support} $\text{supp}(\mu)$ is the intersection of all closed sets that support the measure $\mu$.

Let $\mu=\mu_{\text{ac}}+\mu_{\text{s}}$ be the Lebesgue decomposition of $\mu \in \mathcal{P}_{\mathbb{R}}$. The singular part $\mu_{\text{s}}$ is supported on the $\lambda$-measure zero set \[\{s \in \mathbb{R}: (-\Im G_{\mu})^{*}(s)=+\infty\},\] and the atoms of $\mu$ can be detected through the non-tangential limit [\ref{BerVoicu98}]: \begin{equation}\label{eq:2.8}\lim_{z\rightarrow_{\sphericalangle}s} (z-s)G_{\mu}(z)=\mu(\{s\}),\quad s \in \mathbb{R}.\end{equation} So, a point $\alpha\in\mathbb{R}$ is an atom of $\mu$ if and only if $F_{\mu}^{*}(\alpha)=0$ and the angular derivative $F_{\mu}^{\prime}(\alpha)=1/\mu(\{\alpha\})<+\infty$. The absolutely continuous part $\mu_{\text{ac}}$ may be given by \[d\mu_{\text{ac}}(s)=\frac{1}{\pi} (-\Im G_{\mu})^{*}(s)\,d\lambda(s),\] where the vertical limit function $(-\Im G_{\mu})^{*}$ is well-defined almost everywhere on $\mathbb{R}$ by the aforementioned result of Fatou.

We now present some useful consequences of the Julia-Carath\'{e}odory theory. Most of the results below are known already. We provide their proofs here only for reader's convenience and for the sake of completeness.

\begin{prop} \label{prop2.1} Let $\nu$ be a non-zero Borel positive measure on $\mathbb{R}$ such that
\[d\rho(s)=\frac{d\nu(s)}{1+s^2} \in \mathcal{M}_{\mathbb{R}}.\]
\begin{enumerate} [$\qquad(1)$]
\item For every $\alpha \in\mathbb{R}$, we have the formula \begin{equation}\label{eq:2.9}\liminf_{z\rightarrow
\alpha}\int_\mathbb{R}\frac{d\nu(s)}{|z-s|^2}=
\int_\mathbb{R}\frac{d\nu(s)}{(\alpha-s)^2}=
\lim_{z\rightarrow_{\sphericalangle}\alpha}\int_\mathbb{R}\frac{d\nu(s)}{|z-s|^2},\end{equation} where these
equalities are considered in $(0,+\infty]$.
\item Suppose in addition that $\nu(\mathbb{R})<\infty$, and let $G_{\nu}$ be its Cauchy transform. Then the common quantity in
\eqref{eq:2.9} is finite at $\alpha\in\mathbb{R}$ if and only if the angular derivative $G_{\nu}^{\prime}(\alpha)$ exists at $\alpha$. In this case we have\emph{:} \[G_{\nu}^{\prime}(\alpha)=-\int_\mathbb{R}\frac{d\nu(s)}{(\alpha-s)^2}\] and \[G_{\nu}(z)\rightarrow_{\sphericalangle}G_{\nu}^{*}(\alpha)=\int_\mathbb{R}\frac{d\nu(s)}{\alpha-s} \quad \text{as} \quad z\rightarrow_{\sphericalangle}\alpha.\]
\item Let $F(z)=a+(b+\rho(\mathbb{R}))z-(1+z^2)G_{\rho}(z)$ be a Nevanlinna form, and recall that $d\nu(s)=(1+s^2)\,d\rho(s)$. Then \eqref{eq:2.9} has a finite value at
$\alpha\in\mathbb{R}$ if and only if the angular derivative $F^{\prime}(\alpha)$ exists at $\alpha$. In this case we have: \[F(z)\rightarrow_{\sphericalangle}F^{*}(\alpha)=a+b\alpha+\int_\mathbb{R}\frac{1+s\alpha}{s-\alpha}\,d\rho(s) \quad \text{as} \quad z\rightarrow_{\sphericalangle}\alpha,\] and \[F^{\prime}(\alpha)=b+\int_\mathbb{R}\frac{1+s^2}{(s-\alpha)^2}\,d\rho(s).\]
\end{enumerate}
\end{prop}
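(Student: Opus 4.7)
My plan is to derive all three assertions from the Julia--Carath\'{e}odory theorem recalled in the excerpt, combined with Fatou's lemma, dominated convergence, and one uniform cone estimate on $|z-s|$. In each part there is an analytic self-map of $\mathbb{C}^{+}$ whose normalised Poisson kernel is precisely the integrand in question, so Julia--Carath\'{e}odory automatically gives equality of the liminf and the non-tangential limit of $\Im f(z)/\Im z$; the remaining task is to identify this common value and, when the hypothesis allows, the corresponding vertical limit of $f$ itself.

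For Part (1), I would set $F(z)=\int_{\mathbb{R}}(1+sz)/(s-z)\,d\rho(s)$, which is an analytic map from $\mathbb{C}^{+}$ into $\mathbb{C}^{+}\cup\mathbb{R}$ because $\rho\in\mathcal{M}_{\mathbb{R}}$ is non-zero. A direct computation shows
\[
\frac{\Im F(z)}{\Im z}=\int_{\mathbb{R}}\frac{1+s^{2}}{|s-z|^{2}}\,d\rho(s)=\int_{\mathbb{R}}\frac{d\nu(s)}{|s-z|^{2}}.
\]
Fatou's lemma gives the lower bound $\int d\nu(s)/(\alpha-s)^{2}$ for the full liminf. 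For the matching upper bound I would verify a uniform cone estimate $|z-s|^{2}\geq c_{M}(\alpha-s)^{2}$, valid whenever $|\Re z-\alpha|\leq M\,\Im z$, obtained by separately considering $|\Re z-s|\geq 2|\Re z-\alpha|$ and its complement. This provides a dominating function allowing dominated convergence to identify the non-tangential limit with $\int d\nu(s)/(\alpha-s)^{2}$ when that integral is finite; when it is not, Fatou already forces all three quantities in \eqref{eq:2.9} to be $+\infty$.

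Part (2) is Julia--Carath\'{e}odory applied to $-G_{\nu}$, an analytic self-map of $\mathbb{C}^{+}$ because $\nu$ is non-zero and finite. The Poisson formula $\Im(-G_{\nu})(z)/\Im z=\int d\nu(s)/|z-s|^{2}$ combined with Part (1) shows the existence of $(-G_{\nu})'(\alpha)$ is equivalent to the finiteness of $\int d\nu(s)/(\alpha-s)^{2}$, and the sign change yields the stated formula for $G_{\nu}'(\alpha)$. The value of $G_{\nu}^{*}(\alpha)$ follows from dominated convergence applied to $G_{\nu}(z)=\int d\nu(s)/(z-s)$; the cone estimate gives $1/|z-s|\leq c_{M}^{-1/2}/|\alpha-s|$, and the dominating function is integrable because $\int d\nu(s)/|\alpha-s|\leq\nu(\mathbb{R})^{1/2}(\int d\nu(s)/(\alpha-s)^{2})^{1/2}$ by Cauchy--Schwarz.

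For Part (3), the Poisson-type calculation gives $\Im F(z)/\Im z=b+\int d\nu(s)/|s-z|^{2}$, so Part (1) immediately yields the equivalence and the formula for $F'(\alpha)$. To evaluate $F^{*}(\alpha)$ I would use the algebraic identity
\[
\frac{1+sz}{s-z}-\frac{1+s\alpha}{s-\alpha}=\frac{(z-\alpha)(1+s^{2})}{(s-z)(s-\alpha)},
\]
verified by clearing denominators. Integrating against $d\rho$ and applying Cauchy--Schwarz bounds the difference by $|z-\alpha|\bigl(\int d\nu(s)/|s-z|^{2}\bigr)^{1/2}\bigl(\int d\nu(s)/(\alpha-s)^{2}\bigr)^{1/2}$, which vanishes non-tangentially under the finiteness assumption. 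The main obstacle is the uniform cone estimate, since it must simultaneously control integrands for $s$ close to $\alpha$, where $|z-s|$ can shrink to $\Im z$, and for $s$ far from $\alpha$, where $|z-s|$ should be comparable to $|\alpha-s|$; once the two regimes are matched the rest is routine.
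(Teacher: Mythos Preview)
Your approach is correct and essentially matches the paper's: Fatou's lemma plus a dominated-convergence bound for Part~(1), Julia--Carath\'{e}odory applied to $-G_{\nu}$ for Part~(2), and the analogous Nevanlinna-form computation for Part~(3). Your cone estimate $|z-s|^{2}\geq c_{M}(\alpha-s)^{2}$ is a clean repackaging of the explicit dominating function the paper obtains by estimating $\bigl|\,|z-s|^{-2}-s^{-2}\bigr|$ directly, and your algebraic identity in Part~(3) spells out what the paper waves away with ``same arguments as (2)''.

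One piece of the statement you do not address: the conclusions $G_{\nu}(z)\rightarrow_{\sphericalangle}G_{\nu}^{*}(\alpha)$ and $F(z)\rightarrow_{\sphericalangle}F^{*}(\alpha)$ assert non-tangential convergence \emph{in the target}, i.e., that $|\Re G_{\nu}(z)-G_{\nu}^{*}(\alpha)|/|\Im G_{\nu}(z)|$ stays bounded as $z\rightarrow_{\sphericalangle}\alpha$. You establish the value of the limit but not this refinement. It follows easily from what you already have: once the angular derivative is a nonzero real number, the quotient $(G_{\nu}(z)-G_{\nu}^{*}(\alpha))/(z-\alpha)$ tends to it, so the argument of $G_{\nu}(z)-G_{\nu}^{*}(\alpha)$ tracks that of $z-\alpha$. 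The paper makes this explicit via
\[
\left|\frac{\Re G_{\nu}(z)-G_{\nu}^{*}(\alpha)}{\Im G_{\nu}(z)}\right|
\leq \frac{|z|}{\Im z}\left|\frac{G_{\nu}(z)-G_{\nu}^{*}(\alpha)}{z-\alpha}\right|\cdot\frac{\Im z}{-\Im G_{\nu}(z)},
\]
which is bounded on each cone since the middle factor tends to $|G_{\nu}'(\alpha)|$ and the last to $1/|G_{\nu}'(\alpha)|$.
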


\begin{proof} By replacing the measure $d\nu(s)$ with $d\nu(s+\alpha)$, we may  and do assume $\alpha=0$ throughout the proof. We first prove \eqref{eq:2.9}. Fatou's Lemma and the monotone convergence theorem show that \[c=\int_\mathbb{R}\frac{d\nu(s)}{s^2}\leq
\liminf_{z\to0}\int_\mathbb{R}\frac{d\nu(s)}{|z-s|^2} \leq \lim_{\epsilon \rightarrow 0^{+}}\int_\mathbb{R}\frac{d\nu(s)}{|i\epsilon-s|^2}=\int_\mathbb{R}\frac{d\nu(s)}{s^2},\] from which we obtain the first equality in \eqref{eq:2.9}. The second equality in \eqref{eq:2.9} holds if $c=+\infty$, because \[\liminf_{z\to0}\int_\mathbb{R}\frac{d\nu(s)}{|z-s|^2} \leq \lim_{z\rightarrow_{\sphericalangle}0}\int_\mathbb{R}\frac{d\nu(s)}{|z-s|^2}.\] If $c<\infty$, then for any
$s\in\mathbb{R}\setminus \{0\}$ and $z=x+iy\in\mathbb{C}^+$ with
$|x|\leq \delta y$ for some $\delta>0$, we have
\begin{align*}
\left|\frac{1}{|z-s|^2}-\frac{1}{s^2}\right|&
=\frac{1}{s^2}\frac{|2(s-x)x+x^2-y^2|}{(x-s)^2+y^2} \\
&\leq\frac{1}{s^2}\left(\frac{2\delta|x-s|y}{(x-s)^2+y^2}+
\frac{x^2+y^2}{y^2}\right)\leq\frac{1}{s^2}\left(\delta+\delta^2+1\right)\in L^{1}(\nu).
\end{align*} The dominated convergence theorem then yields
\[\lim_{z\rightarrow_{\sphericalangle}0}\int_\mathbb{R}\frac{d\nu(s)}{|z-s|^2}=
\int_\mathbb{R}\frac{d\nu(s)}{s^2},\] finishing the proof of \eqref{eq:2.9}.

Next, we prove (2). Let $c\in(0,+\infty]$ denote the common value of \eqref{eq:2.9}. In the case of $c<+\infty$, the Cauchy-Schwarz inequality implies that the map $s \mapsto1/s$ belongs to $L^{1}(\nu)$. Then the identity
\[G_\nu(i\epsilon)+\int_\mathbb{R}\frac{d\nu(s)}{s}=\int_\mathbb{R}\frac{i\epsilon}{(i\epsilon-s)s}\,d\nu(s)\] and the dominated convergence theorem yield
\[G_\nu^{*}(0)=-\int_\mathbb{R}\frac{d\nu(s)}{s}.\] Since
\[\frac{-\Im G_{\nu}(z)}{\Im z}=
\int_\mathbb{R}\frac{d\nu(s)}{|z-s|^2},\] the assumption $c<+\infty$ and \eqref{eq:2.7} imply $-G_\nu^{\prime}(0)=c$. Moreover, as $z\rightarrow_{\sphericalangle}0$, say, $|\Re z|\leq \delta \Im z$, we have \begin{align*}
\left|\frac{\Re G_{\nu}(z)-G_{\nu}^{*}(0)}{\Im G_{\nu}(z)}\right|&
=\frac{|z|}{\Im z}\left|\frac{\Re G_{\nu}(z)-G_{\nu}^{*}(0)}{z}\right|\frac{\Im z}{-\Im G_{\nu}(z)}\\
&\leq \sqrt{1+\delta^2}\left|\frac{G_{\nu}(z)-G_{\nu}^{*}(0)}{z}\right|\frac{2}{c}\\& \rightarrow 2\sqrt{1+\delta^2}.
\end{align*} Therefore, the convergence $G_\nu(z)\rightarrow G_{\nu}^{*}(0)$ must be non-tangential as $z\rightarrow_{\sphericalangle}0$. Conversely, if $G_\nu^{\prime}(0)$ exists, then we have $c=-G_\nu^{\prime}(0)< +\infty$ by \eqref{eq:2.7}. As seen earlier, the integral formula of $G_\nu^{*}(0)$ follows immediately. The proof of (2) is completed.

The proof of (3) follows from the same arguments as in the proof of (2). We omit the details.
\end{proof}

The attentive reader will notice that the non-tangential convergence results in the preceding proposition can be strengthened as follows. The proofs are left to the reader.
\begin{prop} Given a map $f:\mathbb{C}^+\rightarrow \mathbb{C}\cup\{\infty\}$, let $\{\gamma(t):t\in[0,1)\}\subset \mathbb{C}^+$ be a curve such that $\gamma(t)\rightarrow_{\sphericalangle}\alpha\in \mathbb{R}$ and $f(\gamma(t))\subset \mathbb{C}^+$ as $t\rightarrow 1^{-}$. If \[\lim_{t\rightarrow 1^{-}}\frac{f(\gamma(t))-L}{\gamma(t)-\alpha}> 0\] for some $L\in \mathbb{R}$, then $f(\gamma(t))\rightarrow_{\sphericalangle}L$ as $t\rightarrow 1^{-}$.
\end{prop}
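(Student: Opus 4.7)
The plan is to reformulate the problem in terms of arguments of complex numbers. Set $w_t = f(\gamma(t)) - L$ and $z_t = \gamma(t) - \alpha$, both lying in $\mathbb{C}^+$ by hypothesis, and let $c > 0$ denote the supposed limit of $w_t/z_t$. The target $f(\gamma(t)) \rightarrow_{\sphericalangle} L$ splits into two assertions: $w_t \to 0$ and the boundedness of $|\Re w_t|/\Im w_t$ as $t \to 1^-$. The first is immediate from $z_t \to 0$ together with the boundedness of $w_t/z_t$, so the real work consists of establishing the second.

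I would obtain it from an estimate on arguments. Because $z_t \rightarrow_{\sphericalangle} 0$, the ratio $|\Re z_t|/\Im z_t$ is bounded by some $\delta > 0$, so the principal argument $\arg z_t$ remains in the compact subinterval $[\arctan(1/\delta),\pi - \arctan(1/\delta)]$ of $(0,\pi)$. On the other hand, $w_t/z_t \to c$ forces $\arg(w_t/z_t) \to 0$, and hence $\arg w_t - \arg z_t \to 0$. Combined with the compactness of the range of $\arg z_t$, this traps $\arg w_t$ in a compact subinterval of $(0,\pi)$ for $t$ close to $1$, which translates directly to the bound $|\Re w_t|/\Im w_t = |\cot(\arg w_t)| \le M$, the non-tangential convergence assertion.

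The only mild technical nuisance is verifying that the difference $\arg w_t - \arg z_t$ really tracks the principal argument of $w_t/z_t$. Since both $w_t$ and $z_t$ lie in $\mathbb{C}^+$, the difference of their principal arguments lies in $(-\pi,\pi)$ and is therefore \emph{a} valid branch of $\arg(w_t/z_t)$; once $w_t/z_t \to c > 0$, continuity forces this branch to coincide with the principal value for $t$ near $1$. I do not foresee any deeper obstacle; given the strength of the hypothesis --- a genuine limit in $(0,\infty)$ for the angular ratio --- the conclusion is essentially a routine consequence of extracting the argument of the quotient.
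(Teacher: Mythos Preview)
Your argument is correct. The paper actually leaves this proposition as an exercise (``The proofs are left to the reader''), so there is no official proof to compare against. That said, the computation the paper carries out in the proof of Proposition~2.1(2) for the special case $f=G_\nu$ indicates the intended approach: there one bounds the non-tangential ratio directly via
\[
\left|\frac{\Re w_t}{\Im w_t}\right|
\;\le\; \frac{|z_t|}{\Im z_t}\cdot\left|\frac{w_t}{z_t}\right|\cdot\frac{\Im z_t}{\Im w_t},
\]
using that the first factor is bounded by $\sqrt{1+\delta^2}$ from the non-tangential hypothesis on $z_t$, the second factor tends to $c$, and the third factor is eventually at most $2/c$ since $\Im w_t/\Im z_t \to c$ as well. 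Your route through the principal argument --- showing $\arg w_t - \arg z_t \to 0$ and hence $\arg w_t$ stays in a compact subset of $(0,\pi)$ --- is an equally clean reformulation of the same idea and arguably makes the geometry more transparent. One small remark: your ``continuity forces'' clause is unnecessary, since $\arg w_t - \arg z_t \in (-\pi,\pi)$ automatically identifies it with the principal argument of $w_t/z_t$ whenever the latter avoids the negative real axis, which it does near $c>0$.
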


The next result describes the atoms of a measure.

\begin{cor} \label{measureatom}
Let $\nu\in \mathcal{P}_{\mathbb{R}}$ be a nondegenerate measure whose $F$-representing measure is $\rho_{\nu}$. Given $\alpha \in \mathbb{R}$, we define \[I_{1}=\int_\mathbb{R}\frac{1+s^2}{(\alpha-s)^2}\,d\rho_{\nu}(s)\quad \text{and} \quad I_{2}=\int_\mathbb{R}\frac{d\nu(s)}{(\alpha-s)^2}.\]
\begin{enumerate}[$\qquad(a)$]
\item {$\nu(\{\alpha\})>0$ if and only if $F_{\nu}^{*}(\alpha)=0$ and $I_1<+\infty$,
and in this case we have $(1+I_1)\nu(\{\alpha\})=1$.}
\item {$\rho_{\nu}(\{\alpha\})>0$ if and only if $G_{\nu}^{*}(\alpha)=0$ and $I_2<+\infty$,
in which case we have $I_2(1+\alpha^2)\rho_{\nu}(\{\alpha\})=1$.}
\end{enumerate}
\end{cor}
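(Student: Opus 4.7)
The plan is to reduce both parts to Proposition~\ref{prop2.1}, using the characterizations of atoms via non-tangential limits of the $F$- and Cauchy transforms that were recalled just before the corollary. For (a), the criterion $\nu(\{\alpha\})>0 \Longleftrightarrow F_\nu^*(\alpha)=0$ and $F_\nu'(\alpha)<+\infty$ (in which case $F_\nu'(\alpha)=1/\nu(\{\alpha\})$) combines immediately with Proposition~\ref{prop2.1}(3): applied to the Nevanlinna form of $F_\nu$ with representing measure $\rho_\nu$ and leading coefficient $b=1/\nu(\mathbb{R})=1$, that result says $F_\nu'(\alpha)<+\infty$ iff $I_1<+\infty$ and yields $F_\nu'(\alpha)=1+I_1$, whence $(1+I_1)\nu(\{\alpha\})=1$.

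For (b), the key computation is the non-tangential limit of $(z-\alpha)F_\nu(z)$. Using the alternate Nevanlinna form $F_\nu(z)=a+(1+\rho_\nu(\mathbb{R}))z-(1+z^2)G_{\rho_\nu}(z)$ together with \eqref{eq:2.8} applied to the finite measure $\rho_\nu$, one obtains the unconditional identity
\[
\lim_{z\rightarrow_{\sphericalangle}\alpha}(z-\alpha)F_\nu(z)=-(1+\alpha^2)\rho_\nu(\{\alpha\}).
\]
If $\rho_\nu(\{\alpha\})>0$, this limit is non-zero, so $F_\nu(z)\to\infty$ non-tangentially and hence $G_\nu^*(\alpha)=0$; moreover $G_\nu(z)/(z-\alpha)=1/[(z-\alpha)F_\nu(z)]$ converges non-tangentially to $-[(1+\alpha^2)\rho_\nu(\{\alpha\})]^{-1}$, so the angular derivative $G_\nu'(\alpha)$ exists with this value. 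Proposition~\ref{prop2.1}(2), applied to $\nu$ (with $d\rho(s)=d\nu(s)/(1+s^2)\in\mathcal{M}_{\mathbb{R}}$), then equates the existence of $G_\nu'(\alpha)$ with $I_2<+\infty$ and gives $G_\nu'(\alpha)=-I_2$, so $I_2(1+\alpha^2)\rho_\nu(\{\alpha\})=1$.

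The converse direction of (b) is what I would flag as the only mildly delicate step, though it amounts to bookkeeping: assuming only $G_\nu^*(\alpha)=0$ and $I_2<+\infty$, Proposition~\ref{prop2.1}(2) yields $G_\nu'(\alpha)=-I_2$, hence $(z-\alpha)F_\nu(z)=(z-\alpha)/G_\nu(z)\rightarrow -1/I_2$ non-tangentially; comparing with the unconditional limit $-(1+\alpha^2)\rho_\nu(\{\alpha\})$ obtained above forces $\rho_\nu(\{\alpha\})=1/[I_2(1+\alpha^2)]>0$. No further estimates are required because the non-tangential character of every limit involved is already built into Proposition~\ref{prop2.1}.
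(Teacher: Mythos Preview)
Your proof is correct and follows essentially the same route as the paper: part~(a) is reduced to Proposition~\ref{prop2.1}(3) and the $F$-transform criterion for atoms, and part~(b) is handled by computing the non-tangential limit of $(z-\alpha)F_\nu(z)$ via the Nevanlinna form and \eqref{eq:2.8}, then invoking Proposition~\ref{prop2.1}(2). Your explicit statement of the ``unconditional'' limit $-(1+\alpha^2)\rho_\nu(\{\alpha\})$ makes the converse in~(b) slightly more transparent than the paper's terse ``the converse follows from the same consideration,'' but the underlying argument is identical.
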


\begin{proof} Part (a) follows directly from \eqref{eq:2.8} and Proposition \ref{prop2.1} (3). We show (b). First, we write the Nevanlinna form \[F_{\nu}(z)=\Re F_{\nu}(i)+(1+\rho_{\nu}(\mathbb{R}))z-(1+z^2)G_{\rho_{\nu}}(z),\quad z \in \mathbb{C}^{+}.\] If $\rho_{\nu}(\{\alpha\})>0$, then \eqref{eq:2.8} shows that \[\lim_{z\rightarrow_{\sphericalangle}\alpha} \frac{G_{\nu}(z)}{z- \alpha}=\frac{-1}{(1+\alpha^2)\rho_{\nu}(\{\alpha\})}.\] Hence, the derivative $G_{\nu}^{\prime}(\alpha)$ exists and $G_{\nu}^{*}(\alpha)=0$. Proposition \ref{prop2.1} (2) then shows that $I_2 < +\infty$ and $(1+\alpha^2)\rho_{\nu}(\{\alpha\})=1/I_2$. The converse follows from the same consideration.
\end{proof}

\subsection{Analytic continuation} Throughout the paper, if a map $f$ defined on $\mathbb{C}^{+}$ can be extended continuously or analytically to points on $\mathbb{R}$, we will use the same notation $f$ for the extension. The following result of Greenstein [\ref{Greenstein60}] gives the conditions under which an analytic map on $\mathbb{C}^{+}$ can be continued analytically from $\mathbb{C}^{+}$ into a subset of $\mathbb{C}^{-}$ through a finite interval on $\mathbb{R}$.

\begin{prop} [Greenstein] \label{prop2.4} Let $a<b$ be two real numbers.
\begin{enumerate}
\item[\emph{(1)}] Let $\rho$ be the $F$-representing measure of an analytic map
$F:\mathbb{C}^{+}\rightarrow \mathbb{C}^{+}\cup \mathbb{R}$. The map $F$ extends analytically across the interval $(a,b)$ into $\mathbb{C}^{-}$ if and only if the restriction of $\rho$ on $(a,b)$ is absolutely continuous and has a real-analytic density $p$ on $(a,b)$. In this case the continuation of $F$ into some domain $U\subset\mathbb{C}^{-}\cup(a,b)$ is given by \[F(z)=\overline{F(\overline{z})}+2\pi i(1+z^2)p(z),\quad z\in U,\] where $p(z)$ denotes the complex-analytic extension of the real-analytic density $p$. In particular, the map $F$ continues analytically across $(a,b)$ into $\mathbb{C}^{-}$ by reflection $F(z)=\overline{F(\overline{z})}$, $z\in \mathbb{C}^{-}\cup(a,b)$, if and only if $\rho((a,b))=0$.
\item[\emph{(2)}] The Cauchy transform $G_{\nu}$ of a measure $\nu\in \mathcal{M}_{\mathbb{R}}$
extends analytically through $(a,b)$ into $\mathbb{C}^{-}$ if and only if the restriction of $\nu$ on $(a,b)$ is absolutely continuous with a real-analytic density $p_{\nu}$ on $(a,b)$. The extension of $G_{\nu}$ in this case is given by $G_{\nu}(z)=\overline{G_{\nu}(\overline{z})}-2\pi ip_{\nu}(z)$ for $z$ in some domain $U\subset\mathbb{C}^{-}\cup(a,b)$, where $p_{\nu}(z)$ is the analytic continuation of $p_{\nu}$.
\end{enumerate}
\end{prop}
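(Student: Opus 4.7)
The plan is to reduce the Nevanlinna-form statement (1) to the Cauchy-transform statement (2) and then prove (2) by combining the Stieltjes inversion formula with a Schwarz-type reflection gluing. The reduction uses the identity $F(z)=a+(b+\rho(\mathbb{R}))z-(1+z^{2})G_{\rho}(z)$ recorded in Section 2.1: since $a+(b+\rho(\mathbb{R}))z$ is entire and $1+z^{2}$ has no real zeros, $F$ continues analytically across $(a,b)$ into $\mathbb{C}^{-}$ if and only if $G_{\rho}$ does, and dividing the formula in (1) by $-(1+z^{2})$ and comparing with the formula in (2) identifies $p$ with the density in (2) applied to $\nu=\rho$. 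So (1) follows at once from (2).

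For the \emph{only if} direction of (2), suppose $G_{\nu}$ extends analytically across $(a,b)$ into some open $U\subset\mathbb{C}^{-}\cup(a,b)$. Restricting the continuation to $(a,b)$ exhibits $G_{\nu}|_{(a,b)}$ as the trace of a holomorphic function, so its imaginary part $p_{\nu}(x):=-\pi^{-1}\Im G_{\nu}(x)$ is real-analytic on $(a,b)$. Since $\Im G_{\nu}$ is continuous on any compact subinterval of $(a,b)$, the vertical limit $(-\Im G_{\nu})^{*}(x)$ is finite at every $x\in(a,b)$, so the singular support $\{s:(-\Im G_{\nu})^{*}(s)=+\infty\}$ of $\nu$ avoids $(a,b)$. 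Combined with the formula $d\nu_{\text{ac}}=\pi^{-1}(-\Im G_{\nu})^{*}\,d\lambda$ recalled in Section 2, this yields $d\nu|_{(a,b)}=p_{\nu}(x)\,dx$ with the stated real-analytic density.

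For the \emph{if} direction, assume $d\nu|_{(a,b)}=p_{\nu}(x)\,dx$ with $p_{\nu}$ real-analytic, and extend $p_{\nu}$ to a complex-analytic function $p_{\nu}(z)$ on an open neighborhood $V$ of $(a,b)$ in $\mathbb{C}$. Define the candidate $\tilde{G}(z):=\overline{G_{\nu}(\bar{z})}-2\pi i\,p_{\nu}(z)$ on $V\cap(\mathbb{C}^{-}\cup(a,b))$; this is analytic there since $\overline{G_{\nu}(\bar{z})}$ is analytic on $\mathbb{C}^{-}$ by direct reflection. The Sokhotski--Plemelj relation $G_{\nu}(x+i0^{+})=\mathrm{p.v.}\int(x-s)^{-1}\,d\nu(s)-i\pi p_{\nu}(x)$ for $x\in(a,b)$, which is Stieltjes inversion applied to the absolutely continuous piece $\nu|_{(a,b)}$ (the complement piece $\nu|_{\mathbb{R}\setminus(a,b)}$ gives a Cauchy integral already analytic across $(a,b)$), implies $\tilde{G}(x+i0^{-})=G_{\nu}(x+i0^{+})$. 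Hence the function equal to $G_{\nu}$ on $\mathbb{C}^{+}$ and to $\tilde{G}$ on $V\cap\mathbb{C}^{-}$ is continuous on $\mathbb{C}^{+}\cup V$ and analytic off $(a,b)$; by Morera's theorem applied to triangles crossing $(a,b)$, it is analytic on all of $\mathbb{C}^{+}\cup V$.

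The main obstacle is this last gluing step. The pointwise matching on $(a,b)$ is the content of Sokhotski--Plemelj, while the upgrade from continuity across $(a,b)$ to genuine analyticity there is the Morera application to a function analytic on each side of a line segment. The real-analyticity hypothesis on $p_{\nu}$ (as opposed to mere smoothness) is essential precisely here, since it is what furnishes the holomorphic extension $p_{\nu}(z)$ needed for $\tilde{G}$ to be defined on a full two-sided neighborhood of $(a,b)$.
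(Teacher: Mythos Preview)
The paper does not supply its own proof of this proposition: it is stated as Greenstein's result with a citation and used as a black box throughout. So there is no in-paper argument to compare your proposal against.

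Your argument is sound in outline and in all the essential moves. The reduction of (1) to (2) via $F(z)=a+(b+\rho(\mathbb{R}))z-(1+z^{2})G_{\rho}(z)$ is correct: since $1+z^{2}$ has no real zeros, analytic continuation of $F$ across $(a,b)$ is equivalent to that of $G_{\rho}$, and subtracting the reflected identities gives $G_{\rho}(z)-\overline{G_{\rho}(\bar z)}=-2\pi i\,p(z)$, matching the formula in (2) with $p_{\nu}=p$. For the \emph{only if} direction of (2) your use of Stieltjes inversion together with the fact that $\{s:(-\Im G_{\nu})^{*}(s)=+\infty\}$ supports $\nu_{\mathrm s}$ is exactly right. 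For the \emph{if} direction, the decomposition $\nu=\nu|_{(a,b)}+\nu|_{\mathbb{R}\setminus(a,b)}$, Sokhotski--Plemelj on the first piece (valid since real-analytic implies locally H\"older), and the observation that the second piece's Cauchy transform is already analytic across $(a,b)$ give matching continuous boundary values; Morera (equivalently Painlev\'e's continuation theorem) then finishes the gluing.

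The one place worth a sentence of extra care is the continuity up to $(a,b)$ needed for Morera: you assert it, and it does follow, but it is the H\"older regularity of $p_{\nu}$ (not just the pointwise Sokhotski--Plemelj identity) that guarantees $G_{\nu_1}$ extends continuously to $\overline{\mathbb{C}^{+}}$ locally near $(a,b)$. With that remark made explicit your proof is complete.
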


In the next result, an atom $s$ of a measure $\mu$ is said to be \emph{isolated} if there exists an open interval $I$ such that $\mu$ is atomless on $I\setminus\{s\}$.

\begin{cor} \label{analyextR2} Let $\mu\in \mathcal{M}_{\mathbb{R}}$ and $s\in\mathbb{R}$ be so that
the singular part of the restriction of $\mu$ to some open interval containing $s$ consists of at most one atom. Then the continuous part $d\mu_{\text{ac}}/d\lambda$ of $\mu$ is \emph{(}real\emph{)} analytic at $s$ if and only if one and only one of the following situations occurs:
\begin{enumerate}
\item[\emph{(1)}] $G_{\mu}$ extends analytically to an open disk centered at $s$;
\item[\emph{(2)}] $G_{\mu}$ extends analytically to a punctured disk centered at $s$ and the point $s$ is a simple pole
of this extension.
\end{enumerate} In \emph{(2)}, the point $s$ is an isolated atom of $\mu$ and $G_\mu$ has the residue $\mu(\{s\})$ at $s$.
\end{cor}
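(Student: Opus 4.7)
The plan is to apply Proposition~\ref{prop2.4}(2) after isolating any possible atom at $s$. Let $I$ be an open interval around $s$ on which the hypothesis holds. By shrinking $I$ if necessary, I may assume that the singular part of $\mu|_I$ is either zero or a single atom located at $s$ itself: indeed, if the lone allowed atom sits at some $t\neq s$, I simply exclude $t$ from $I$. Writing $c:=\mu(\{s\})\geq 0$, the measure $\tilde\mu:=\mu-c\,\delta_s$ is then purely absolutely continuous on $I$ with density coinciding with $p:=d\mu_{\text{ac}}/d\lambda$, and one has the clean identity
\[
G_\mu(z)=G_{\tilde\mu}(z)+\frac{c}{z-s}.
\]
This reduces everything to analyzing $G_{\tilde\mu}$ via Greenstein's result.

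For the forward direction, assume $p$ is real-analytic at $s$, say on a subinterval $J\subset I$. Proposition~\ref{prop2.4}(2) applied to $\tilde\mu$ provides an analytic continuation of $G_{\tilde\mu}$ through $J$ into $\mathbb{C}^-$; together with its original definition on $\mathbb{C}^+$, this assembles into an analytic function on some open disk $D$ centered at $s$. Reading the displayed identity in reverse, $G_\mu$ is itself analytic on $D$ when $c=0$, yielding alternative (1), and is analytic on $D\setminus\{s\}$ with a simple pole of residue $c$ at $s$ when $c>0$, yielding alternative (2). The two alternatives are mutually exclusive because a genuine simple pole rules out analyticity at $s$.

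For the converse, suppose $G_\mu$ satisfies (1) or (2) on some disk $D=\{|z-s|<r\}$. Formula~\eqref{eq:2.8} identifies the residue of $G_\mu$ at $s$ as $\mu(\{s\})$ (necessarily zero in case (1), positive in case (2)), so subtracting this principal part shows that $G_{\tilde\mu}$, with $\tilde\mu:=\mu-\mu(\{s\})\,\delta_s$, is analytic on all of $D$. Proposition~\ref{prop2.4}(2) applied to $\tilde\mu$ then forces $\tilde\mu$ to be absolutely continuous on $D\cap\mathbb{R}$ with real-analytic density, which in turn coincides with $p$ in a neighborhood of $s$; hence $p$ is analytic at $s$. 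In case (2), the same conclusion applied on both sides of $s$ shows that $\mu$ is absolutely continuous on $(D\cap\mathbb{R})\setminus\{s\}$, so $s$ is an isolated atom.

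I do not anticipate any serious obstacle: the argument is essentially bookkeeping around Proposition~\ref{prop2.4}(2) combined with the atom-detection formula~\eqref{eq:2.8}. The only step requiring mild care is the initial shrinking of $I$ that places the at-most-one atom at $s$ itself; once that normalization is in place, the identification of the residue and the isolatedness of $s$ in alternative (2) both drop out of the punctured-disk extension.
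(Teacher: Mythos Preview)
Your proposal is correct and follows essentially the same route as the paper: subtract the atom $\mu(\{s\})\,\delta_s$ to reduce to a measure that is absolutely continuous near $s$, then apply Proposition~\ref{prop2.4}(2) in both directions, using~\eqref{eq:2.8} to identify the residue. Your write-up is in fact slightly more explicit than the paper's in justifying the initial shrinking of $I$, the mutual exclusivity of (1) and (2), and the isolatedness of the atom in case~(2).
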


\begin{proof} Let $p=\mu(\{s\})$. The case of $p=0$ is the result of Greenstein. We shall assume $p>0$.

Suppose first that the density $d\mu_{\text{ac}}/d\lambda$ is analytic at $s$. Observe that the resection of
the finite measure $\nu=\mu-p\delta_{s}$ on some open interval containing $s$
is absolutely continuous with the density $d\nu/d\lambda=d\mu_{\text{ac}}/d\lambda$. So, by Greenstein's result, the function \[G_{\nu}(z)=G_{\mu}(z)-\frac{p}{z-s}\] extends analytically to a neighborhood of $s$. Hence, (2) occurs, and the residue of $G_{\mu}$ at $s$ is $p$ in this case.

Conversely, assume that $G_{\mu}$ extends meromorphically to $s$ as stated in (2). Then the principal part of the Laurent series of $G_{\mu}$ around $s$ is $p/(z-s)$, and the regular part $G_{\mu}(z)-p/(z-s)$ coincides with the Cauchy transform of
$\nu=\mu-p\delta_{s}$ in a neighborhood of $s$. Consequently, $G_{\nu}$ extends analytically to $s$, implying that the density $d\mu_{\text{ac}}/d\lambda=d\nu/d\lambda$ is analytic at $s$ by Proposition \ref{prop2.4}. \end{proof}

We note a useful observation on the Nevanlinna forms for which $b=0$.

\begin{cor} \label{cor2.6} Let $F(z)=a+\rho(\mathbb{R})z-(1+z^2)G_{\rho}(z)$ be a Nevanlinna form. The restriction of the measure $\rho$ to $\{s\in\mathbb{R}: |s|>R\}$ is absolutely continuous with an analytic density for some $R>0$ if and only if $F(1/z)$, $z\in \mathbb{C}^{-}$, extends analytically to a neighborhood of zero.
\end{cor}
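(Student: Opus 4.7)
The plan is to reduce the statement to a direct application of Proposition \ref{prop2.4}(2) via the M\"{o}bius-type inversion $w\mapsto 1/w$, which interchanges the behavior of $\rho$ near infinity with that of a related Cauchy transform near zero. Concretely, I introduce the pushforward $\tilde\rho$ of $\rho|_{\mathbb{R}\setminus\{0\}}$ under $s\mapsto 1/s$: this is a finite positive Borel measure on $\mathbb{R}$ with no atom at $0$, and the part of $\rho$ supported in $\{|s|>R\}$ is carried onto the part of $\tilde\rho$ supported in $(-1/R,1/R)\setminus\{0\}$. Substituting $w=1/z$ into the Nevanlinna form of $F$ and computing $\int d\rho(s)/(1-sz)$ by first peeling off the possible atom at $s=0$ and then applying the change of variable $t=1/s$ to the remainder, together with the elementary identity $t/(t-z)=1+z/(t-z)$, yields the key equality
\[
F(1/z)=a-\rho(\mathbb{R})\,z+(1+z^{2})\,G_{\tilde\rho}(z),\quad z\in\mathbb{C}^{-}.
\]

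With this formula in hand, $a-\rho(\mathbb{R})z$ is entire and $1+z^{2}$ is nonvanishing near $0$, so the analytic extension of $F(1/z)$ to a neighborhood of $0$ is equivalent to that of $G_{\tilde\rho}$. Proposition \ref{prop2.4}(2) then translates this into $\tilde\rho$ being absolutely continuous with a real-analytic density $q$ on some interval $(-\epsilon,\epsilon)$. The inverse change of variables $s=1/t$ finally rephrases this in terms of $\rho$: namely, $\rho|_{\{|s|>1/\epsilon\}}$ is absolutely continuous with analytic density $p(s)=q(1/s)/s^{2}$, where the real-analyticity of $q$ at $t=0$ encodes precisely the analyticity-at-infinity implicit in the statement of the corollary.

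I expect the main technical obstacle to be the derivation of the displayed identity: the factor $1+1/z^{2}=(1+z^{2})/z^{2}$ appearing in $(1+1/z^{2})G_{\rho}(1/z)$ introduces a pole at $z=0$ which must cancel exactly against the $\rho(\mathbb{R})/z$ term of $F(1/z)$, and the separate treatment of a possible atom at $s=0$ is what makes this cancellation clean. Once this bookkeeping is carried out, both directions of the equivalence follow directly from Greenstein's criterion applied to $G_{\tilde\rho}$ and the transparent change of variables $s=1/t$.
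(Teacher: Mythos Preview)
Your approach is correct and is essentially the same as the paper's: both derive an identity for $F(1/z)$ in terms of the pushforward measure $\tilde\rho=\rho\circ(s\mapsto 1/s)$ on $\mathbb{R}\setminus\{0\}$ and then invoke Proposition~\ref{prop2.4}. The paper writes this identity in Nevanlinna form,
\[
F(1/z)=a-\rho(\{0\})z+\int_{\mathbb{R}\setminus\{0\}}\frac{1+sz}{z-s}\,d\tilde\rho(s),
\]
and appeals to Proposition~\ref{prop2.4}(1), whereas you carry out one more algebraic step (using $1+tz=(1+z^2)+z(t-z)$) to reach the Cauchy-transform form $F(1/z)=a-\rho(\mathbb{R})z+(1+z^2)G_{\tilde\rho}(z)$ and appeal to part~(2); the two displays are identical after this rewriting, so the difference is purely cosmetic.
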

\begin{proof} Since \[F(1/z)=a-\rho(\{0\})z+\int_{\mathbb{R} \setminus \{0\}}\frac{1+sz}{z-s}\,d\rho(1/s),\quad z \in \mathbb{C}^{-},\] the result follows directly from Proposition \ref{prop2.4}.
\end{proof}

In view of the preceding results, we introduce the following definition.

\begin{pdef} \label{def2.7}
\emph{A measure $\rho\in \mathcal{M}_{\mathbb{R}}$ is
\emph{analytic} at $s\in\mathbb{R}$ if there exists an open
interval $I$ containing $s$ such that the restriction of $\rho$ to $I$ is Lebesgue absolutely continuous and admits an analytic density on $I$. The measure
$\rho$ is analytic at the point $\infty$ if there exists
a number $R>0$ so that the restriction of the measure $\rho$ to
$\mathbb{R}\setminus[-R,R]$ is absolutely continuous with an analytic density. Finally, $\rho$ is said to be
\emph{meromorphic} at $s\in\mathbb{R}$ if there exists an open
interval $I$ containing $s$ so that the restriction of $\rho$ to $I$ has no singular continuous part, $s$ is the only atom for this restriction, and the
density of this restriction is analytic at $s$.} \qed
\end{pdef} Note that all compactly supported measures are analytic at $\infty$.

\setcounter{equation}{0}
\section{Main Results and Examples}
\subsection{Global inversion and applications} Fix $a\in\mathbb{R}$, $b>0$, and a measure $\rho \in \mathcal{M}_{\mathbb{R}}\setminus\{0\}$. We consider the analytic function \begin{equation} \label{eq:3.1}
H(z)=a+bz+\int_\mathbb{R}\frac{1+sz}{z-s}\,d\rho(s),\quad z\in\mathbb{C}^+, \end{equation} which satisfies the inequality \[\Im H(x+iy)=y \left[b-\int_{\mathbb{R}}\frac{1+s^2}{(x-s)^2+y^2}\,d\rho(s)\right] \leq by, \quad x\in\mathbb{R},\,\,y>0,\] and $\Im H(z)/\Im z\rightarrow b$ as $z\rightarrow_{\sphericalangle}\infty$. The latter limit shows that the open pre-image \[\Omega=\{z\in\mathbb{C}^+:\Im H(z)>0\}\] contains $iy$ for sufficiently large $y>0$, and that the nonnegative function
\[f(x)=\inf\left\{y>0:\int_\mathbb{R}\frac{1+s^2}{(x-s)^2+y^2}\;d\rho(s)<b\right\},\quad x\in\mathbb{R},\] has a finite value at every $x\in\mathbb{R}$. We introduce the sets \[V=\{x\in\mathbb{R}:f(x)>0\}\] and $\mathbb{R}\setminus V=\{x\in\mathbb{R}:f(x)=0\}$. Recall that the singular integral transform \[g(x)=\int_\mathbb{R}\frac{1+s^2}{(x-s)^2}\,d\rho(s), \quad x\in\mathbb{R},\] takes values in $(0,+\infty]$.
Items (3) and (4) in the following result have been discovered and studied in [\ref{BelBer05}] using the Denjoy-Wolff fixed point theory.
\begin{prop}\emph{(Global Inversion).}\label{Hprop1}
\begin{enumerate} [$\qquad(1)$]
\item {The function $f:\mathbb{R}\rightarrow [0,+\infty)$ is continuous,
and the map $g:\mathbb{R}\rightarrow (0,+\infty]$ is lower semi-continuous. We have
\begin{equation} \label{Vdef}
V=\{x\in\mathbb{R}:g(x)>b\}.
\end{equation}}
\item {The open set $\Omega$ is a simply connected domain in $\mathbb{C}^{+}\cup\mathbb{R}$,
whose topological boundary $\partial\Omega$ is the graph of the function $f$, that is, \[\partial\Omega=\{x+if(x):x\in\mathbb{R}\}.\] Also, we have the characterization
\[\Omega=\{x+iy\in\mathbb{C}^+:y>f(x)\}.\]}
\item {The function $H:\Omega\to\mathbb{C}^{+}$ is an analytic bijective map,
and it extends continuously to the topological closure $\overline{\Omega}$. The extension of $H$ satisfies
\[|H(z_1)-H(z_2)|\leq2b|z_1-z_2|,\;\;\;\;\;z_1,z_2\in\overline{\Omega}.\]}
\item {There exists a continuous function
$\omega:\mathbb{C}^+\cup\mathbb{R}\to\overline{\Omega}$ such that
$\omega:\mathbb{C}^+\rightarrow \Omega$ is an analytic bijective map,
$\omega(\mathbb{R})=\partial\Omega$, $H(\omega(z))=z$ for
$z\in\mathbb{C}^+\cup\mathbb{R}$, and
$\omega(H(z))=z$ for $z\in\overline{\Omega}$. In addition, one has
\[|\omega(z_1)-\omega(z_2)|\geq\frac{|z_1-z_2|}{2b},\;\;\;\;\;z_1,z_2\in\mathbb{C}^+\cup\mathbb{R}.\]}
\item {The function $h$ defined by
$h(x)=H(x+if(x))$ for $x\in\mathbb{R}$ is a homeomorphism from
$\mathbb{R}$ onto $\mathbb{R}$. The inverse $h^{-1}$ of $h$ is given by  $h^{-1}(s)=\Re \omega (s)$, $s\in\mathbb{R}$.} Both $h$ and $h^{-1}$ are strictly increasing functions on $\mathbb{R}$.
\item {The global inverse map $\omega$ is continuous at $\infty$ in the sense that
\[\lim_{|z|\to+\infty}|\omega(z)|=+\infty.\]}
\end{enumerate}
\end{prop}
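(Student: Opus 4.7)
The plan rests on the algebraic identity $(1+sz_1)(z_2-s)-(1+sz_2)(z_1-s)=(1+s^2)(z_2-z_1)$, which upon integration against $\rho$ gives the fundamental formula
\[
H(z_1)-H(z_2)=(z_1-z_2)\Bigl[b-\int_{\mathbb{R}}\frac{1+s^2}{(z_1-s)(z_2-s)}\,d\rho(s)\Bigr].
\]
Almost every bound claimed in the proposition will be read off this formula by Cauchy--Schwarz applied to the integral on the right.

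For (1) and (2), set $F(x,y)=\int(1+s^2)/((x-s)^2+y^2)\,d\rho(s)$. Dominated and monotone convergence show that $F$ is jointly continuous on $\mathbb{R}\times(0,\infty)$ and strictly decreasing in $y$, with $F(x,y)\downarrow 0$ as $y\to\infty$ and $F(x,y)\uparrow g(x)$ as $y\downarrow 0$. The characterization $f(x)>0\iff g(x)>b$, which is (\ref{Vdef}), and the epigraph description $\Omega=\{x+iy:y>f(x)\}$ (together with $\partial\Omega=\{x+if(x):x\in\mathbb{R}\}$) are immediate from this monotonicity; lower semicontinuity of $g$ is Fatou's lemma (as in Proposition \ref{prop2.1}); continuity of $f$ is then verified separately at points where $f(x)>0$ (via strict monotonicity in $y$ plus continuity of $F$) and where $f(x)=0$ (using the strict inequality $F(x,\varepsilon)<b$ combined with continuity of $F$ in $x$). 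As an open epigraph of a continuous function, $\Omega$ is homeomorphic to an open half-plane and hence simply connected.

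For (3) and (4), applying Cauchy--Schwarz to the fundamental formula gives
\[
\Bigl|b-\tfrac{H(z_1)-H(z_2)}{z_1-z_2}\Bigr|\leq\sqrt{F(x_1,y_1)\,F(x_2,y_2)}
\]
for $z_j=x_j+iy_j\in\mathbb{C}^{+}$. On $\overline{\Omega}$ one has $F\leq b$ (with $F$ replaced by $g\leq b$ on $\partial\Omega\cap\mathbb{R}$, where Proposition \ref{prop2.1}(3) supplies the continuous boundary extension of $H$), so the difference quotient of $H$ lies in the closed disk $\overline{B}(b,b)$. This yields both the Lipschitz bound $|H(z_1)-H(z_2)|\leq 2b|z_1-z_2|$ (and hence the continuous extension of $H$ to $\overline{\Omega}$) and, via strict Cauchy--Schwarz on the open set $\Omega$, injectivity of $H$ on $\Omega$ together with non-vanishing of $H'$ there. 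Surjectivity of $H:\Omega\to\mathbb{C}^{+}$ follows from a proper-mapping argument: $H(\partial\Omega)\subset\mathbb{R}$ by the definition of $f$, and $|H(z)|\to\infty$ along any sequence leaving compact subsets of $\overline{\Omega}$ (since $H(z)/z\to b$ in a truncated cone). Thus $H$ is a proper locally biholomorphic injection into the simply connected $\mathbb{C}^{+}$, hence a biholomorphism. Defining $\omega=H^{-1}$ on $\mathbb{C}^{+}$ and extending by continuity to $\mathbb{R}$, the reverse Lipschitz bound $|\omega(z_1)-\omega(z_2)|\geq|z_1-z_2|/(2b)$ is immediate from that for $H$.

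For (5) and (6), the map $x\mapsto x+if(x)$ is a homeomorphism of $\mathbb{R}$ onto $\partial\Omega$, and $h$ is the composition of this parametrization with the boundary restriction of $H$. Because $\partial\Omega$ and $\mathbb{R}$ are both homeomorphic to $\mathbb{R}$, the biholomorphism $H:\Omega\to\mathbb{C}^{+}$ extends to a homeomorphism of the closures, which gives the homeomorphism property of $h$; strict monotonicity comes from orientation preservation, and the formula $h^{-1}(s)=\Re\omega(s)$ follows from the decomposition $\omega(s)=h^{-1}(s)+if(h^{-1}(s))$. Finally, taking $z_2=0$ in the reverse Lipschitz bound gives $|\omega(z)|\geq|z|/(2b)-|\omega(0)|\to\infty$, establishing (6). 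The main obstacle I expect is the surjectivity step in (3): the fundamental identity makes the Lipschitz bound and injectivity nearly automatic, but extending $H$ to a homeomorphism $\overline{\Omega}\to\overline{\mathbb{C}^{+}}$ and upgrading the injection to a bijection requires a careful boundary/properness discussion that matches $\partial\Omega$ with $\mathbb{R}$ and controls $H$ at infinity along $\overline{\Omega}$.
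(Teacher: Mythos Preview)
Your outline for (1), (2), and the Lipschitz/injectivity portions of (3) matches the paper closely---the fundamental difference-quotient identity together with Cauchy--Schwarz is exactly what the paper uses. Your argument for (6) via the reverse Lipschitz bound is in fact cleaner than the paper's, which proceeds by contradiction.

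The substantive divergence is surjectivity of $H:\Omega\to\mathbb{C}^+$, which you correctly flag as the main obstacle, but your proposed fix does not close the gap. A proper-mapping argument needs $|H(z_n)|\to\infty$ (or $\Im H(z_n)\to 0$) for every sequence $z_n\in\Omega$ with $|z_n|\to\infty$; the nontangential limit $H(z)/z\to b$ says nothing about sequences that escape tangentially, for instance with $\Im z_n$ bounded while $|\Re z_n|\to\infty$, and for general $\rho$ no uniform bound of the required type is available. The paper takes a different route: it first proves bijectivity for \emph{compactly supported} $\rho$ via Cauchy's argument principle---here explicit estimates such as $|b\,\Re z-\Re H(z)|\leq|a|+2L\rho(\mathbb{R})$ for $\mathrm{supp}(\rho)\subset[-L,L]$ allow one to build a large rectangular contour in $\Omega$ whose $H$-image winds exactly once around any prescribed $w\in\mathbb{C}^+$---and then handles arbitrary $\rho$ by approximating with finitely supported $\rho_n\Rightarrow\rho$, extracting a normal-families limit $\omega$ of the inverses $\omega_n$ via Montel, and verifying $H\circ\omega=\mathrm{id}$ and $\omega\circ H=\mathrm{id}$ in the limit.

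A second point you pass over is the boundary extension of $\omega$ to $\mathbb{R}$ in (4). Writing ``extend by continuity'' presupposes that boundary limits of $\omega$ exist; the reverse Lipschitz inequality rules out collapse but not oscillation. The paper obtains this extension by conjugating with the Cayley transform and applying Carath\'eodory's extension theorem, after checking that $\partial\Omega$ maps to a Jordan curve in the closed disk. Your sentence in (5) that ``the biholomorphism extends to a homeomorphism of the closures'' is precisely this theorem in disguise; you should name it and verify the Jordan-domain hypothesis explicitly.
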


The zero set of the function $f$ is characterized below. As shown in [\ref{BelBer05}], the equivalence of (1) and (5), as well as the properties (i) and (iii), can be proved alternatively using the Denjoy-Wolff analysis.
\begin{prop} \label{Hprop2}
Let $\alpha\in\mathbb{R}$. The following conditions
$(1)$-$(5)$ are equivalent: $(1)$ $\alpha\in\partial\Omega$; $(2)$ $f(\alpha)=0$; $(3)$
$g(\alpha)\in (0,b]$; $(4)$ $\Im H(\alpha+iy)>0$ for all
$y>0$; $(5)$ the angular derivative
$H^\prime(\alpha)$ exists in $[0,+\infty)$. Moreover, if
the conditions $(1)$-$(5)$ are satisfied, then we have \emph{(i)} the angular derivative of $\omega$ at $h(\alpha)$ also exists in $(0,+\infty]$ and $1/\omega^{\prime}(h(\alpha))=H^\prime(\alpha)=b-g(\alpha)$ where the case $b=g(\alpha)$ corresponds to $\omega^{\prime}(h(\alpha))=+\infty$, \emph{(ii)} the boundary value \[H(\alpha)=a+b\alpha+\int_{\mathbb{R}}\frac{1+\alpha s}{\alpha-s}\,d\rho(s),\] and \emph{(iii)} if $H^\prime(\alpha)\neq 0$ then $\partial \Omega$ is tangent to $\mathbb{R}$ at $\alpha$. If $\{\gamma(t):t\in [0,1)\}$ is a curve in $\Omega$ with $\gamma(1^{-})=\alpha$ and $H^\prime(\alpha)\neq 0$, then $H(\gamma(t)) \rightarrow_{\sphericalangle} H(\alpha)$ as $t\rightarrow 1^{-}$ if and only if $\gamma(t)\rightarrow_{\sphericalangle}\alpha$ as $t\rightarrow 1^{-}$.
\end{prop}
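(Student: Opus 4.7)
The plan is to first run through the equivalences (1)--(5), then obtain items (i)--(iii) and the curve statement via the Julia-Carath\'{e}odory theory.

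For the equivalences (1) through (4), everything follows directly from Proposition \ref{Hprop1}. The identification $\partial\Omega=\{x+if(x):x\in\mathbb{R}\}$ in Proposition \ref{Hprop1}(2) gives (1)$\Leftrightarrow$(2). The characterization $V=\{g>b\}$ from \eqref{Vdef}, combined with the observation that $g>0$ pointwise on $\mathbb{R}$ whenever $\rho\neq 0$, yields (2)$\Leftrightarrow$(3). The companion formula $\Omega=\{x+iy\in\mathbb{C}^{+}:y>f(x)\}$ makes (2)$\Leftrightarrow$(4) immediate.

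For the equivalence (3)$\Leftrightarrow$(5) and the integral formula in (ii), the natural move is to reduce $H$ to a genuine Nevanlinna form so that Proposition \ref{prop2.1}(3) applies. Although $H$ need not be a self-map of $\mathbb{C}^{+}$, the identity $sz/(s-z)=z+z^{2}/(s-z)$ shows that, for any $\beta\geq b$, the auxiliary map
\[
L(z):=\beta z-H(z)=-a+\bigl((\beta-b)+\rho(\mathbb{R})\bigr)z-(1+z^{2})G_{\rho}(z)
\]
is a Nevanlinna form with representing measure $\rho$ and linear coefficient $\beta-b\geq 0$. Proposition \ref{prop2.1}(3) applied to $L$ at $\alpha$ then gives that $L^{\prime}(\alpha)$ exists (in $[0,+\infty)$) if and only if $g(\alpha)<+\infty$, with $L^{\prime}(\alpha)=(\beta-b)+g(\alpha)$ and the stated integral formula for the vertical limit $L(\alpha)$. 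Unpacking $H=\beta z-L$ yields $H^{\prime}(\alpha)=b-g(\alpha)$ and the integral formula for $H(\alpha)$ in (ii).

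For item (i), let $w:=\omega(s)=u+iv$. The identity $\Im s=\Im H(w)=v[b-A(w)]$ with $A(w):=\int(1+t^{2})/|w-t|^{2}\,d\rho(t)$ gives $\Im\omega(s)/\Im s=1/(b-A(w))$. As $s\to h(\alpha)$, continuity of $\omega$ on $\overline{\mathbb{C}^{+}}$ forces $w\to\alpha$, and Fatou's lemma gives $\liminf A(w)\geq g(\alpha)$, hence $\liminf_{s\to h(\alpha)}\Im\omega(s)/\Im s\geq 1/H^{\prime}(\alpha)$. The reverse inequality is checked along the explicit sequence $s_{n}:=H(\alpha+i/n)$: since $\omega(s_{n})=\alpha+i/n$ and $A(\alpha+iv)\uparrow g(\alpha)$ as $v\downarrow 0$ by the monotone convergence theorem, the ratio $\Im\omega(s_{n})/\Im s_{n}=1/(b-A(\alpha+i/n))$ tends to $1/H^{\prime}(\alpha)$. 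The Julia-Carath\'{e}odory theorem then delivers $\omega^{\prime}(h(\alpha))=1/H^{\prime}(\alpha)\in(0,+\infty]$, with the case $H^{\prime}(\alpha)=0$ treated as $\omega^{\prime}(h(\alpha))=+\infty$.

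For (iii), suppose $H^{\prime}(\alpha)>0$ but, for contradiction, that there exists a sequence $x_{n}\to\alpha$ with $f(x_{n})/|x_{n}-\alpha|\geq c>0$. Then $z_{n}:=x_{n}+if(x_{n})\in\partial\Omega$ approaches $\alpha$ non-tangentially in $\mathbb{C}^{+}$, while $H(z_{n})=h(x_{n})$ stays real, so $(H(z_{n})-H(\alpha))/(z_{n}-\alpha)\to H^{\prime}(\alpha)\in\mathbb{R}\setminus\{0\}$; inspecting imaginary parts forces $f(x_{n})/(x_{n}-\alpha)\to 0$, contradicting the choice of $c$. Finally, for the curve statement, the forward direction follows because if $\gamma(t)\to_{\sphericalangle}\alpha$ then $(H(\gamma(t))-H(\alpha))/(\gamma(t)-\alpha)\to H^{\prime}(\alpha)$; since $H^{\prime}(\alpha)$ is a nonzero real, a short calculation on the real and imaginary parts shows that $|\Re H(\gamma(t))-H(\alpha)|/\Im H(\gamma(t))$ stays bounded. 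The converse is proved identically with $H$ replaced by $\omega$, using the finite positive angular derivative $\omega^{\prime}(h(\alpha))=1/H^{\prime}(\alpha)$ from (i). The main technical point I anticipate is the simultaneous bookkeeping of non-tangential convergence in $\mathbb{C}^{+}$ and in $\Omega$; Fatou's lemma and the explicit vertical sequence through $\alpha$ are precisely what allows us to sidestep a direct verification of non-tangential preservation across the two domains.
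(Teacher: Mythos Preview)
Your proposal is correct and follows essentially the same route as the paper. The treatment of (1)--(5) via Proposition~\ref{Hprop1} and the reduction to the Nevanlinna form $\beta z-H$ (the paper uses $\beta=b$) to invoke Proposition~\ref{prop2.1}(3) is exactly the paper's argument, as is the contradiction argument for (iii) using boundary points $x_n+if(x_n)$ and the angular derivative.

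The one genuine methodological difference is in (i). The paper computes $H'(\alpha)$ along the vertical segment $\alpha+i\epsilon$, rewrites the difference quotient via the global inversion $\omega(H(\cdot))=\mathrm{id}$, and then invokes Lindel\"of's theorem to upgrade the curve limit of $w\mapsto (w-h(\alpha))/(\omega(w)-\alpha)$ to a non-tangential limit. You instead work directly with the Julia--Carath\'eodory quantity $\liminf_{s\to h(\alpha)}\Im\omega(s)/\Im s$: the identity $\Im\omega(s)/\Im s=1/(b-A(\omega(s)))$ together with Fatou's lemma gives the lower bound $1/H'(\alpha)$, and the vertical sequence $s_n=H(\alpha+i/n)$ realizes it. Your route is arguably cleaner, since $\omega$ is a genuine self-map of $\mathbb{C}^+$ and Julia--Carath\'eodory applies without any fuss, whereas the paper's Lindel\"of step requires checking that the auxiliary quotient omits enough values. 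The paper's route, on the other hand, makes the inversion structure more explicit. Both are short and either is acceptable.
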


The next result is concerned with the analyticity on the boundary, in which the assertions (1) and (4) already appeared in  [\ref{BelBer05}].

\begin{prop} \label{Hprop3}
The following assertions hold.
\begin{enumerate} [$\qquad(1)$]
\item {For any $x\in V$, the function $H$ is conformal at the point $x+if(x)$ and $\omega$ extends analytically to a
neighborhood of the point $h(x)$.}
\item {The function $H$
extends analytically across any open interval $I\subset
\mathbb{R}\setminus\overline{V}$ by reflection and this extension is a conformal
mapping. Consequently, $\omega$ extends analytically through the
interval $H(I)\subset\mathbb{R}$ and the extension satisfies
$\omega(H(x))=x$ for $x\in I$.} \item {The function $\omega$ has a
complex analytic extension to a neighborhood of $s\in\mathbb{R}$ if and only
if both $h$ and $f$ are real analytic at the point
$h^{-1}(s)$.}
\item{Assume the angular derivative $H^{\prime}(\alpha)\neq 0$ at $\alpha\in \partial \Omega \cap \mathbb{R}$.
Then the function $H$ extends analytically to the point $\alpha$ if and only if the map $\omega$ extends analytically to the point $h(\alpha)$.}
\end{enumerate}
\end{prop}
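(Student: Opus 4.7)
The plan is to use the derivative formula
\[
H'(z)=b-\int_\mathbb{R}\frac{1+s^2}{(z-s)^2}\,d\rho(s),
\]
obtained from \eqref{eq:3.1} by differentiation under the integral sign, together with Greenstein's reflection criterion (Proposition \ref{prop2.4}) and the inverse function theorem. The main technical hurdle is non-vanishing of $H'$ at the relevant boundary points; once that is secured, each assertion reduces to standard analytic continuation.

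For (1), fix $x\in V$, so $z_0=x+if(x)\in\mathbb{C}^+$. Continuity in $y$ of the defining integral at the infimum $y=f(x)>0$ gives $\int(1+s^2)/|z_0-s|^2\,d\rho(s)=b$, and the triangle inequality yields $|H'(z_0)-b|\leq b$. The equality $H'(z_0)=0$ would force $\arg(z_0-s)$ to be $\rho$-a.e.\ constant; since $\Im z_0>0$ makes $s\mapsto\arg(z_0-s)$ strictly monotone on $\mathbb{R}$, this is possible only when $\rho$ is a single Dirac mass $c\delta_a$, and in that case a direct computation gives $H'(z)=b-c(1+a^2)/(z-a)^2$, which vanishes only on $\mathbb{R}$, contradicting $z_0\in\mathbb{C}^+$. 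Hence $H'(z_0)\neq 0$, and the inverse function theorem produces an analytic local inverse of $H$ near $h(x)=H(z_0)$; by uniqueness this extends $\omega$ analytically across $h(x)$.

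For (2), let $I\subset\mathbb{R}\setminus\overline{V}$ be open. Openness of $V=\{g>b\}$ and its disjointness from $I$ give $g\leq b$ on $I$. For any compact $[c,d]\subset I$, Tonelli's theorem yields
\[
\int_c^d g(x)\,dx=\int_\mathbb{R}(1+s^2)\int_c^d\frac{dx}{(x-s)^2}\,d\rho(s),
\]
and since $\int_c^d(x-s)^{-2}\,dx=+\infty$ for each $s\in(c,d)$, the finiteness of the left-hand side forces $\rho((c,d))=0$; hence $\rho(I)=0$. Greenstein's criterion then produces the analytic extension of $H$ across $I$ by reflection. On $I$ we have $H'(x)=b-g(x)$ and $g''(x)=6\int(1+s^2)/(x-s)^4\,d\rho(s)>0$, so $g$ is analytic and strictly convex on $I$; combined with $g\leq b$, strict convexity rules out any interior maximum and gives $g<b$ on $I$, so $H'>0$ and the extension is conformal. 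The inverse function theorem then extends $\omega$ analytically through $H(I)$ with $\omega(H(x))=x$ on $I$ by uniqueness of analytic continuation.

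For (3) and (4), the main tool is the parametrization $\omega(t)=h^{-1}(t)+if(h^{-1}(t))$ of $\partial\Omega$ from Proposition \ref{Hprop1}(5). In the forward direction of (3), analyticity of $\omega$ at $s$ makes $\omega|_\mathbb{R}$ real analytic; injectivity of $\omega$ on $\mathbb{C}^+$ precludes $\omega'(s)=0$ (otherwise $\omega$ would be locally $k$-to-$1$ near $s$), so $(h^{-1})'(s)\neq 0$, whence $h$ is real analytic at $x=h^{-1}(s)$ and $f=(\Im\omega)\circ h$ is real analytic at $x$. Conversely, real analyticity of $h$ at $x$ (which carries $h'(x)\neq 0$ in the context provided by (1) and (2)) and of $f$ at $x$ yields complex analytic extensions which, substituted into the parametrization, produce the analytic extension of $\omega$ near $s$. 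For (4), the forward direction is immediate from the inverse function theorem applied to $H$ at $\alpha$; for the reverse, Proposition \ref{Hprop2}(i) gives $\omega'(h(\alpha))=1/H'(\alpha)\neq 0$, so the analytic extension of $\omega$ near $h(\alpha)$ has non-vanishing derivative, and its local inverse supplies the required analytic extension of $H$ to $\alpha$.
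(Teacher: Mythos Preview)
Your proof follows essentially the same architecture as the paper's: the derivative formula for $H$, the equality case in the triangle inequality for (1), Greenstein's reflection for (2), the boundary parametrization $\omega(s)=h^{-1}(s)+if(h^{-1}(s))$ for (3), and the inverse function theorem for (4). The only substantive variation is in (2): you show $H'>0$ on $I$ via strict convexity of $g$ (since $g''>0$ and $g\le b$ on the open interval forces $g<b$), whereas the paper deduces injectivity of the reflected extension from the difference-quotient estimate $\Re[(H(z_1)-H(z_2))/(z_1-z_2)]>0$ and then infers conformality from injectivity. Your route is a bit more direct; the Tonelli argument for $\rho(I)=0$ is exactly the content of the paper's Lemma~\ref{strictdecrR}.

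One step in (3) deserves a closer look, and it is a step the paper also glosses over. From $\omega'(s)\neq 0$ you conclude $(h^{-1})'(s)=\Re\omega'(s)\neq 0$; this implication is not automatic, since a nonzero complex number can have zero real part. The missing observation is that $\Re\omega'(s)>0$: when $f(h^{-1}(s))=0$ this is Julia--Carath\'eodory (the angular derivative of a self-map of $\mathbb{C}^+$ at a real boundary value is real and positive), and when $z_0=h^{-1}(s)+if(h^{-1}(s))\in\mathbb{C}^+$ one computes directly from $\int(1+t^2)|z_0-t|^{-2}\,d\rho(t)=b$ that
\[
\Re H'(z_0)=2f(h^{-1}(s))^{2}\int_{\mathbb{R}}\frac{1+t^{2}}{|z_0-t|^{4}}\,d\rho(t)>0,
\]
whence $\Re\omega'(s)=\Re(1/H'(z_0))>0$. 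Similarly, in the converse direction your parenthetical ``which carries $h'(x)\neq 0$ in the context provided by (1) and (2)'' does not cover $x\in\partial V$; the paper's appeal to strict monotonicity of $h$ is equally incomplete (strictly increasing real-analytic functions can have vanishing derivative). Once this point is secured, both arguments go through.
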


We now apply these results to various integral forms $H$ in order to get regularity results for $\boxplus$-infinitely divisible laws and their free convolution.

\begin{exam} [Regularity of $\boxplus$-infinitely divisible laws] \label{exampleR1}
\emph{Given a nondegenerate $\mu\in\mathcal{ID}(\boxplus)$ with free L\'{e}vy-Hin\v{c}in parameters $\sigma_\mu$ and $\gamma_{\mu}$, we consider the function
\[H(z)=\gamma_{\mu}+z+\int_\mathbb{R}\frac{1+sz}{z-s}\;d\sigma_\mu(s).\] In this case we have $\omega=F_\mu$ on $\mathbb{C}^{+}$, and so $\omega$ serves as a continuous and injective extension of $F_{\mu}$ to $\mathbb{C}^{+}\cup \mathbb{R}$. The map $F_{\mu}$ is also continuous at $\infty$ in the sense of Proposition \ref{Hprop1} (6). The image \[F_\mu(\mathbb{R})=\partial \Omega=\{x+if(x):x\in\mathbb{R}\}\] is a continuous simple curve in $\mathbb{C}^{+}\cup\mathbb{R}$. Being injective, the map $F_{\mu}$ has at most one zero $s_{\mu}$ in $\mathbb{R}$. Accordingly, the set $\{s:(-\Im G_{\mu})^{*}(s)=+\infty\}$ is either the singleton set $\{s_{\mu}\}$ or the empty set. Thus, $\mu$ has a zero singularly continuous part and at most one atom, a fact that is already known in [\ref{BerVoicu93}]. The existence of $s_{\mu}$ amounts to $0\in \partial \Omega$ or, equivalently, $g(0)\leq 1$. By Proposition $\ref{Hprop2}$, this means that the angular derivative $H^{\prime}(0)$ exists and is equal to $1-g(0)$. We also have a formula for $s_{\mu}$ should it exist: \[s_{\mu}=H(F_{\mu}(s_{\mu}))=H(0)=\gamma_{\mu}-\int_{\mathbb{R}}\frac{1}{s}\,d\sigma_{\mu}(s).\] Assume that $H^{\prime}(0)$ exists in $(0,+\infty)$. Proposition \ref{Hprop2} shows that $F_{\mu}^{\prime}(s_{\mu})=1/H^{\prime}(0)$, and therefore the measure $\mu$ has an atom at $s_{\mu}$ and \[\mu(\{s_{\mu}\})=1-\int_\mathbb{R}\frac{1+s^2}{s^2}\;d\sigma_\mu(s).\]  Conversely, if $\mu(\{s_{\mu}\})>0$ then the angular derivative $H^\prime(0)$ exists and is non-zero. In summary, the following statements are equivalent:
\begin{enumerate} [$\qquad(i)$]
\item[(i)] {The zero $s_{\mu}$ exists and is an atom of $\mu$;}
\item[(ii)]
{$0\in\partial\Omega$ and the angular derivative
$H^\prime(0)>0$;}
\item[(iii)] {\[g(0)=\int_\mathbb{R}\frac{1+s^2}{s^2}\;d\sigma_\mu(s)<1.\]}
\end{enumerate} (See [Proposition 5.1, \ref{BWZ}] for a proof based on the results of [\ref{BelBer05}].) If the point $s_{\mu}$ does not exist, then $\mu$ has no atomic part. The absolutely continuous part $\mu_{\text{ac}}$ is \[d\mu_{\text{ac}}(s)=\frac{(-\Im G_{\mu})^{*}(s)}{\pi}\,d\lambda(s)=\frac{\Im F_{\mu}(s)}{\pi|F_{\mu}(s)|^2}\,d\lambda(s),\quad s \neq s_{\mu}.\] Taking the push-forward of the measures $\mu_{\text{ac}}$ and $\lambda$ by the homeomorphism $h^{-1}$, the above formula can be rewritten as \[d(\mu_{\text{ac}}\circ h)(x)=\frac{1}{\pi}\frac{f(x)}{x^2+f(x)^2}\,d(\lambda \circ h)(x), \quad x\neq 0.\] (Here we have used the inversion relationship $F_\mu(h(x))=x+if(x)$ for $x\in\mathbb{R}$.) The last formula shows first that \[\text{supp}(\mu_{\text{ac}})=h(\text{supp}(f))=h\left(\overline{V}\right)=\overline{h\left(V\right)}=\overline{\{h(x):x\in V\}}.\] In particular, by writing the open set $V$ as a countable disjoint union of open intervals, the support of $\mu$ is a countable disjoint union of closed intervals; one of which is the degenerate interval $[s_{\mu},s_{\mu}]=\{s_{\mu}\}$ if the point $s_{\mu}$ exists and is an atom lying outside of $\text{supp}(\mu_{\text{ac}})$. Secondly, denoting $D=\{h(0)\}$ if $s_{\mu}=h(0)$ exists and $D=\emptyset$ otherwise, the Radon-Nikodym derivative $d\mu_{\text{ac}}/d\lambda$ has a version $p_{\mu}$ defined by the formula:
\[p_\mu(h(x))=\frac{1}{\pi}\frac{f(x)}{x^2+f(x)^2},\quad x\in \mathbb{R}\setminus \{0\}\] and $p_\mu(h(0))=0$. The density $p_{\mu}$ is continuous on $ \mathbb{R}\setminus D$ and is positive on the open dense subset $h\left(V\right) \subset \text{supp}(\mu_{\text{ac}})$. Therefore, Proposition \ref{Hprop3} yields the analyticity of $p_{\mu}$ at any point $h(x)$ where $x \in V$. Moreover, (3.2) shows that $V=\mathbb{R}$ if and only if $g(x)>1$ for all $x\in \mathbb{R}$; in this case the density $p_\mu$ is positive and analytic everywhere on $\mathbb{R}$. Finally, since the non-existence of the point $s_{\mu}$ means
$f(0)\neq0$, we deduce from the above density formula that $p_{\mu}$
is uniformly bounded on $\mathbb{R}$ if $D=\emptyset$.}\qed
\end{exam}

We single out the conclusion about the analyticity of the density $p_{\mu}$.
\begin{prop} \label{anadensityR}
Let $\mu\in\mathcal{ID}(\boxplus)$ with L\'{e}vy measure
$\sigma_\mu\neq 0$. The measure $\mu$ is absolutely continuous with a strictly positive and analytic density everywhere on
$\mathbb{R}$ if and only if \[\int_\mathbb{R}(1+s^2)(x-s)^{-2}\;d\sigma_\mu(s)>1, \quad x\in \mathbb{R}.\]
\end{prop}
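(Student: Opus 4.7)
The proof specializes the framework of Example~\ref{exampleR1} to the auxiliary integral form
\[
H(z) = \gamma_\mu + z + \int_{\mathbb{R}} \frac{1+sz}{z-s}\,d\sigma_\mu(s), \quad z \in \mathbb{C}^+,
\]
for which $b = 1$, $\rho = \sigma_\mu$, and the global inverse $\omega$ produced by Proposition~\ref{Hprop1} coincides with $F_\mu$. Under this dictionary the integral condition of the proposition reads precisely $g(x) > 1$ for every $x \in \mathbb{R}$, which by \eqref{Vdef} is equivalent to $V = \mathbb{R}$, i.e.\ $f(x) > 0$ for every $x$. So the whole task reduces to proving that $V = \mathbb{R}$ is equivalent to $\mu$ being Lebesgue absolutely continuous with a strictly positive, analytic density throughout $\mathbb{R}$.

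The implication $V = \mathbb{R}\Rightarrow$ regularity is essentially the last sentence of Example~\ref{exampleR1}: when $V = \mathbb{R}$ we have $f(0) > 0$, so the zero $s_\mu$ does not exist and $\mu$ has no singular part; the explicit version $p_\mu(h(x)) = f(x)/[\pi(x^2 + f(x)^2)]$ from the example is strictly positive at every $x \in \mathbb{R}$, and Proposition~\ref{Hprop3}(1) extends $\omega = F_\mu$ analytically across every point $h(x)$. Since $F_\mu$ is nonvanishing on $\mathbb{R}$ in this case, $G_\mu = 1/F_\mu$ in turn continues analytically across $\mathbb{R}$, and Proposition~\ref{prop2.4}(2) then supplies the real-analyticity of $p_\mu$ at every point.

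For the converse I would argue on the Cauchy-transform side rather than try to identify the example's specific a.e.\ version of the density with the assumed analytic version pointwise. Assume $p_\mu$ is positive and real-analytic throughout $\mathbb{R}$; then Proposition~\ref{prop2.4}(2) extends $G_\mu$ analytically across every $t \in \mathbb{R}$, and Stieltjes inversion gives $\Im G_\mu^*(t) = -\pi p_\mu(t) < 0$, so $F_\mu^*(t) = 1/G_\mu^*(t)$ is finite with $\Im F_\mu^*(t) > 0$. On the other hand, Example~\ref{exampleR1} together with Proposition~\ref{Hprop1}(4) tells us that the continuous extension of $\omega = F_\mu$ to $\mathbb{R}$ satisfies $\omega(h(x)) = x + if(x)$, which gives $F_\mu^*(t) = h^{-1}(t) + if(h^{-1}(t))$. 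Comparing imaginary parts yields $f(h^{-1}(t)) > 0$ for every $t$, and since $h^{-1}\colon \mathbb{R}\to\mathbb{R}$ is a homeomorphism this forces $f > 0$ on all of $\mathbb{R}$, i.e.\ $V = \mathbb{R}$. The sole genuine obstacle is this converse step; routing it through the analytic extension of $G_\mu$ sidesteps the need to reconcile two possibly distinct pointwise versions of the density (the issue being delicate at $h(0) = s_\mu$ in the boundary case $g(0) = 1$, where the example's version is set to zero by convention).
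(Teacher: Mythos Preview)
Your argument is correct and follows the paper's approach: Proposition~\ref{anadensityR} is presented there without a separate proof, simply as the conclusion singled out from the discussion in Example~\ref{exampleR1}, and you reproduce exactly that framework. Your treatment of the converse via the analytic continuation of $G_\mu$ is in fact more detailed than what the paper spells out, and your handling of the boundary case $g(0)=1$ (where $s_\mu$ exists but carries no atom) correctly fills in a point the example leaves implicit.
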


Following the ideas in Example \ref{exampleR1}, we now address the regularity questions for free convolution with a freely infinitely divisible law. Fix $\nu\in\mathcal{P}_\mathbb{R}$ and a nondegenerate measure $\mu\in\mathcal{ID}(\boxplus)$. The appropriate $H$-function to be used in this case is the map \[H(z)=z+\varphi_\mu(F_\nu(z)),\;\;\;\;\;z\in\mathbb{C}^+.\] By the free L\'{e}vy-Hin\v{c}in formula \eqref{eq:2.3}, the composition $-\varphi_\mu \circ F_\nu:\mathbb{C}^+\rightarrow \mathbb{C}^{+}\cup\mathbb{R}$ is an analytic function satisfying $\lim_{y\rightarrow\infty}\varphi_\mu(F_\nu(iy))/iy=0$. Thus, by writing the function $-\varphi_\mu \circ F_\nu$ in its Nevanlinna form, the map $H$ admits the integral representation \eqref{eq:3.1}, where $a=-\Re[(\varphi_\mu\circ F_\nu)(i)]$ and $b=1$. Note that the representing measure $\rho$ in this case is not the zero measure, because $\mu$ is nondegenerate.

We can now apply Proposition \ref{Hprop1} to the function $H$ considered above. Our first result characterizes the zero set $\partial\Omega\cap\mathbb{R}$ in terms of representing measures and vertical limits. Any inequality below means that the involved singular integrals converge and their values satisfy the estimate.
\begin{thm} \label{boundaryR}
Let $A$ be the set of all $\alpha\in\mathbb{R}$ satisfying
$F_{\nu}^{*}(\alpha)=0$ and
\[
\left[1+\int_{\mathbb{R}}\frac{1+s^{2}}{(\alpha-s)^{2}}\,d\rho_{\nu}(s)\right]\left[\int_{\mathbb{R}}\frac{1+s^{2}}{s^{2}}\,d\sigma_{\mu}(s)\right]\leq1.
\]
Let $B$ be the set of all $\alpha\in\mathbb{R}$ satisfying $G_{\nu}^{*}(\alpha)\in\mathbb{R}\setminus\{0\}$
and
\[
\left[\int_{\mathbb{R}}\frac{d\nu(s)}{(\alpha-s)^{2}}\right]\left[\int_{\mathbb{R}}\frac{1+s^{2}}{(1-sG_{\nu}^{*}(\alpha))^{2}}\,d\sigma_{\mu}(s)\right]\leq1.
\]
Let $C$ be the set of all $\alpha\in\mathbb{R}$ satisfying $G_{\nu}^{*}(\alpha)=0$
and
\[
\left[\int_{\mathbb{R}}\frac{d\nu(s)}{(\alpha-s)^{2}}\right]\left[\int_{\mathbb{R}}1+s^{2}\,d\sigma_{\mu}(s)\right]\leq1.
\]
Then the sets $A$, $B$, and $C$ are mutually disjoint and the zero
set
\[
\partial\Omega\cap\mathbb{R}=A\cup B\cup C.
\]
Furthermore, for any $\alpha\in\partial\Omega\cap\mathbb{R}$,
the angular derivative $\omega^{\prime}(h(\alpha))$ exists in $(0,+\infty)$
if and only if a strict inequality occurs in any of the above cases; otherwise, $\omega^{\prime}(h(\alpha))=+\infty$.
\end{thm}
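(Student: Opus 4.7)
The plan is to characterize $\partial\Omega\cap\mathbb{R}$ by computing the singular integral
\[g(\alpha)=\int_{\mathbb{R}}\frac{1+s^{2}}{(\alpha-s)^{2}}\,d\rho(s)\]
arising from the Nevanlinna measure $\rho$ of the present $H$. Indeed, Proposition~\ref{Hprop2} already identifies $\partial\Omega\cap\mathbb{R}$ with $\{g\le b\}=\{g\le 1\}$ together with $H^{\prime}(\alpha)=1-g(\alpha)$ and $1/\omega^{\prime}(h(\alpha))=H^{\prime}(\alpha)$. Once $g(\alpha)$ is expressed as a product of two explicit singular integrals in $\nu$ and $\sigma_{\mu}$, the three inequalities as well as the angular-derivative statement drop out together: a strict inequality $g(\alpha)<1$ corresponds to $\omega^{\prime}(h(\alpha))\in(0,+\infty)$, while equality $g(\alpha)=1$ corresponds to $\omega^{\prime}(h(\alpha))=+\infty$.

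To extract $g(\alpha)$, I substitute the free L\'{e}vy--Hin\v{c}in formula for $\varphi_{\mu}$ into $H(z)=z+\varphi_{\mu}(F_{\nu}(z))$ and use the identity $F_{\nu}-s=(1-sG_{\nu})/G_{\nu}$ to obtain
\[\frac{\Im H(\alpha+iy)}{y}=1-\frac{-\Im G_{\nu}(\alpha+iy)}{y}\int_{\mathbb{R}}\frac{1+s^{2}}{|1-sG_{\nu}(\alpha+iy)|^{2}}\,d\sigma_{\mu}(s).\]
The Nevanlinna form shows that $\Im H(\alpha+iy)/y$ is monotonically decreasing as $y\searrow 0$ (the integrand $(1+s^{2})/((\alpha-s)^{2}+y^{2})$ is decreasing in $y^{2}$), and hence
\[g(\alpha)=\lim_{y\to 0^{+}}\frac{-\Im G_{\nu}(\alpha+iy)}{y}\int_{\mathbb{R}}\frac{1+s^{2}}{|1-sG_{\nu}(\alpha+iy)|^{2}}\,d\sigma_{\mu}(s)\]
is an \emph{honest} monotone limit, with no Julia--Carath\'{e}odory \emph{liminf} gymnastics required.

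The core of the argument is a case split according to the vertical limit $G_{\nu}^{*}(\alpha)$. In case~B, where $G_{\nu}^{*}(\alpha)\in\mathbb{R}\setminus\{0\}$, Proposition~\ref{prop2.1}(2) identifies the first factor's limit with $\int d\nu/(\alpha-s)^{2}$, and a dominated-convergence step using $|1-sG_{\nu}|\ge 1/2$ when $|sG_{\nu}|\le 1/2$ and $|1-sG_{\nu}|\ge|sG_{\nu}|/2$ when $|sG_{\nu}|\ge 2$ replaces the $\sigma_{\mu}$-integral by $\int(1+s^{2})/(1-sG_{\nu}^{*}(\alpha))^{2}\,d\sigma_{\mu}$. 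In case~A, where $F_{\nu}^{*}(\alpha)=0$, I switch back to the $F_{\nu}$-form, apply Proposition~\ref{prop2.1}(3) to get $F_{\nu}^{\prime}(\alpha)=1+\int(1+s^{2})/(\alpha-s)^{2}\,d\rho_{\nu}$ as the limit of $\Im F_{\nu}/y$, and, since Julia--Carath\'{e}odory forces $F_{\nu}(\alpha+iy)\rightarrow_{\sphericalangle}0$ whenever this angular derivative is finite, apply Proposition~\ref{prop2.1}(1) to the auxiliary measure $(1+s^{2})\,d\sigma_{\mu}(s)$ to identify the $\sigma_{\mu}$-integral as $\int(1+s^{2})/s^{2}\,d\sigma_{\mu}$. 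In case~C, where $G_{\nu}^{*}(\alpha)=0$, Proposition~\ref{prop2.1}(2) again handles the first factor, and Julia applied to the self-map $-G_{\nu}$ of $\mathbb{C}^{+}$ supplies the asymptotic $G_{\nu}(\alpha+iy)=iy\,G_{\nu}^{\prime}(\alpha)+o(y)$, so that $|1-sG_{\nu}|^{2}\to 1$ pointwise and dominated convergence replaces the $\sigma_{\mu}$-integral by $\int(1+s^{2})\,d\sigma_{\mu}$. In each case the resulting product is exactly the left-hand side of the corresponding inequality.

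Disjointness of $A,B,C$ is immediate from the mutually exclusive hypotheses on $G_{\nu}^{*}(\alpha)\in\{\infty\}\cup(\mathbb{R}\setminus\{0\})\cup\{0\}$. The reverse inclusion $\partial\Omega\cap\mathbb{R}\subseteq A\cup B\cup C$ reduces to excluding $\Im G_{\nu}^{*}(\alpha)<0$ and the non-existence of $G_{\nu}^{*}(\alpha)$ at boundary points: the first possibility would drive $\Im F_{\nu}/y\to+\infty$ with the $\sigma_{\mu}$-integral bounded below and hence $g(\alpha)=+\infty$, while non-existence is excluded by Lindel\"{o}f applied to $F_{\nu}$ along the curve $\omega(\alpha+it)$. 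The anticipated technical obstacle is the dominated-convergence step in cases~B and~C when one of the two integrals is actually $+\infty$; in those degenerate sub-cases I fall back on Fatou's lemma to conclude $g(\alpha)=+\infty$ directly, so that the theorem's product condition fails and $\alpha\notin\partial\Omega$, consistent with the stated characterization.
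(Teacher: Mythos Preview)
Your overall strategy---identify $\partial\Omega\cap\mathbb{R}$ with $\{g\le 1\}$ via Proposition~\ref{Hprop2}, then compute $g(\alpha)$ as the product of two explicit limits in $\nu$ and $\sigma_\mu$---is exactly the paper's approach, and your treatment of the direction $A\cup B\cup C\subseteq\partial\Omega\cap\mathbb{R}$ (dominated convergence in each case, using the nontangential convergence of $F_\nu$ or $G_\nu$ from Proposition~\ref{prop2.1}) is essentially correct.

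The gap is in the reverse inclusion $\partial\Omega\cap\mathbb{R}\subseteq A\cup B\cup C$. Your case split is organized according to the value of $G_\nu^{*}(\alpha)$, which presupposes that this vertical limit exists in $\mathbb{C}\cup\{\infty\}$. You then try to eliminate the non-existence case by ``Lindel\"of applied to $F_\nu$ along the curve $\omega(\alpha+it)$.'' This does not work: the curve $\omega(\alpha+it)$ terminates at $\omega(\alpha)\in\partial\Omega$, not at $\alpha$ (recall $\omega(h(\alpha))=\alpha$, not $\omega(\alpha)=\alpha$), so it gives no information about $F_\nu^{*}(\alpha)$. More fundamentally, Lindel\"of's theorem requires you to \emph{produce} a curve along which the limit exists; it cannot conjure existence of $G_\nu^{*}(\alpha)$ out of nothing.

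The paper avoids this issue by organizing the case split differently. It splits on the monotone limit $I_1=\lim_{y\to 0^+}(-\Im G_\nu(\alpha+iy))/y=\int d\nu/(\alpha-s)^2$, which always exists in $(0,+\infty]$. If $I_1<+\infty$, Proposition~\ref{prop2.1}(2) yields $G_\nu^{*}(\alpha)\in\mathbb{R}$ directly, landing you in $B\cup C$ after a Fatou step. If $I_1=+\infty$, then the bound $g(\alpha)\le 1$ forces the $\sigma_\mu$-integral to tend to~$0$; the paper then argues by contradiction that $F_\nu^{*}(\alpha)=0$ must hold: if some subsequence $G_\nu(\alpha+iy_n)$ stayed bounded with limit $z\in\mathbb{C}$, Fatou applied to $\int(1+s^2)/|1-sG_\nu(iy_n)|^2\,d\sigma_\mu\to 0$ would give $\int(1+s^2)/|1-sz|^2\,d\sigma_\mu\le 0$, which is absurd. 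This Fatou contradiction is the missing ingredient you need to replace your Lindel\"of claim.
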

Note that if the limit $F_\nu^{*}(\alpha)$ exists and is not zero, the monotone convergence theorem yields
\begin{multline*}
\int_{\mathbb{R}}\frac{d\nu(s)}{(\alpha-s)^{2}}=-\lim_{\epsilon\rightarrow0^{+}}\frac{\Im G_{\nu}(\alpha+i\epsilon)}{\epsilon}\\
=\lim_{\epsilon\rightarrow0^{+}}\frac{1}{\left|F_{\nu}(\alpha+i\epsilon)\right|^{2}}\frac{\Im F_{\nu}(\alpha+i\epsilon)}{\epsilon}=\frac{1}{\left|F_{\nu}^{*}(\alpha)\right|^{2}}\left[1+\int_{\mathbb{R}}\frac{1+s^{2}}{(\alpha-s)^{2}}\, d\rho_{\nu}(s)\right].
\end{multline*}
Combining this with Corollary \ref{measureatom}, we obtain an alternative description for the sets $A$, $B$, and $C$ as follows.
\begin{remark} \label{remarkR2}
\emph{The set $A$ consists of $\alpha\in\mathbb{R}$ satisfying
\[
\nu(\{\alpha\})+\mu(\{s_{\mu}\})\geq 1.
\]
The set $B$ consists of $\alpha\in\mathbb{R}$ satisfying $F_{\nu}^{*}(\alpha)\in\mathbb{R}\setminus\{0\}$
and
\[
\left[1+\int_{\mathbb{R}}\frac{1+s^{2}}{(\alpha-s)^{2}}\,d\rho_{\nu}(s)\right]\left[\int_{\mathbb{R}}\frac{1+s^{2}}{(F_{\nu}^{*}(\alpha)-s)^{2}}\,d\sigma_{\mu}(s)\right]\leq1.
\]
The set $C$ consists of $\alpha\in\mathbb{R}$ satisfying
\[
(1+\alpha^{2})\rho_{\nu}(\{\alpha\})\geq var(\mu).
\] Again, having a strict inequality in any of these cases means the finiteness of $\omega^{\prime}(h(\alpha))$. Note that the set $A$ is finite and the set $C$ is at most countable. Also, if $C$ is nonempty then points in $C$ are \emph{isolated} in the sense that to each $\alpha\in C$ there is an open disk $D$ such that $D\cap C=\{\alpha\}$. In the sequel, the points in $A$, $B$, and $C$ will be
called the \emph{boundary points of type} $A$, $B$, and $C$, respectively.}\qed
\end{remark}

We now discuss the regularity of $\mu\boxplus\nu$. In general, a free convolution of two nondegenerate measures does not have a singularly continuous part [\ref{Belinschi08}]. For $\mu\in\mathcal{ID}(\boxplus)$, this fact can be derived easily from the machinery we built so far.

\begin{exam} \label{exam3.8}
\emph{Recall that the set
\[
S=\left\{ s\in\mathbb{R}:(-\Im G_{\mu\boxplus\nu})^{*}(s)=+\infty\right\}
\]
supports the singular part $(\mu\boxplus\nu)_{\text{s}}$. Let $(\mu \boxplus\nu)_{\mathrm{sc}}$ denote the singularly continuous part of $\mu\boxplus\nu$. To each $s\in S$, consider $\alpha=\omega(s)\in\mathbb{C}^{+}\cup \mathbb{R}$, where $\omega$ is
the global inverse of $H(z)=z+(\varphi_{\mu}\circ F_{\nu})(z)$. It is easy to see that the map $\omega$ satisfies the subordination identity $G_{\mu\boxplus\nu}=G_{\nu}\circ \omega$ in $\mathbb{C}^{+}$. This implies that the point $\alpha$ cannot belong to $\mathbb{C}^{+}$; for if it does we would have \begin{eqnarray*}
-\Im G_{\nu}(\alpha) & = & -\lim_{\epsilon\rightarrow0^{+}}\Im\left[G_{\nu}(\omega(s+i\epsilon))\right]\\
 & = & -\lim_{\epsilon\rightarrow0^{+}}\Im G_{\mu\boxplus\nu}(s+i\epsilon)=(-\Im G_{\mu\boxplus\nu})^{*}(s)=+\infty,
\end{eqnarray*} a contradiction.
So we must have $\alpha\in\mathbb{R}$. Since \[\left|F_{\nu}(\omega(s+i\epsilon)) \right|=\frac{1}{\left|G_{\mu\boxplus\nu}(s+i\epsilon) \right|} \leq \frac{1}{-\Im G_{\mu\boxplus\nu}(s+i\epsilon)}\rightarrow 0\quad(\epsilon\rightarrow 0^+),\] the map $F_{\nu}$ has limit zero along the curve $\{\omega(s+i\epsilon):\epsilon>0\}$ ending at $\alpha$. Lindel\"{o}f theorem then implies $F^{*}_{\nu}(\alpha)=0$ and hence $\alpha$ must be a boundary point of type $A$. Since there are only finitely many type $A$ points and $\omega$ is injective, the set
$S$ must be a finite set. Because a singularly continuous measure does not charge
any finite set, we conclude that $(\mu\boxplus\nu)_{\text{sc}}(S)=0$, proving that $(\mu\boxplus\nu)_{\text{sc}}$ is in fact the zero measure.} \qed
\end{exam}

By Example \ref{exam3.8}, atoms of $\mu\boxplus\nu$ can only come from the $h$-image of type A boundary points (and hence there are only finitely many of them). According to [\ref{BerVoicu98}], an atom $a$ of $\mu\boxplus\nu$ is characterized by $a=b+c$ where $\mu(\{b\})+\nu(\{c\})>1$. On the other hand, it was shown in [\ref{BBG09}] that $F_{\mu\boxplus\nu}$ extends continuously to a function from $\mathbb{C}^{+}\cup\mathbb{R}$ to $\mathbb{C}^{+}\cup\mathbb{R}\cup\{\infty\}$, implying that the absolutely continuous part $(\mu\boxplus\nu)_{\mathrm{ac}}$ has a density that is continuous at points $s\in\mathbb{R}$ where $F_{\mu\boxplus\nu}(s)\neq 0$. Our next result focuses mainly on the quantitative nature of $(\mu\boxplus\nu)_{\mathrm{ac}}$.

\begin{thm} \label{thmR1}
Let $\mu\in\mathcal{ID}(\boxplus)$ and $\nu\in\mathcal{P}_\mathbb{R}$ be two nondegenerate measures.
\begin{enumerate} [$\qquad(1)$]
\item {We have the topological support $\mathrm{supp}((\mu\boxplus\nu)_{\mathrm{ac}})=h\left(\overline{V}\right)=\overline{\{h(x):x\in V\}}$. Moreover, the size of this support can be estimated from below in the sense that one always has $[\mathrm{supp}(\nu_{\mathrm{ac}})\cup \mathrm{supp}(\nu_{\mathrm{sc}})] \subset \overline{V}$.}
\item {The Radon-Nikodym derivative $d(\mu\boxplus\nu)_{\mathrm{ac}}/d\lambda$ has a version $p_{\mu\boxplus\nu}$ defined by \[p_{\mu\boxplus\nu}(h(x))=\frac{f(x)}{\pi}\int_\mathbb{R}\frac{d\nu(s)}{(x-s)^2
+f(x)^2},\quad x\in V,\] and $p_{\mu\boxplus\nu}(h(x))=0$ for $x\in \mathbb{R}\setminus V$. Defining $D=h\left(A\right)$ if $A\neq\emptyset$ and $D=\emptyset$ otherwise, the density $p_{\mu\boxplus\nu}$ is continuous everywhere on $\mathbb{R}$ except possibly on the finite set $D$, and it is analytic at the point $s=h(x)$ where $x\in V \cup (\mathbb{R}\setminus \overline{V})$.}
\item{The topological support of $\mu\boxplus\nu$ is a countable union of disjoint closed intervals, and only finitely many of these intervals can be degenerate. If $[s,s]=\{s\}$ is such an interval then $s=h(\alpha)$ for an unique $\alpha \in A$.}
\item {The formula
\[(\mu\boxplus\nu)(\{h(\alpha)\})=\frac{\nu(\{\alpha\})}
{\omega^{\prime}(h(\alpha))}\]
holds for all type $A$ boundary points $\alpha$ \emph{(}if there are any\emph{)},
where the angular derivative $\omega^\prime(h(\alpha))\in (0,+\infty]$ and the ratio on the right side is interpreted as $0$ in the case of
$\omega^\prime(h(\alpha))=+\infty$. In addition, if the set $A$ is not empty, then the unique zero $s_{\mu}$ of $F_{\mu}$ exists and $h(\alpha)=\alpha +s_{\mu}$ for all $\alpha \in A$.}
\end{enumerate}
\end{thm}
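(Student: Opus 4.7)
The plan rests on the global-inversion framework of Propositions 3.1--3.3, the subordination identity $G_{\mu\boxplus\nu}=G_\nu\circ\omega$ recorded in Example 3.8, and the boundary classification of Theorem 3.6. The driver for all four parts is the vertical-limit formula
\[
(-\Im G_{\mu\boxplus\nu})^{*}(h(x))=f(x)\int_{\mathbb{R}}\frac{d\nu(s)}{(x-s)^{2}+f(x)^{2}},\qquad x\in\mathbb{R},
\]
obtained by specialising the subordination to $\omega(h(x))=x+if(x)$; this equals $\pi p_{\mu\boxplus\nu}(h(x))$ in (2). It vanishes off $V$ and is strictly positive on $V$ (since $\nu\neq 0$), so the first assertion of (1) reads $\mathrm{supp}((\mu\boxplus\nu)_{\mathrm{ac}})=\overline{h(V)}=h(\overline{V})$ by the homeomorphism $h$. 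Continuity of $p_{\mu\boxplus\nu}$ away from $D=h(A)$ follows from continuity of $f,h$ and the fact that $D$ is exactly the zero set of $F_{\mu\boxplus\nu}=F_\nu\circ\omega$ on $\mathbb{R}$. For analyticity at $h(x)$ with $x\in V$, Proposition 3.3(1) extends $\omega$ analytically to a neighbourhood of $h(x)$ with values in $\mathbb{C}^{+}$, so $G_\nu\circ\omega$ is analytic there and Corollary 2.5 upgrades this to analyticity of the density (its hypothesis on the singular part holds because $\mu\boxplus\nu$ has no singular continuous part by Example 3.8 and its atoms live in the finite set $h(A)$); for $x\in\mathbb{R}\setminus\overline{V}$, $p_{\mu\boxplus\nu}$ is identically zero on the open set $h(\mathbb{R}\setminus\overline{V})$, trivially analytic.

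The delicate point in (1) is the inclusion $\mathrm{supp}(\nu_{\mathrm{ac}})\cup\mathrm{supp}(\nu_{\mathrm{sc}})\subset\overline{V}$. Given an open interval $I\subset\mathbb{R}\setminus\overline{V}$, every $\alpha\in I$ belongs to $A\cup B\cup C$ by Theorem 3.6, and $(A\cup C)\cap I$ is countable by Remark 3.7. At any type-$B$ point $\alpha\in I$ one has $F^{*}_\nu(\alpha)\in\mathbb{R}\setminus\{0\}$, so $G^{*}_\nu(\alpha)=1/F^{*}_\nu(\alpha)$ is also real, which forces $(-\Im G_\nu)^{*}(\alpha)=0$. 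Hence the Radon--Nikodym density of $\nu_{\mathrm{ac}}$ vanishes a.e.\ on $I$, giving $\nu_{\mathrm{ac}}(I)=0$. The set $\{(-\Im G_\nu)^{*}=+\infty\}$ which supports $\nu_{\mathrm{sc}}$ can meet $I$ only in the countable set $(A\cup C)\cap I$ (type-$B$ points have finite $G^{*}_\nu$), and no continuous measure charges a countable set, so $\nu_{\mathrm{sc}}(I)=0$.

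Part (3) falls out of (1) and Example 3.8: $V$ is a countable disjoint union of open intervals, so $h(\overline{V})$ decomposes into a countable disjoint union of closed intervals, and the atoms of $\mu\boxplus\nu$ live in the finite set $h(A)$, producing only finitely many degenerate intervals $\{h(\alpha)\}$, each associated with a unique $\alpha\in A$ by injectivity of $h$. For part (4), \eqref{eq:2.8} and the subordination give
\[
(\mu\boxplus\nu)(\{h(\alpha)\})=\lim_{z\to_{\sphericalangle}h(\alpha)}\frac{z-h(\alpha)}{\omega(z)-\alpha}\cdot(\omega(z)-\alpha)G_\nu(\omega(z)).
\]
When $\omega^{\prime}(h(\alpha))\in(0,+\infty)$, the Julia--Carath\'{e}odory theorem forces $\omega(z)\to_{\sphericalangle}\alpha$, so $(\omega(z)-\alpha)G_\nu(\omega(z))\to\nu(\{\alpha\})$ by \eqref{eq:2.8} and the first factor tends to $\omega^{\prime}(h(\alpha))$; the product is the stated quotient. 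When $\omega^{\prime}(h(\alpha))=+\infty$, we have $\Im\omega(z)/\Im z\to+\infty$, and combined with the universal bound $|G_\nu(w)|\leq 1/\Im w$ this yields
\[
|(z-h(\alpha))G_\nu(\omega(z))|\leq\frac{|z-h(\alpha)|}{\Im\omega(z)}\longrightarrow 0
\]
along any non-tangential approach, matching the convention $\nu(\{\alpha\})/(+\infty)=0$. Finally, the type-$A$ inequality together with $\int(1+s^{2})/(\alpha-s)^{2}\,d\rho_\nu(s)\geq 0$ forces $\int(1+s^{2})/s^{2}\,d\sigma_\mu(s)\leq 1$, which by Example 3.4 produces the unique zero $s_\mu$ of $F_\mu$; then $h(\alpha)=H(\alpha)=\alpha+\varphi_\mu(F^{*}_\nu(\alpha))=\alpha+\varphi_\mu(0)=\alpha+s_\mu$ because $F^{-1}_\mu(0)=s_\mu$.

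The hardest step is the type-$B$ analysis in part (1): one must use the explicit identity $G_\nu=1/F_\nu$ to transfer reality of $F^{*}_\nu$ into vanishing of $(-\Im G_\nu)^{*}$, and then appeal to the fact that a continuous measure charges no countable set in order to rule out singular-continuous mass on the exceptional set $(A\cup C)\cap I$. A subsidiary technical point in (4) is assigning a rigorous meaning to $\varphi_\mu(0)$, which rests on the continuous extension of $F^{-1}_\mu$ across the boundary of its natural cone, already established by the global-inversion analysis of $\mu$ in Example 3.4.
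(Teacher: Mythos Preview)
Your argument is largely correct and in two places takes a genuinely different route from the paper.

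\textbf{The support inclusion in (1).} The paper argues by contradiction: if some $x\in\mathrm{supp}(\nu_{\mathrm{ac}})\cup\mathrm{supp}(\nu_{\mathrm{sc}})$ lies in $\mathbb{R}\setminus\overline{V}$, one builds nested intervals $I_n$ with $\lambda(I_n)=\lambda(J)2^{-n}$ and $\nu(I_n)\geq\nu(J)2^{-n}$ shrinking to a point $\alpha$, and obtains $\int(\alpha-s)^{-2}\,d\nu(s)=+\infty$, contradicting $\alpha\in B\cup C$. Your approach is more direct: on any interval $I\subset\mathbb{R}\setminus\overline{V}$ one has $I\subset A\cup B\cup C$, the density $(-\Im G_\nu)^*$ vanishes at every point of $B\cup C$ (and $A$ is finite), so $\nu_{\mathrm{ac}}(I)=0$; and the set $\{(-\Im G_\nu)^*=+\infty\}$ meets $I$ only in the finite set $A\cap I$, so $\nu_{\mathrm{sc}}(I)=0$. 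Both arguments work; yours avoids the dyadic construction but leans on the full trichotomy of Theorem~3.6.

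\textbf{The atom formula in (4).} The paper computes $1/\omega'(h(\alpha))=1-I_2I_3$ from the proof of Theorem~3.6, rewrites this as $\nu(\{\alpha\})/\omega'(h(\alpha))=\nu(\{\alpha\})+\mu(\{s_\mu\})-1$ via Corollary~2.3 and Example~3.4, and then invokes the Bercovici--Voiculescu atom formula $\mu\boxplus\nu(\{a+b\})=\mu(\{a\})+\nu(\{b\})-1$. You instead compute $(z-h(\alpha))G_{\mu\boxplus\nu}(z)$ directly from subordination and Julia--Carath\'{e}odory, which is self-contained and bypasses the external atom characterisation.

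\textbf{One gap.} Your claim that continuity of $p_{\mu\boxplus\nu}$ away from $D$ ``follows from continuity of $f,h$'' is not quite complete. For $x_0\in B\cup C$ the boundary curve $x+if(x)$ approaches $x_0$ \emph{tangentially} (Proposition~3.2(iii)), so the non-tangential limit results for $G_\nu$ do not apply directly, and the product $f(x)\int d\nu/[(x-s)^2+f(x)^2]$ is an indeterminate $0\cdot\infty$. The paper handles this by invoking the continuity of $F_{\mu\boxplus\nu}$ on $\mathbb{C}^+\cup\mathbb{R}$ established in \cite{BBG09}; you should cite that result (already recalled just before Theorem~3.9) rather than appeal to continuity of $f$ alone.
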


\textbf{Remarks.} Whenever the density of $\mu\boxplus \nu$ is mentioned in this paper, we shall always mean the almost everywhere continuous and locally analytic version $p_{\mu\boxplus\nu}$ from Theorem \ref{thmR1} (2). According to the decomposability results in [\ref{BW}], the density $p_{\mu\boxplus\nu}$ cannot be constantly zero between two consecutive atoms in $D$. As for the boundedness of $p_{\mu\boxplus\nu}$, assuming $\nu$ is
nondegerate, observe that $A=\emptyset$ if and only if $\mu(\left\{ a\right\} )+\nu(\left\{ b\right\} )<1$
for all $a,b\in\mathbb{R}$. By the results in [\ref{Linfty}], the density $p_{\mu\boxplus\nu}$ is uniformly bounded on $\mathbb{R}$ if $F_{\nu}$ is continuous at $\infty$ in the sense of Proposition
\ref{Hprop1} (6). Note that $F_{\mu}$ is already continuous at $\infty$,
as seen in Example \ref{exampleR1}. The $F$-transform of a compactly supported measure also has this property. Moreover, Proposition \ref{Hprop1} (6) implies that
$F_{\mu\boxplus\nu}$ is continuous at $\infty$ for any compactly supported $\nu$. If $\mu$ has property (H) (see Section 3.2), then $F_{\mu\boxplus\nu}$ is continuous at $\infty$ and $\mu\boxplus\nu$ has unbounded support.\qed

We next investigate the connectedness of $\text{supp}(\mu\boxplus\nu)$. We write $n(S)<\infty$ to indicate that a set $S\subset \mathbb{R}$ has a finite number of connected components, and when this is the case, we use $n(S)$ again to denote the number of components in $S$. Note that $n(S)<\infty$ if and only if $n(\mathbb{R}\setminus S)<\infty$.
\begin{thm} \label{nocomponentR2}
Let $\nu\in\mathcal{P}_\mathbb{R}$ and $\mu\in\mathcal{ID}(\boxplus)$ be two nondegenerate measures. Assume that $n(\emph{supp}(\sigma_\mu))<\infty$ and $n(\emph{supp}(\nu))<\infty$. Then one has $n(\emph{supp}(\mu\boxplus\nu))<\infty$, with $n\left(\emph{supp}(\mu\boxplus\nu)\right)  \leq  n\left(\emph{supp}((\mu\boxplus\nu)_{\emph{ac}})\right)+ \emph{Cardinality}(A)$ and \[n\left(\emph{supp}((\mu\boxplus\nu)_{\emph{ac}})\right) \leq 2+n(\emph{supp}(\nu))+[1+3\,n(\emph{supp}(\sigma_{\mu}))]\,n(\mathbb{R}\setminus\emph{supp}(\nu)).\] \end{thm}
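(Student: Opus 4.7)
The plan is to bound $n(V)$ using a convexity property of $g = \psi'$ on intervals away from a ``bad set'' determined by $\mathrm{supp}(\nu)$ and $\mathrm{supp}(\sigma_\mu)$, and to handle atoms separately for the first inequality.

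The first inequality follows immediately from Example \ref{exam3.8} and Theorem \ref{thmR1} (4), which show that every atom of $\mu\boxplus\nu$ lies in the finite set $h(A)$: adjoining a finite set of size $N$ to a closed subset of $\mathbb{R}$ can increase its number of components by at most $N$. For the second inequality, Theorem \ref{thmR1} (1) together with the fact that $h\colon\mathbb{R}\to\mathbb{R}$ is a homeomorphism (Proposition \ref{Hprop1} (5)) gives $n(\mathrm{supp}((\mu\boxplus\nu)_{\mathrm{ac}})) = n(\overline V) \leq n(V)$, so it suffices to bound $n(V)$.

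The key analytic input is this. Since $\psi = -\varphi_\mu \circ F_\nu$ extends analytically by reflection from $\mathbb{C}^+$ across any open interval $I \subset \mathbb{R}\setminus [\mathrm{supp}(\nu)\cup F_\nu^{-1}(\mathrm{supp}(\sigma_\mu))]$, Proposition \ref{prop2.4} (1) gives $\rho(I) = 0$. Consequently, on such $I$, we have $g|_I(x) = \int_{\mathbb{R}\setminus I}(1+s^2)(x-s)^{-2}\,d\rho(s)$, which is a real-analytic convex function of $x$ since each integrand $(x-s)^{-2}$ is convex in $x$ for $s\notin I$. A convex function crosses the level $1$ at most twice, so $V \cap I = \{g > 1\}\cap I$ consists of at most two open pieces.

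For the global count, let $J_1,\ldots,J_m$ be the components of $\mathbb{R}\setminus\mathrm{supp}(\nu)$, with $m = n(\mathbb{R}\setminus\mathrm{supp}(\nu))$. On each $J_k$, $G_\nu$ is strictly decreasing and vanishes at most once, so $F_\nu$ has at most one pole $x_0^{(k)}\in J_k$ and is a strictly monotone homeomorphism on each half of $J_k\setminus\{x_0^{(k)}\}$; hence $F_\nu^{-1}(\mathrm{supp}(\sigma_\mu))\cap J_k$ has at most $2\,n(\mathrm{supp}(\sigma_\mu))$ components, and excising this closed set (together with $x_0^{(k)}$, if present) leaves at most $2+2\,n(\mathrm{supp}(\sigma_\mu))$ open reflection intervals on which the convexity observation applies. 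Each reflection interval contributes at most two pieces of $V$, but a piece in a reflection interval adjacent to a component $K\subset F_\nu^{-1}(\mathrm{supp}(\sigma_\mu))\cap J_k$ on which $\rho$ carries positive absolutely-continuous or atomic mass is forced to merge with the $V$-piece on the far side of $K$, because $g\equiv+\infty$ on such $K$. Tracking these mergers compresses the coarse count to the claimed $1 + 3\,n(\mathrm{supp}(\sigma_\mu))$ $V$-components per $J_k$; adding at most $n(\mathrm{supp}(\nu))$ $V$-components confined to a single component of $\mathrm{supp}(\nu)$, plus the constant $2$ absorbing the two unbounded tails of $\mathbb{R}$, yields the asserted bound. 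The main obstacle is exactly this bookkeeping—determining when the convex behavior of $g-1$ on adjacent reflection intervals forces the corresponding $V$-pieces to merge across a component of $F_\nu^{-1}(\mathrm{supp}(\sigma_\mu))$ or across $x_0^{(k)}$—and it rests on a case-by-case analysis using the monotonicity of $F_\nu$ on each half of $J_k$ and of $\varphi_\mu$ on each component of $\mathbb{R}\setminus\mathrm{supp}(\sigma_\mu)$.
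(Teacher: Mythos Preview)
Your strategy shares the paper's key analytic ingredients---that $\rho(I)=0$ whenever $H$ reflects across $I$, and that $g$ is strictly convex on such $I$---but your counting mechanism is inverted relative to the paper's, and this is where the argument becomes incomplete. The paper does not count reflection intervals and then try to bound $V$-pieces per interval. Instead it works top-down: it fixes a bounded component $I=[a_k,b_k]$ of $\overline{V}$ and shows that $I$ must \emph{trap} something countable. Either $I$ meets $\mathrm{supp}(\nu)$, in which case it is shown (using the inclusion $\mathrm{supp}(\nu_{\mathrm{ac}})\cup\mathrm{supp}(\nu_{\mathrm{sc}})\subset\overline V$ from Theorem~\ref{thmR1}) to contain an entire component of $\mathrm{supp}(\nu)$; or $I$ lies in a component $I_1$ of $\mathbb{R}\setminus\mathrm{supp}(\nu)$, in which case one proves that $F_\nu(I)$ contains an entire component of $\mathrm{supp}(\sigma_\mu)$. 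The convexity of $g$ enters only to derive a contradiction when $F_\nu(I)\cap\mathrm{supp}(\sigma_\mu)=\emptyset$ is assumed. Since $F_\nu$ is strictly monotone on $I_1$ (or on each half of $I_1\setminus\{x_0\}$ in the presence of a pole), distinct $I$'s inside the same $I_1$ yield distinct components of $\mathrm{supp}(\sigma_\mu)$, and the bound $1+3\,n(\mathrm{supp}(\sigma_\mu))$ per $I_1$ falls out immediately by crude addition over the two-or-three-way split.

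Your bottom-up route, by contrast, first produces $2(2+2n(\mathrm{supp}(\sigma_\mu)))=4+4n(\mathrm{supp}(\sigma_\mu))$ candidate $V$-pieces per $J_k$, and then must cancel the excess via mergers. The merger step as written is not sound: the claim that $g\equiv+\infty$ on a component $K\subset F_\nu^{-1}(\mathrm{supp}(\sigma_\mu))$ whenever $\rho$ has mass there is false in general---for instance, if $\rho$ restricted to $K$ is absolutely continuous with a density vanishing to order greater than $1$ at some $x\in K$, then $g(x)=\int(1+s^2)(x-s)^{-2}\,d\rho(s)<\infty$, and there is no a priori reason this value exceeds $1$. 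Even setting this aside, you never identify which components $K$ actually carry $\rho$-mass, so the merger count is not grounded. The paper's direction of argument avoids this entirely: it never needs to know where $\rho$ lives, only that $\rho$ vanishes on intervals across which $H$ reflects. If you want to rescue your approach, the cleanest fix is to abandon the merger analysis and instead argue, as the paper does, that each bounded component of $\overline V$ inside $J_k$ must have $F_\nu$-image containing a full component of $\mathrm{supp}(\sigma_\mu)$.
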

Here the cardinality of the set $A$ may be estimated by $\text{Cardinality}(A) \leq [1/c]$, where the notation $[x]$ is the largest integer not exceeding $x$ and $c=1-\mu(\{s_\mu\})$.

Since $\sigma_{\mu_t}=t\,\sigma_{\mu_{1}}$ for a $\boxplus$-semigroup $(\mu_t)_{t\geq 0}$, the next result follows from Theorem \ref{nocomponentR2} immediately.
\begin{cor}
Assume that $n(\emph{supp}(\sigma_{\mu_{1}}))<\infty$. The evolution $(\mu_t\boxplus\nu)_{t\geq 0}$ starting at $\nu\in\mathcal{P}_\mathbb{R}$ satisfies $n(\emph{supp}(\mu_t\boxplus\nu))<\infty$ for all $t\geq 0$ if and only if $n(\emph{supp}(\nu))<\infty$.
\end{cor}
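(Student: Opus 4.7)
The plan is to read this as a direct corollary of Theorem \ref{nocomponentR2} combined with the scaling $\sigma_{\mu_t}=t\,\sigma_{\mu_1}$.

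First I would handle the forward direction (assume $n(\mathrm{supp}(\nu))<\infty$ and deduce $n(\mathrm{supp}(\mu_t\boxplus\nu))<\infty$ for every $t\geq 0$). The crucial observation is that for any $t>0$, the L\'evy measure $\sigma_{\mu_t}=t\,\sigma_{\mu_1}$ shares its topological support with $\sigma_{\mu_1}$, so $n(\mathrm{supp}(\sigma_{\mu_t}))=n(\mathrm{supp}(\sigma_{\mu_1}))<\infty$ by hypothesis. If $\nu$ is nondegenerate and $t>0$ makes $\mu_t$ nondegenerate, then Theorem \ref{nocomponentR2} applies directly and delivers $n(\mathrm{supp}(\mu_t\boxplus\nu))<\infty$ together with the explicit estimate recorded there. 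The boundary cases are immediate: at $t=0$, the semigroup satisfies $\mu_0=\delta_0$, so $\mu_0\boxplus\nu=\nu$ and the finiteness is the standing assumption; similarly if $\nu=\delta_c$ is degenerate, then $\mu_t\boxplus\nu$ is a translate of $\mu_t$ whose support has the same number of components as $\mathrm{supp}(\mu_t)$, and that number is controlled by $n(\mathrm{supp}(\sigma_{\mu_1}))$ via Example \ref{exampleR1} (applied with $\rho=t\sigma_{\mu_1}$, so that the open set $V$ occurring there has a finite number of intervals in its decomposition).

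For the converse, assume $n(\mathrm{supp}(\mu_t\boxplus\nu))<\infty$ for all $t\geq 0$. Evaluating at $t=0$ gives $\mu_0\boxplus\nu=\delta_0\boxplus\nu=\nu$, so $n(\mathrm{supp}(\nu))<\infty$ is immediate.

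No serious obstacle is expected here, since both directions are consequences of Theorem \ref{nocomponentR2} together with the elementary observation $\mathrm{supp}(\sigma_{\mu_t})=\mathrm{supp}(\sigma_{\mu_1})$ for $t>0$. The only mildly delicate point is covering the degenerate endpoints ($t=0$ and the possibility that $\nu$ or $\mu_t$ is a point mass), which falls outside the hypotheses of Theorem \ref{nocomponentR2} but is handled trivially since $\delta_a\boxplus\eta$ is a translate of $\eta$.
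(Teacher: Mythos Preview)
Your proposal is correct and matches the paper's approach: the paper simply notes that $\sigma_{\mu_t}=t\,\sigma_{\mu_1}$ and declares the corollary an immediate consequence of Theorem \ref{nocomponentR2}. Your treatment is in fact more careful than the paper's, since you explicitly handle the degenerate endpoints ($t=0$ and $\nu$ a point mass) and spell out the converse via evaluation at $t=0$, whereas the paper leaves these implicit.
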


\textbf{Remarks.} The assumption $n(\text{supp}(\sigma_\mu))<\infty$ does imply $n(\text{supp}(\mu))<\infty$. This is obvious if $\mu$ is degenerate. We consider a nondegenerate $\mu$ and the set $V$ from Example \ref{exampleR1}. To prove $n(\text{supp}(\mu))<\infty$, it suffices to show that
$n(\mathbb{R}\setminus \overline{V})<\infty$. To this end, let $I$ be a component in the open set $\mathbb{R}\setminus\overline{V}$. Then
$g(x)=\int (1+s^2)(x-s)^{-2}\,d\sigma_\mu(s)\leq1$ for any
$x\in I$ by Proposition \ref{Hprop2}. Lemma
\ref{strictdecrR} in Section 4 shows further that $\sigma_\mu(I)=0$, and hence one has $I\subset
J$ for some component $J$ in the open set
$\mathbb{R}\setminus\mathrm{supp}(\sigma_\mu)$. Note that $I$ is the only component of
$\mathbb{R}\setminus\overline{V}$ that is contained in $J$. For if this is not the case then $g(x)>1$ at some point $x\in J$, and hence $g$ will have a local maximum in $J$. This, however, is not possible because $g$ is $C^2$ and strictly convex throughout $J$. Thus, the totality of these components $I$ is finite and \[n(\mathbb{R}\setminus\mathrm{supp}(\mu_{\text{ac}}))=n(\mathbb{R}\setminus\overline{V})\leq n(\mathbb{R}\setminus\mathrm{supp}(\sigma_\mu)).\]\qed

\subsection{Property (H)}
Recall that a measure $\mu\in \mathcal{P}_{\mathbb{R}}$ has
property (H) if for all $\nu\in \mathcal{P}_{\mathbb{R}}$, the measure $\mu\boxplus\nu$ is absolutely continuous with a positive and analytic density everywhere on $\mathbb{R}$. We first present an example to illustrate the main idea of our approach.
\begin{exam}[Boolean stable laws]  \label{exampleR}
\emph{Let $\mu\in\mathcal{ID}(\boxplus)$ be a Boolean stable law in the sense that for any constant $c>0$, there exist constants $c_1>0$ and $c_2\in\mathbb{R}$ such that \[F_{\mu}(z)+cF_{\mu}(z/c)=z+c_1F_{\mu}((z-c_2)/c_1),\quad z\in\mathbb{C}^{+}.\] It
was shown in [\ref{AriHasebe14}] that the $F$-transform of $\mu$ has the form
\[F_\mu(z)=z+e^{i\pi ab}z^{1-a},\quad z\in\mathbb{C}^{+},\] where (i)
$0<a\leq 1/2$ and $0\leq b \leq 1$, (ii)
$1/2 <a\leq 2/3$ and
$2a-1\leq ab \leq1-a$, or (iii) $a=1$ and
$b=1/2$. (The principal branch of the power function is used here.) The measure $\mu$
in the last case the Cauchy distribution, and it is known that $\mu\boxplus \nu=\mu * \nu$ for all $\nu\in \mathcal{P}_{\mathbb{R}}$, which implies $p_{\mu\boxplus\nu}>0$ on $\mathbb{R}$. In case
(i) with $b\neq0,1$, the continuity of $F_{\mu}$ on $\mathbb{C}^{+}\cup\mathbb{R}$ yields $F_\mu(0)=\lim_{x\rightarrow 0^+}(x+e^{i\pi ab}x^{1-a})=0$, $F_\mu(\mathbb{R}\setminus\{0\})\subset\mathbb{C}^+$, and the angular derivative $F_\mu^\prime(0)=+\infty$ by the explicit formula of $F_\mu$. It follows from Example \ref{exampleR1} that $\mu$ is atomless and \[\int_\mathbb{R}(1+s^2)(x-s)^{-2}\,d\sigma_\mu(s)>1, \quad x\in \mathbb{R}\setminus\{0\}.\] The last inequality says that the set $B=\emptyset$.
Since $\mu(\{0\})=0$, we have $A=\emptyset$. Note that
$y\Im[F_\mu(iy)-iy]=y\Im [e^{i\pi ab}(iy)^{1-a}]\to+\infty$ as $y\to+\infty$. After writing $F_\mu$ in its Nevanlinna form with the representing measure $\rho_{\mu}$, an application of the monotone convergence theorem to the integral in the limit shows that $m_{2}(\rho_{\mu})=+\infty$, which means that $m_{2}(\mu)=+\infty$. Thus, the set $C$ is also empty. Theorem \ref{boundaryR} then implies
the zero set $\partial \Omega \cap \mathbb{R}=A\cup B \cup C=\emptyset$ (i.e., $V=\mathbb{R}$) for any nondegenerate $\nu$, showing that $p_{\mu\boxplus\nu}>0$ on $\mathbb{R}$. A similar
argument shows that the same conclusion holds for all $a$ and $b$ in case (ii). However, the density of
$\mu$ considered above is positive and analytic
everywhere except at the origin.}\qed
\end{exam}

The preceding example shows that the conditions for property (H) are hidden in the inequalities of Theorem \ref{boundaryR} and Remark \ref{remarkR2}. In fact, we have:
\begin{thm} \label{propertyH1}
Let $\mu\in\mathcal{ID}(\boxplus)$ with L\'{e}vy measure
$\sigma_\mu \neq 0$. Then the density $p_{\mu\boxplus\nu}$ is positive and analytic everywhere on $\mathbb{R}$ for every $\nu\in\mathcal{P}_{\mathbb{R}}$ if and
only if
\[
-\lim_{\epsilon\rightarrow0^+}\frac{\Im\varphi_\mu(x+i\epsilon)}{\epsilon}=\int_\mathbb{R}\frac{1+s^2}{(x-s)^2}\,d\sigma_{\mu}(s)>1,\quad x\in\mathbb{R},
\] and
\[
-\lim_{y\rightarrow\infty}y\Im\varphi_\mu(iy)=\int_\mathbb{R}1+s^2\,d\sigma_\mu(s)=+\infty.
\]
\end{thm}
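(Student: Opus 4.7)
The plan is to reduce positivity and analyticity of $p_{\mu\boxplus\nu}$ to the vanishing of the boundary set $\partial\Omega\cap\mathbb{R}=A\cup B\cup C$ supplied by Theorem \ref{boundaryR} and Remark \ref{remarkR2}. By Theorem \ref{thmR1} (2), the density $p_{\mu\boxplus\nu}$ vanishes on $h(\mathbb{R}\setminus V)$ and is strictly positive and analytic on $h(V)$, and since $h:\mathbb{R}\to\mathbb{R}$ is a homeomorphism, $p_{\mu\boxplus\nu}$ is positive and analytic everywhere on $\mathbb{R}$ if and only if $V=\mathbb{R}$, equivalently $A\cup B\cup C=\emptyset$, for every $\nu\in\mathcal{P}_\mathbb{R}$.

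For sufficiency, assume both integral conditions hold. Degenerate $\nu=\delta_a$ makes $\mu\boxplus\nu$ a translate of $\mu$, so Proposition \ref{anadensityR} combined with the first condition gives the claim directly. For nondegenerate $\nu$ I verify each of $A$, $B$, $C$ is empty. Using the first condition at $x=0$ and Example \ref{exampleR1}, $\mu$ carries no atom, so any $\alpha\in A$ would demand the impossible $\nu(\{\alpha\})\geq 1$. For any $\alpha$ with $F_\nu^*(\alpha)\in\mathbb{R}\setminus\{0\}$, the first condition evaluated at $x=F_\nu^*(\alpha)$ yields $\int(1+s^2)(F_\nu^*(\alpha)-s)^{-2}\,d\sigma_\mu>1$; combined with $1+\int (1+s^2)(\alpha-s)^{-2}\,d\rho_\nu\geq 1$, this contradicts the defining $\leq 1$ inequality for $B$. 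Finally, the second condition gives $var(\mu)=\sigma_\mu(\mathbb{R})+m_2(\sigma_\mu)=+\infty$, while $(1+\alpha^2)\rho_\nu(\{\alpha\})$ is always finite, excluding any $\alpha\in C$.

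For necessity I run the reverse constructions. Taking $\nu=\delta_0$ gives $\mu\boxplus\nu=\mu$, whose density is positive and analytic everywhere by hypothesis, so Proposition \ref{anadensityR} produces the first condition. For the second condition I argue by contradiction: suppose $var(\mu)<+\infty$ and consider the nondegenerate symmetric measure $\nu=\tfrac{1}{2}(\delta_a+\delta_{-a})$ with $a>0$. A direct computation gives $G_\nu(z)=z/(z^2-a^2)$ and $F_\nu(z)=z-a^2/z$, from which reading off the Nevanlinna form yields $\rho_\nu=a^2\delta_0$; one then checks $G_\nu^*(0)=0$ and $\int d\nu(s)/(0-s)^{2}=1/a^{2}$. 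Choosing $a^2\geq var(\mu)$ places $0$ in $C$, hence $V\neq\mathbb{R}$ and $p_{\mu\boxplus\nu}$ vanishes at $h(0)$, contradicting the hypothesized positivity.

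The main obstacle lies in the necessity direction: one has to produce an explicit $\nu$ that deposits a specific point into $C$ rather than $A$ or $B$, which requires a careful evaluation of the vertical limits $G_\nu^*(0)$ and $F_\nu^*(0)$ and identification of the Nevanlinna $F$-representing measure $\rho_\nu$. By contrast, once the reduction to $A\cup B\cup C=\emptyset$ is in place, the sufficiency direction is essentially a tally of the three inequalities in Remark \ref{remarkR2} against the two hypotheses.
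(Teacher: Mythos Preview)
Your proof is correct and follows essentially the same route as the paper: reduce the question to $A\cup B\cup C=\emptyset$ via Theorem \ref{boundaryR}/Remark \ref{remarkR2} and Theorem \ref{thmR1}, handle sufficiency by checking each type against the two hypotheses, and obtain necessity from $\nu=\delta_0$ plus a tailored test measure forcing a type $C$ point. The only difference is cosmetic: for the contradiction the paper takes $\rho_\nu=var(\mu)\,\delta_1$ (i.e., $F_\nu(z)=z+var(\mu)(1+z)/(1-z)$) instead of your symmetric Bernoulli with $\rho_\nu=a^2\delta_0$, but both constructions achieve the same $(1+\alpha^2)\rho_\nu(\{\alpha\})\geq var(\mu)$ criterion from Remark \ref{remarkR2}.
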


Proposition \ref{prop2.4} implies a qualitative version of the preceding result.

\begin{cor} \label{propertyH2}
Given a nondegenerate measure $\mu\in\mathcal{ID}(\boxplus)$, the following statements are equivalent:
\begin{enumerate} [$\qquad(1)$]
\item {The measure $\mu$ has property $(\mathrm{H})$.}
\item {The measure $\mu$ has a positive density everywhere on
$\mathbb{R}$ and $m_2(\mu)=+\infty$.}
\item {The function $G_\mu$
extends analytically to $\mathbb{R}$, $\Im G_\mu(x)<0$ for all
$x\in\mathbb{R}$, and $m_2(\mu)=+\infty$.}
\item {The function $F_\mu$
extends analytically to $\mathbb{R}$, $\Im F_\mu(x)>0$ for all
$x\in\mathbb{R}$, and $m_2(\rho_{\mu})=+\infty$.}
\end{enumerate}Moreover, if $\mu$ has property $(\mathrm{H})$, then
$\mu\boxplus\nu$ also has property \emph{(H)} and $m_2(\mu\boxplus\nu)=+\infty$ for every $\nu\in\mathcal{P}_\mathbb{R}$.
\end{cor}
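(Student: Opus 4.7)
The plan is to close the cycle (1)$\Rightarrow$(2)$\Rightarrow$(3)$\Rightarrow$(4)$\Rightarrow$(1), leveraging Theorem \ref{propertyH1}, Proposition \ref{anadensityR}, Proposition \ref{prop2.4}(2), and the moment equivalences $m_2(\mu)=+\infty\Leftrightarrow m_2(\sigma_\mu)=+\infty\Leftrightarrow m_2(\rho_\mu)=+\infty$ recorded in Section 2.2. For (1)$\Rightarrow$(2), applying property (H) with $\nu=\delta_0$ yields that $\mu$ itself has a positive analytic density, and Theorem \ref{propertyH1} forces $\int(1+s^2)\,d\sigma_\mu=+\infty$; since $\sigma_\mu\in\mathcal{M}_\mathbb{R}$ is finite, this gives $m_2(\sigma_\mu)=+\infty$, hence $m_2(\mu)=+\infty$. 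The observation driving the reverse direction is that, for a nondegenerate $\mu\in\mathcal{ID}(\boxplus)$, the density formula $p_\mu(h(x))=f(x)/(\pi(x^2+f(x)^2))$ of Example \ref{exampleR1} combined with $h$ being a homeomorphism of $\mathbb{R}$ shows that positivity of $p_\mu$ everywhere on $\mathbb{R}$ is equivalent to $f>0$ on $\mathbb{R}$, which by \eqref{Vdef} amounts to $g(x)>1$ for every $x$. Once this is paired with $\int(1+s^2)\,d\sigma_\mu=+\infty$, Theorem \ref{propertyH1} delivers property (H), closing the loop.

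For (2)$\Leftrightarrow$(3), I invoke Proposition \ref{prop2.4}(2): an analytic continuation of $G_\mu$ across an interval exists if and only if $\mu$ has a real-analytic density there, and the reflection formula $G_\mu(z)=\overline{G_\mu(\overline z)}-2\pi i\,p_\mu(z)$ restricted to $\mathbb{R}$ gives $\Im G_\mu(x)=-\pi p_\mu(x)$. Hence $G_\mu$ extends analytically to all of $\mathbb{R}$ with $\Im G_\mu<0$ everywhere iff $\mu$ is absolutely continuous with a strictly positive analytic density throughout $\mathbb{R}$. For (3)$\Leftrightarrow$(4), $\Im G_\mu<0$ precludes real zeros of $G_\mu$, so $F_\mu=1/G_\mu$ extends analytically across $\mathbb{R}$ with $\Im F_\mu=-\Im G_\mu/|G_\mu|^2>0$; the converse is symmetric because $\Im F_\mu>0$ likewise precludes real zeros of $F_\mu$. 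The moment conditions on $\mu$ and on $\rho_\mu$ are interchangeable by Section 2.2.

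For the moreover statement, property (H) is inherited by $\mu\boxplus\nu$ by associativity: for any $\nu'\in\mathcal{P}_\mathbb{R}$, $(\mu\boxplus\nu)\boxplus\nu'=\mu\boxplus(\nu\boxplus\nu')$ is absolutely continuous with positive analytic density everywhere by property (H) of $\mu$. For $m_2(\mu\boxplus\nu)=+\infty$, I would use the additivity $\varphi_{\mu\boxplus\nu}=\varphi_\mu+\varphi_\nu$ evaluated at $w=F_{\mu\boxplus\nu}(iy)$ to obtain
\[
y\bigl(\Im F_{\mu\boxplus\nu}(iy)-y\bigr)=-y\,\Im\varphi_\mu(F_{\mu\boxplus\nu}(iy))-y\,\Im\varphi_\nu(F_{\mu\boxplus\nu}(iy)),
\]
with both summands on the right nonnegative since $\Im\varphi_\eta\leq 0$ on $\mathbb{C}^+$ for any $\eta\in\mathcal{P}_\mathbb{R}$. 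The free L\'{e}vy-Hin\v{c}in form yields $-\Im\varphi_\mu(w)=\Im w\cdot\int(1+s^2)/|w-s|^2\,d\sigma_\mu(s)$, and the probability-measure asymptotic $F_{\mu\boxplus\nu}(iy)/iy\to 1$ combined with Fatou's lemma bounds the first summand below by $\int(1+s^2)\,d\sigma_\mu(s)=+\infty$. On the other hand, a direct monotone-convergence calculation from the Nevanlinna form of $F_{\mu\boxplus\nu}$ identifies $\lim_{y\to\infty}y(\Im F_{\mu\boxplus\nu}(iy)-y)=\int(1+s^2)\,d\rho_{\mu\boxplus\nu}=\mathrm{var}(\mu\boxplus\nu)$, so $\mathrm{var}(\mu\boxplus\nu)=+\infty$, and Section 2.2 converts this to $m_2(\mu\boxplus\nu)=+\infty$.

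The principal obstacle is the moment portion of the moreover statement: since $\mu\boxplus\nu$ is generally not in $\mathcal{ID}(\boxplus)$, the equivalences (1)--(4) do not apply directly to it, and the subordination-plus-Fatou argument serves as a stand-in for the general fact that variance is $\boxplus$-additive, while remaining within the machinery developed in the paper.
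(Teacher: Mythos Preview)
Your proposal is correct and follows the route the paper indicates (the paper offers no proof beyond the remark that Proposition~\ref{prop2.4} yields this qualitative version of Theorem~\ref{propertyH1}); your use of Example~\ref{exampleR1} to pass from ``positive density everywhere'' to $V=\mathbb{R}$ and hence to analyticity is exactly the bridge needed for $(2)\Rightarrow(3)$, and your explicit argument for $m_2(\mu\boxplus\nu)=+\infty$ via $\varphi_{\mu\boxplus\nu}=\varphi_\mu+\varphi_\nu$ and Fatou is a clean way to handle the one step where the equivalences $(1)$--$(4)$ do not directly apply because $\mu\boxplus\nu\notin\mathcal{ID}(\boxplus)$.
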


The next result follows easily from the fact $\sigma_{\mu_t}=t\,\sigma_\mu$.
\begin{cor} \label{propertyH3}
A $\boxplus$-semigroup $\left(\mu_t\right)_{t\geq 0}$ with L\'{e}vy measure $\sigma_{\mu_1}$ has property $(\mathrm{H})$ if
and only if $m_2(\sigma_{\mu_1})=+\infty$ and \[\int_\mathbb{R}\frac{1+s^2}{(x-s)^2}\,d\sigma_{\mu_1}(s)=+\infty,\quad x\in\mathbb{R}.\]
\end{cor}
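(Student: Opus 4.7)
The plan is to apply Theorem~\ref{propertyH1} to each marginal distribution $\mu_{t}$ separately, and then exploit the homogeneity $\sigma_{\mu_{t}}=t\,\sigma_{\mu_{1}}$ of the L\'{e}vy measure along the semigroup to collapse the resulting $t$-indexed family of conditions into a single pair of conditions on $\sigma_{\mu_{1}}$.

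By definition the semigroup $(\mu_{t})_{t\geq 0}$ has property $(\mathrm{H})$ precisely when every marginal $\mu_{t}$ with $t>0$ does. For each such $t$, Theorem~\ref{propertyH1} characterizes this by
\[
m_{2}(\sigma_{\mu_{t}})=+\infty \quad\text{and}\quad \int_{\mathbb{R}}\frac{1+s^{2}}{(x-s)^{2}}\,d\sigma_{\mu_{t}}(s)>1 \text{ for every } x\in\mathbb{R}.
\]
Substituting $\sigma_{\mu_{t}}=t\,\sigma_{\mu_{1}}$, the first requirement reads $t\,m_{2}(\sigma_{\mu_{1}})=+\infty$, which is independent of $t$ and equivalent to $m_{2}(\sigma_{\mu_{1}})=+\infty$; the second becomes
\[
t\int_{\mathbb{R}}\frac{1+s^{2}}{(x-s)^{2}}\,d\sigma_{\mu_{1}}(s)>1 \quad \text{for every } x\in\mathbb{R}\text{ and every } t>0.
\]

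The only step requiring comment is a short quantifier shift: since this last inequality must hold for \emph{all} $t>0$, letting $t\to 0^{+}$ at a fixed $x$ forces $\int_{\mathbb{R}}(1+s^{2})(x-s)^{-2}\,d\sigma_{\mu_{1}}(s)=+\infty$ at every $x\in\mathbb{R}$, which is exactly the integral condition in the statement; conversely, once that integral is everywhere $+\infty$, multiplication by any $t>0$ preserves the strict inequality automatically. There is no real obstacle beyond this observation, so the corollary is a direct consequence of Theorem~\ref{propertyH1} combined with the scaling $\sigma_{\mu_{t}}=t\,\sigma_{\mu_{1}}$.
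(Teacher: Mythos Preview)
Your argument is correct and matches the paper's approach exactly: the paper simply remarks that the corollary ``follows easily from the fact $\sigma_{\mu_t}=t\,\sigma_{\mu}$'' without spelling out the details. Your explicit quantifier argument (letting $t\to 0^{+}$ to force the integral to be $+\infty$) is precisely the computation the paper leaves to the reader.
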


\begin{exam} [Property (H) in domains of attraction]
\emph{Interestingly, all $\boxplus$-stable processes except the asymmetric ones and the free Brownian motion have property (H). This is because the Voiculescu transform $\varphi_{\nu_{a,\theta}}$ of a freely stable law $\nu_{a,\theta}$ is parameterized by its stability index $a\in (0,2]$ and the asymmetric coefficient $\theta \in [-1,1]$, and such an $\varphi_{\nu_{a,\theta}}$ must belong to the following list up to dilation and translation [\ref{BerPata99}]:
\begin{enumerate} [$\qquad(1)$]
\item[(i)] {$\varphi_{\nu_{a,\theta}}(z)=-\left[i+\theta \tan(a \pi/2)\right]i^{a-1}z^{1-a}$ when
$a\in (0,1)\cup (1,2)$;}
\item[(ii)]
{$\varphi_{\nu_{a,\theta}}(z)=2 \theta \log z -i \pi (1+\theta)$ when $a=1$;}
\item[(iii)] {$\varphi_{\nu_{a,\theta}}(z)=1/ z$ when $a=2$.}
\end{enumerate} Therefore, according to Theorem \ref{propertyH1} and Corollary \ref{propertyH3}, if $\theta \neq \pm 1$ and $a \neq 2$, then $\nu_{a,\theta}$ and the $\boxplus$-semigroup generated by $\nu_{a,\theta}$ both have property (H) from these explicit formulae. The next two examples show that the property (H) is not a topological property under weak convergence.
\begin{enumerate} [$\qquad(1)$]
\item
{Let $\sigma_{\mu}$ be an absolutely continuous measure defined by the density $f(s)=1$ for $s \in [-1,1]$ and $f(s)=|s|^{-3}$ for $|s|>1$ and let $\gamma_{\mu}=0$. The tail-sums of $\sigma_{\mu}$ satisfy $\sigma_{\mu}\left(\{s\in \mathbb{R}:|s|>y\}\right)=y^{-2}$ for $y\geq1$, so that $m_2(\sigma_{\mu})=+\infty$ and the measures \[\mu_n=D_{1/\sqrt{n \log n}}\,\mu^{\boxplus n}\] converge weakly to the standard semicircular law $\nu_{2,0}$ by the free central limit theorem [\ref{Pata}]. While the law $\nu_{2,0}$ does not have property (H), the attracted measure $\mu$ does, because $m_2(\mu)=+\infty$ and
\[
\int_{\mathbb{R}}\frac{1+s^{2}}{(x-s)^{2}}\,d\sigma_{\mu}(s)\geq f(|[x]|+1)\int_{[[x],[x]+1)}\frac{d\lambda(s)}{(x-s)^{2}}=+\infty
\]
for all $x\in\mathbb{R}$. In fact, every measure $\mu_n$ has property (H).
}
\item
{If $\nu_{a,\theta}$ is a stable law with property
(H), then its domain
of attraction always contains an $\boxplus$-infinitely divisible law which does not have property (H). To show this, fix $a\in(0,1)\cup(1,2)$ and $\theta\in(-1,1)$, and let $m\in\mathbb{N}$ be large enough so that $am>3$ and $(2-a)m>1$. Choose the numbers $p=(1-\theta)/2$, $q=(1+\theta)/2$, $c=a\pi/(2|\cos(a\pi/2)|)$,
$\gamma_{\mu}=0$. Define the L\'{e}vy measure $\sigma_{\mu}=\sigma_{\mu}^{+}+\sigma_{\mu}^{-}$
by specifying
\[
d\sigma_{\mu}^{+}(s)=pc\,s^{-(a+1)}\,d\lambda(s),\quad1\leq s<+\infty,
\]
and
\[
d\sigma_{\mu}^{-}(s)=(q/p)\sum\nolimits_{k=1}^{\infty}\sigma_{\mu}^{+}\big([k^m,(k+1)^m)\big)\,\delta_{-k^m}.
\]
Then the tail-sum $\sigma_{\mu}(\left\{ s\in\mathbb{R}:|s|>y\right\} )$
is a regularly varying function of index $-a$. By the results
in [\ref{BerPata99}], there are constants $a_{n}\in\mathbb{R}$ and $b_{n}>0$
such that $b_{n}\rightarrow0$ and the measures
\[
\mu_{n}=D_{b_{n}}\mu^{\boxplus n}\boxplus\delta_{a_{n}}
\]
converge weakly to the stable law $\nu_{a,\theta}$ as $n\rightarrow\infty$.
We also have $b_{n}=n^{-1/a}\ell(n)$ where $\ell(n)$ is a slowly
varying sequence in $\mathbb{N}$. Note that the L\'{e}vy measure of $\mu_{n}\in\mathcal{ID}(\boxplus)$
is given by
\[
d\sigma_{\mu_{n}}(s)=n\,\frac{b_{n}^{2}+s^{2}}{1+s^{2}}\,d\sigma_{\mu}(s/b_{n}),
\] so we have
\begin{align*}
\int_{-\infty}^{0}\frac{1+s^{2}}{(x-s)^{2}}\,d\sigma_{\mu_{n}}(s)&=nb_n^2\int_{-\infty}^0\frac{1+s^2}{(x-b_ns)^2}\;
d\sigma_\mu^-(s) \\
&\leq
\int_{-\infty}^{-1}\frac{2ns^{2}}{(x/b_n-s)^{2}}\,d\sigma_{\mu}^-(s).
\end{align*} Consider $x_n=-b_n[(n+1)^m+n^m]/2$ and $\widetilde{x}_n=x_n/b_n$ for $n\in\mathbb{N}$. Since
\begin{align*}
\sigma_{\mu}^{+}([k^m,(k+1)^m))&=pcm\int_k^{k+1}s^{-(am+1)}\;d\lambda(s) \\
&\leq pcmk^{-(am+1)},\;\;\;\;\;k\in\mathbb{N},
\end{align*} and
\[|\widetilde{x}_n+k^m|\geq\frac{(n+1)^m-n^m}{2}\geq\frac{m\cdot n^{m-1}}{2}, \quad 1\leq k\leq n,\] it follows that
\begin{align*}
\int_{[-n^m,-1]}\frac{ns^{2}\,d\sigma_{\mu}^{-}(s)}{(\widetilde{x}_n-s)^{2}}&
=\frac{nq}{p}\sum_{k=1}^n\frac{k^{2m}\sigma_{\mu}^{+}([k^m,(k+1)^m))}
{(\widetilde{x}_n+k^m)^2} \\
&\leq\frac{4qc}{m\cdot n^{2m-3}}\sum_{k=1}^nk^{(2-a)m-1} \\
&\leq\frac{4qc}{m\cdot n^{2m-3}}\int_1^{n+1}s^{(2-a)m-1}\;d\lambda(s) \\
&\leq\frac{2^{(2-a)m+2}qc}{(2-a)m^2}\cdot\frac{1}{n^{am-3}}.
\end{align*} The inequalities
\[0<\frac{k^m}{\widetilde{x}_n+k^m}\leq\frac{(n+1)^m}{\widetilde{x}_n+(n+1)^m}\leq2(n+1),\;\;\;\;\;k\geq n+1,\] then imply that
\begin{align*}
\int_{(-\infty,-n^m)}\frac{ns^{2}\,d\sigma_{\mu}^{-}(s)}{(\widetilde{x}_n-s)^{2}}
&=n\sum_{k=n+1}^\infty\left(\frac{k^m}{\widetilde{x}_n+k^m}\right)^2\sigma_\mu^-(\{-k^m\}) \\
&\leq16n^3\sigma_\mu^-((-\infty,-n^m))=\frac{16\widetilde{\ell}(n^m)}{n^{am-3}}
\end{align*} for some slowly varying function $\widetilde{\ell}$. These findings yield that
\[\lim_{n\to\infty}\int_{-\infty}^{0}\frac{1+s^{2}}{(x_{n}-s)^{2}}\,d\sigma_{\mu_{n}}(s)=0.\] On the other hand, the fact that
\[|x_n|\geq n^mb_n=n^{m-1/a}\ell(n)\geq n\] for all sufficiently large $n$ and Proposition 5.8 in [\ref{BerPata99}] imply
that
\begin{eqnarray*}
\int_{0}^{\infty}\frac{1+s^{2}}{(x_{n}-s)^{2}}\,d\sigma_{\mu_{n}}(s) & = & nb_{n}^{2}\int_{1}^{\infty}\frac{1+s^{2}}{(x_{n}-b_{n}s)^{2}}\,d\sigma_{\mu}^{+}(s)\\
 & \leq & 2nb_{n}^{2}\int_{1}^{\infty}\frac{s^{2}}{n^{2}+(b_{n}s)^{2}}\,d\sigma_{\mu}^{+}(s)\\
 & \sim & \frac{a\pi}{\sin(a\pi/2)}n\sigma_{\mu}^{+}((nb_{n}^{-1},+\infty))\\
 & = & \frac{\pi pc}{\sin(a\pi/2)}\left[\frac{\ell(n)}{n}\right]^{a}\rightarrow0
\end{eqnarray*}
as $n\rightarrow\infty$. We conclude at once that when $n$ is sufficiently
large,
\[
\int_{\mathbb{R}}(1+s^{2})(x_{n}-s)^{-2}\,d\sigma_{\mu_{n}}(s)<1
\]
and the measure $\mu_{n}$ does not have property (H)
even though $m_{2}(\mu_{n})=+\infty$. Finally, we remark that the case $a=1$ corresponds to Cauchy distributions, and it is easy to construct examples of this sort.\qed}
\end{enumerate} }
\end{exam}

\begin{exam} [Normal and Askey-Wimp distributions] \emph{The Askey-Wimp laws $\mu_{c}$
is a family of $\boxplus$-infinitely divisible laws with positive
density everywhere on $\mathbb{R}$ and $m_{2}(\mu_{c})<+\infty$
(see [\ref{normal}] for a proof of the free infinite divisibility). They
are parameterized by $c\in(-1,0]$, with $\mu_{0}$
being the standard normal distribution. For such a measure $\mu_{c}$
and a nondegenerate $\nu\in\mathcal{P}_{\mathbb{R}}$, type $A$ and $B$
boundary points do not exist because $p_{\mu_c}>0$ on $\mathbb{R}$. So, the measure $\mu_{c}\boxplus\nu$
is always absolutely continuous with $\text{supp}(\mu_{c}\boxplus\nu)=\mathbb{R}$ and a continuous density everywhere on $\mathbb{R}$.
The density $p_{\mu_{c}\boxplus\nu}$ would vanish at the point $s_{0}=h(\alpha)$
as long as a type $C$ point $\alpha$ shows up. On the other hand, the density $p_{\mu_{c}\boxplus\nu}$
could be positive and analytic everywhere on $\mathbb{R}$ even for
some extremely singular $\nu$ such as
\[
\nu=\frac{18}{\pi^{4}}\sum\nolimits _{m\in\mathbb{Z}}\frac{1}{m^2}\left\{ \sum\nolimits _{n\in\mathbb{N}}\frac{1}{2^{n}n^{2}}\sum\nolimits_{j=1}^{2^{n}}\delta_{m+j2^{-n}}\right\}
\]
or
\[
d\nu(s)=\frac{18}{\pi^{4}}\sum\nolimits _{m\in\mathbb{Z}}\frac{1}{m^{2}}I_{[m,m+1]}(s)\left\{ \sum\nolimits _{n\in\mathbb{N}}\frac{1}{2^{n}n^{2}}\sum\nolimits _{j=1}^{2^{n}}d\tau\left(s-m-j2^{-n}\right)\right\},
\]
where $I_{[m,m+1]}$ is the indicator function of the interval $[m,m+1]$
and $\tau$ denotes the Cantor distribution on $[0,1]$. This is because
these examples satisfy
\[
\int_{\mathbb{R}}(x-s)^{-2}\,d\nu(s)=+\infty,\quad x\in \mathbb{R},
\]
(see [\ref{BSimon}] for the divergence of this singular integral), which
rules out the existence of type $C$ boundary points. In contrast,
for the Bernoulli trial $\nu=(\delta_{-1}+\delta_{1})/2$ or the absolutely
continuous measure
\[
d\nu(s)=(3/8)[s^{2}I_{[-1,1]}(s)+s^{-2}I_{\mathbb{R}\setminus[-1,1]}(s)]\,d\lambda(s),
\]
the point $\alpha=0$ is the only type $C$ boundary point
for $\mu_{0}\boxplus\nu$, with the angular derivative $\omega^{\prime}(h(0))=+\infty$.
Then $p_{\mu_{0}\boxplus\nu}(h(0))=0$, and $p_{\mu_{0}\boxplus\nu}$ is not analytic at $h(0)$ by Proposition \ref{nonanaR} (2) in the next section.}\qed
\end{exam}

\subsection{Analyticity at zeros}
We now examine the analyticity
of the density $p_{\mu\boxplus\nu}$ at a zero $s_0=h(\alpha)$ in its support, where $\alpha=\omega(s_0)\in \partial \Omega \cap \mathbb{R}$.

\begin{prop} \label{nonanaR}
The density $p_{\mu\boxplus\nu}$ is not analytic at the zero
$s_0$ in the following two situations:
\begin{enumerate} [$\qquad(1)$]
\item {There is a sequence $s_n \in \mathbb{R}\setminus \{s_0\}$ such that $p_{\mu\boxplus\nu}(s_n)=0$
for all $n$ and $\lim_{n\rightarrow \infty}s_n=s_0$.}
\item {The angular
derivative $\omega^{\prime}(s_0)=+\infty$.}
\end{enumerate}
\end{prop}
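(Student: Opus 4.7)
\emph{For part \textup{(1)}}, I would invoke the identity theorem. If $p_{\mu\boxplus\nu}$ were real analytic on an open interval $I\ni s_0$, the sequence $s_n\to s_0$ of distinct zeros in $I$ would force $p_{\mu\boxplus\nu}\equiv 0$ on $I$. But by Theorem \ref{thmR1} (1), $s_0\in\overline{h(V)}$ since $s_0$ lies in the support of $\mu\boxplus\nu$, so $I\cap h(V)\neq\emptyset$; on $h(V)$ the density is strictly positive by Theorem \ref{thmR1} (2). This contradicts the vanishing of $p_{\mu\boxplus\nu}$ on $I$.

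\emph{For part \textup{(2)}}, the plan is to suppose $p_{\mu\boxplus\nu}$ analytic at $s_0$ and show that the Julia ratio $\Im(-G_{\mu\boxplus\nu}(z))/\Im z$ at $s_0$ is forced to be simultaneously $+\infty$ and finite. Theorem \ref{thmR1} (4) first gives $(\mu\boxplus\nu)(\{s_0\})=0$, so $s_0$ is not an atom, and Corollary \ref{analyextR2} (1) then extends $G_{\mu\boxplus\nu}$ analytically to a complex neighborhood of $s_0$; the extension satisfies $G_{\mu\boxplus\nu}(s_0)\in\mathbb{R}$ because $p_{\mu\boxplus\nu}(s_0)=0$. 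Using the subordination $G_{\mu\boxplus\nu}=G_\nu\circ\omega$ and Lindel\"{o}f's theorem applied to the curve $\{\omega(s_0+i\epsilon):\epsilon>0\}$ terminating at $\alpha$, I identify the non-tangential limit $G_\nu^*(\alpha)=G_{\mu\boxplus\nu}(s_0)$ as finite; this rules out type $A$ for $\alpha$, and for the remaining types $B$ and $C$ the integral $I_\nu(\alpha)=\int_\mathbb{R}d\nu(s)/(\alpha-s)^2$ is a positive finite number, by Theorem \ref{boundaryR} and Corollary \ref{measureatom} (b). Differentiating the subordination yields
\[
\frac{\Im(-G_{\mu\boxplus\nu}(z))}{\Im z}\;=\;\frac{\Im\omega(z)}{\Im z}\cdot\int_\mathbb{R}\frac{d\nu(t)}{|\omega(z)-t|^2},\qquad z\in\mathbb{C}^+,
\]
so as $z\rightarrow_{\sphericalangle}s_0$ the first factor tends to $\omega'(s_0)=+\infty$, and once $\omega(z)\rightarrow_{\sphericalangle}\alpha$ is verified, Proposition \ref{prop2.1} (1)--(2) gives the second factor the limit $I_\nu(\alpha)\in(0,+\infty)$. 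The Julia ratio therefore diverges, so Julia-Carath\'{e}odory applied to the self-map $-G_{\mu\boxplus\nu}$ of $\mathbb{C}^+$ yields $(-G_{\mu\boxplus\nu})^\prime(s_0)=+\infty$, meaning $|G_{\mu\boxplus\nu}(z)-G_{\mu\boxplus\nu}(s_0)|/|z-s_0|\to+\infty$ non-tangentially. The analytic extension of $G_{\mu\boxplus\nu}$, however, forces the same quotient to converge to the finite complex derivative $|G_{\mu\boxplus\nu}^\prime(s_0)|$, yielding the desired contradiction.

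The main obstacle is verifying the non-tangential preservation $\omega(z)\rightarrow_{\sphericalangle}\alpha$ whenever $z\rightarrow_{\sphericalangle}s_0$, under $\omega'(s_0)=+\infty$. This is a Julia-lemma / Schwarz-Pick horodisc fact: an infinite angular derivative forces $\omega$ to send each horodisc at $s_0$ into an arbitrarily small horodisc at $\alpha$, and small horodiscs sit inside any preassigned Stolz angle at $\alpha$. Ruling out type $A$ via Lindel\"{o}f is a subsidiary but delicate step; once these are handled, the rest of the argument is a routine chase through the subordination framework of Proposition \ref{Hprop1} and the Julia-Carath\'{e}odory dictionary of Section 2.3.
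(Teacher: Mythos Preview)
Your part (1) is correct and matches the paper's argument exactly.

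For part (2), your overall contradiction strategy---show the Julia ratio of $-G_{\mu\boxplus\nu}$ at $s_0$ is forced to be both finite (from the assumed analytic extension) and infinite (from the subordination factorization)---is precisely the paper's approach. Your handling of the type $A$ case via Lindel\"of, obtaining a finite $G_\nu^*(\alpha)$ in contradiction with $F_\nu^*(\alpha)=0$, is in fact cleaner than the paper's one-line reference to Corollary \ref{measureatom}.

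However, the step you flag as the ``main obstacle'' is a genuine gap, and your proposed resolution is incorrect. Julia's inequality for a self-map $\omega:\mathbb{C}^+\to\mathbb{C}^+$ with boundary value $\alpha$ at $s_0$ reads
\[
\frac{|\omega(z)-\alpha|^2}{\Im\omega(z)}\;\le\;c\cdot\frac{|z-s_0|^2}{\Im z},\qquad c=\liminf_{z\to s_0}\frac{\Im\omega(z)}{\Im z},
\]
so the image of a horodisc of parameter $k$ lies in a horodisc of parameter $ck$. A \emph{large} Julia constant enlarges the image horodisc, and when $c=\omega'(s_0)=+\infty$ the inequality is vacuous; it does \emph{not} force $\omega$ into small horodiscs at $\alpha$. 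Thus your horodisc argument gives no control on the approach angle of $\omega(z)$, and the non-tangential convergence $\omega(z)\to_\sphericalangle\alpha$ remains unproved. (Proposition \ref{Hprop2}(iii) gives this non-tangential correspondence only when $H'(\alpha)\neq 0$, i.e.\ $\omega'(s_0)<+\infty$, which is exactly the case you are \emph{not} in.)

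The paper sidesteps the issue entirely by working with $\liminf$ rather than a limit. One writes
\[
\liminf_{z\to s_0}\frac{-\Im G_{\mu\boxplus\nu}(z)}{\Im z}
=\liminf_{z\to s_0}\frac{-\Im G_\nu(\omega(z))}{\Im\omega(z)}\cdot\frac{\Im\omega(z)}{\Im z}
\ge\Bigl(\liminf_{w\to\alpha}\frac{-\Im G_\nu(w)}{\Im w}\Bigr)\Bigl(\liminf_{z\to s_0}\frac{\Im\omega(z)}{\Im z}\Bigr),
\]
using only that $\omega(z)\to\alpha$ (no angle control) and the elementary inequality $\liminf(AB)\ge(\liminf A)(\liminf B)$ for nonnegative quantities. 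The first factor on the right equals $I_\nu(\alpha)>0$ by Proposition \ref{prop2.1}(1), which computes the $\liminf$ over \emph{all} approaches $w\to\alpha$, not just non-tangential ones; the second factor is $\omega'(s_0)=+\infty$ by hypothesis. This gives the desired contradiction with the finite derivative of the analytic extension, without ever needing $\omega(z)\to_\sphericalangle\alpha$.
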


Thus, we shall assume that $s_0$ is an isolated zero of $p_{\mu\boxplus\nu}$ and $\omega^{\prime}(s_0)<+\infty$ from now on. (The latter finiteness condition means that a strict inequality occurs in Theorem \ref{boundaryR}.)

In the next result, the value of the vertical limit $F^{*}_{\nu}(\alpha)$ for $\alpha \in C$ is interpreted as the point of $\infty$, so that it makes sense to talk about the analyticity at $F^{*}_{\nu}(\alpha)$ for a measure on $\mathbb{R}$ (see Definition \ref{def2.7}).

\begin{thm} \label{analyticR1}
Let $s_0=h(\alpha)$ be an isolated zero of $p_{\mu\boxplus\nu}$ in its support such that $\alpha$ satisfies one of the strict inequalities in \emph{Theorem \ref{boundaryR}}. Assume that the L\'{e}vy measure $\sigma_{\mu}$ is analytic at the point $F^{*}_{\nu}(\alpha)$. Then, depending on the type of the boundary points, the analyticity of $p_{\mu\boxplus\nu}$ at $s_0$ is equivalent to one of the two conditions:
\begin{enumerate} [$\qquad(1)$]
\item {The measure $\nu$ is meromorphic at the point
$\alpha$ when $\alpha$ is of type $A$.}
\item {The measure $\nu$ is analytic at the point $\alpha$ when $\alpha$ is of type $B$ or $C$.}
\end{enumerate}
The zero $s_0$ is an atom of $\mu\boxplus\nu$ in the case \emph{(1)}. In the case \emph{(2)}, we have: \[\alpha=s_0-\gamma_{\mu}+\int_{\mathbb{R}}\frac{1+sF^{*}_{\nu}(\alpha)}{s-F^{*}_{\nu}(\alpha)}\,d\sigma_{\mu}(s)\] when $\alpha \in B$, and $\alpha=s_0-\gamma_{\mu}-m_1(\sigma_{\mu})$ when $\alpha \in C$. \end{thm}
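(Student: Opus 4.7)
The plan is to bootstrap between the two forms of the subordination identity, namely $G_{\mu\boxplus\nu}=G_\nu\circ\omega$ on $\mathbb{C}^+$ and the inverted version $\omega(z)=z-\varphi_\mu(F_{\mu\boxplus\nu}(z))$, using Proposition \ref{Hprop3} (4) to link analytic extensions of $\omega$ near $s_0$ and of $H$ near $\alpha$. The hypothesis that $\sigma_\mu$ is analytic at $F_\nu^*(\alpha)$ produces an analytic extension of $\varphi_\mu$ through $F_\nu^*(\alpha)$: since $-\varphi_\mu$ has a Nevanlinna form with representing measure $\sigma_\mu$, Greenstein's theorem (Proposition \ref{prop2.4} (1)) extends $\varphi_\mu$ across any interval on which $\sigma_\mu$ is analytic, and Corollary \ref{cor2.6} extends $\psi(u):=\varphi_\mu(1/u)$ to a neighborhood of $u=0$ when $\sigma_\mu$ is analytic at $\infty$ (the case $\alpha\in C$). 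The strict-inequality hypothesis from Theorem \ref{boundaryR} gives $H^\prime(\alpha)\in(0,+\infty)$ and $\omega^\prime(s_0)=1/H^\prime(\alpha)\in(0,+\infty)$, so any analytic extension of $\omega$ at $s_0$ is automatically a local biholomorphism.

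For the ``if'' direction, assume $\nu$ is analytic at $\alpha$ in cases $B$, $C$ and meromorphic at $\alpha$ in case $A$. Proposition \ref{prop2.4} gives an analytic extension of $G_\nu$ near $\alpha$ (cases $B$, $C$) or a meromorphic extension with a simple pole of residue $\nu(\{\alpha\})$ (case $A$), and $F_\nu$ correspondingly extends analytically near $\alpha$, with a simple zero in case $A$, a nonzero value in case $B$, and a simple pole in case $C$ (the latter because $G_\nu$ has a simple zero at $\alpha$ by Proposition \ref{prop2.1} (2) combined with the strict $C$-inequality). In each case $\varphi_\mu\circ F_\nu$ extends analytically near $\alpha$ — rewritten as $\psi\circ G_\nu$ in case $C$ — so $H=z+\varphi_\mu\circ F_\nu$ does too, and Proposition \ref{Hprop3} (4) lifts the extension to $\omega$ near $s_0$. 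Composing, $G_{\mu\boxplus\nu}=G_\nu\circ\omega$ extends analytically near $s_0$ in cases $B$, $C$ and meromorphically with a simple pole of residue $\nu(\{\alpha\})/\omega^\prime(s_0)$ (coming from $\omega(z)-\alpha\sim\omega^\prime(s_0)(z-s_0)$) in case $A$. Corollary \ref{analyextR2} then yields analyticity of $p_{\mu\boxplus\nu}$ at $s_0$ and identifies $s_0$ as an atom in case $A$.

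For the ``only if'' direction, assume $p_{\mu\boxplus\nu}$ is analytic at $s_0$. Example \ref{exam3.8} and Theorem \ref{thmR1} (4) show that $s_0$ is an atom of $\mu\boxplus\nu$ exactly when $\alpha\in A$, so Corollary \ref{analyextR2} gives an analytic extension of $G_{\mu\boxplus\nu}$ near $s_0$ in cases $B$, $C$ and a meromorphic extension with a simple pole in case $A$. Equivalently, $F_{\mu\boxplus\nu}$ extends analytically near $s_0$, with a simple zero in case $A$, a nonzero finite value in case $B$, and a simple pole in case $C$ (the simplicity in case $C$ coming from $-G_{\mu\boxplus\nu}^\prime(s_0)=\int d(\mu\boxplus\nu)(s)/(s_0-s)^2>0$ via Proposition \ref{prop2.1} (2)). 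Substituting this behavior into $\omega(z)=z-\varphi_\mu(F_{\mu\boxplus\nu}(z))$ — rewritten as $z-\psi(G_{\mu\boxplus\nu}(z))$ in case $C$ — and using the analytic extension of $\varphi_\mu$ supplied by the hypothesis on $\sigma_\mu$, we obtain an analytic extension of $\omega$ near $s_0$. Since $\omega^\prime(s_0)\neq 0$, a local analytic inverse $\omega^{-1}$ exists, and the identity $F_\nu\circ\omega=F_{\mu\boxplus\nu}$ gives $F_\nu=F_{\mu\boxplus\nu}\circ\omega^{-1}$ as an analytic extension of $F_\nu$ across $\alpha$ with the required zero/pole profile. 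Proposition \ref{prop2.4} together with Corollary \ref{measureatom} then translates this into $\nu$ being meromorphic (case $A$, with atom mass $\nu(\{\alpha\})=1/F_\nu^\prime(\alpha)$) or analytic (cases $B$, $C$) at $\alpha$.

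The explicit formulas in cases $B$ and $C$ are obtained by letting $z\to s_0$ non-tangentially in $\omega(z)=z-\varphi_\mu(F_{\mu\boxplus\nu}(z))$ with $F_{\mu\boxplus\nu}^*(s_0)=F_\nu^*(\alpha)$: case $B$ reproduces the displayed integral directly from the free L\'evy--Hin\v{c}in form of $\varphi_\mu$, while case $C$ additionally uses $m_2(\sigma_\mu)<+\infty$ (from the strict $C$-inequality) to pass $\lim_{w\to\infty}\varphi_\mu(w)=\gamma_\mu+m_1(\sigma_\mu)$ under the integral via dominated convergence. I expect the main technical care to lie in the case-by-case bookkeeping of zero and pole orders at $\alpha$ and $s_0$ — in particular, verifying in case $C$ that the simple zero of $G_{\mu\boxplus\nu}$ transfers through the local diffeomorphism $\omega^{-1}$ to a simple pole of $F_\nu$ at $\alpha$, and that the ``point at infinity'' composition $\psi\circ G_{\mu\boxplus\nu}$ genuinely delivers an analytic extension of $\omega$ via Corollary \ref{cor2.6}.
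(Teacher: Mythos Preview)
Your proposal is correct and follows essentially the same approach as the paper: both directions bootstrap between the subordination identity $G_{\mu\boxplus\nu}=G_\nu\circ\omega$ and the inverted relation $\omega(z)=z-\varphi_\mu(F_{\mu\boxplus\nu}(z))$ (rewritten via $R_\mu$ in case $C$), using the analyticity of $\varphi_\mu$ at $F_\nu^*(\alpha)$ supplied by the hypothesis on $\sigma_\mu$. The only cosmetic difference is that in the ``only if'' direction the paper invokes Proposition~\ref{Hprop3}(4) to extend $H$ and then writes $G_\nu=G_{\mu\boxplus\nu}\circ H$, whereas you take the local inverse $\omega^{-1}$ directly via the inverse function theorem and write $F_\nu=F_{\mu\boxplus\nu}\circ\omega^{-1}$ --- these are the same map.
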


\textbf{Remarks.} One has the derivative $p_{\mu\boxplus\nu}^\prime(s_0)=0$ if $p_{\mu\boxplus\nu}$ is analytic at $s_0$. This is simply because $s_0$ is a local minimum of the density. \qed

\begin{exam}[Free Poisson Process] \label{exampleR3}
\emph{Let $\mu_{t}$ be the Mar\v{c}enko-Pastur
law associated with parameter $t>0$. Thus, we have for any $t>0$,
\[
\gamma_{\mu_{t}}=t/2,\quad\sigma_{\mu_{t}}=(t/2)\,\delta_{1},
\]
and $s_{\mu_{t}}=0$ if $t\leq1$.
\begin{enumerate}
\item (\textbf{Type} $A$) Consider $t<1/2$ and the probability measure
\[d\nu(s)=2t\,\delta_{\alpha}+(1-2t)I_{[a,b]}(s)\,d\lambda(s)\]
supported on $[a,b]$, where $b=a+1\in\mathbb{R}$ and $\alpha=(a+b)/2$. Then
\[\int_{\mathbb{R}}(1+s^{2})s^{-2}\,d\sigma_{\mu_t}(s)=t<\nu(\{\alpha\}).\]
Note that for any $x\in[a,b]\setminus\left\{ \alpha\right\} $, we
have
\[\int_{\mathbb{R}}\frac{d\nu(s)}{(x-s)^{2}}\geq(1-2t)\int_{a}^{b}\frac{d\lambda(s)}{(x-s)^{2}}=+\infty,\]
implying that $x\in V$. Since $H(\alpha)=\alpha$ and $H'(\alpha)=1/2$, it follows that $h(\alpha)=\alpha$ is an
isolated zero of $p_{\mu_{t}\boxplus\nu}$ in its support and $\omega^{\prime}(\alpha)<+\infty$.
Clearly, $\sigma_{\mu_{t}}$ is analytic at $F_{\nu}^{*}(\alpha)=0$
and $\nu$ is meromorphic at the type $A$ point $\alpha$, and so the density
$p_{\mu_{t}\boxplus\nu}$ is analytic at the atom $\alpha$.
\item (\textbf{Type} $B$) Consider $t<9/4$ and let $\nu$ be the absolutely
continuous measure
\[
d\nu(s)=3^{-1}s^{2}I_{[-1,2]}(s)\,d\lambda(s).
\]
For any $x\in[-1,2]$, one has
\[
\int_{\mathbb{R}}(x-s)^{-2}\,d\nu(s)=3^{-1}\int_{-1}^{2}s^{2}(x-s)^{-2}\,d\lambda(s)=\begin{cases}
+\infty & \text{if }x\neq0;\\
1 & \text{if }x=0,
\end{cases}
\]
showing that $[-1,0)\cup(0,2]\subset V$. Proposition \ref{prop2.1} shows further
that
\[G_{\nu}^{*}(0)=-\int_{\mathbb{R}}s{}^{-1}\,d\nu(s)=-\frac{1}{2}.\]
Since $t<9/4$, the point $\alpha=0$ is of type $B$ and $s_{0}=h(0)$
is an isolated zero of $p_{\mu_{t}\boxplus\nu}$ satisfying $\omega^{\prime}(s_{0})<+\infty$
in its support. Since $\sigma_{\mu_t}$ and $\nu$ are analytic at $F_{\nu}^{*}(\alpha)=-2$
and $\alpha$ respectively, the density $p_{\mu_{t}\boxplus\nu}$
is analytic at $s_{0}=2t/3$.
\item (\textbf{Non-analytic Type} $B$) Take $t<25/126$ for the absolutely
continuous measure $\nu$ defined by the density
\[
q(s)=(12/7)\left[s^{2}I_{[-1,0]}(s)+s^{3}I_{[0,1]}(s)\right].
\]
Then $\alpha=0$ is a type $B$ boundary point and $s_{0}=7t/5$ is
an isolated zero of $p_{\mu_{t}\boxplus\nu}$ in its support such
that $\omega^{\prime}(s_{0})<+\infty$. Also, $\sigma_{\mu_t}$ is analytic
at $F_{\nu}^{*}(\alpha)=7/2$. Since the density $q$ is not $C^{3}$
at the origin, the density $p_{\mu_{t}\boxplus\nu}$ is not analytic
at $s_{0}$.
\item (\textbf{Type} $C$) Consider $t<1$ and let $\nu$ be the absolutely
continuous measure
\[
d\nu(s)=(3/8)\left[s^{2}I_{[-1,1]}(s)+s^{-2}I_{\mathbb{R}\setminus[-1,1]}\right]\,d\lambda(s).
\]
This time we have $\mathbb{R}\setminus V=\left\{ 0\right\} $ and
$\alpha=0$ is a type $C$ boundary point. So, the point $s_{0}=t$
is the unique zero of $p_{\mu_{t}\boxplus\nu}$, with $\text{supp}((\mu_{t}\boxplus\nu)_{\text{ac}})=\mathbb{R}$
and $\omega^{\prime}(s_{0})<+\infty$. The density $p_{\mu_{t}\boxplus\nu}$
is analytic at $s_{0}$ in this case. \qed
\end{enumerate}}
\end{exam}

\setcounter{equation}{0}
\section{The Proofs}
\subsection{Global inversion and applications} The following appeared in [\ref{Huang14}].
\begin{lem} \label{strictdecrR}
Let $\rho\in \mathcal{M}_{\mathbb{R}}\setminus \{0\}$. For every
$x\in\mathbb{R}$, the continuous function
\[\mathcal{I}_x(y)=\int_\mathbb{R}\frac{1+s^2}{(x-s)^2+y^2}\;d\rho(s)\] is strictly
decreasing on $(0,\infty)$,
$\lim_{y\to+\infty}\mathcal{I}_x(y)=0$, and
\[\lim_{y\rightarrow 0^{+}}\mathcal{I}_x(y)=\sup_{y > 0}\mathcal{I}_x(y)=
\int_\mathbb{R}\frac{1+s^2}{(x-s)^2}\;d\rho(s)\in(0,+\infty].\] If $\sup_{x\in I}\sup_{y > 0}\mathcal{I}_x(y)<+\infty$
for some open interval $I$, then $\rho(I)=0$.
\end{lem}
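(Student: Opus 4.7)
The plan is to deduce the four quantitative statements about $\mathcal{I}_{x}$ from standard monotone/dominated convergence arguments and then treat the final implication about $\rho(I)$ as a small-ball-estimate problem. For the finiteness of $\mathcal{I}_{x}(y)$ at each $y>0$, I would split the integrand according to $|s|\le 2|x|$ versus $|s|>2|x|$: on the bounded piece the integrand is dominated by $(1+4x^{2})/y^{2}$, while on the tail piece $(x-s)^{2}\ge s^{2}/4$, so the integrand is dominated by $4(1+s^{-2})\le 8$, hence $\rho$-integrable since $\rho\in\mathcal{M}_{\mathbb{R}}$ is finite. Strict monotonicity in $y$ is immediate from the strict pointwise inequality of integrands combined with $\rho\ne 0$. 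Continuity on $(0,\infty)$ and the limit $\mathcal{I}_{x}(y)\to 0$ as $y\to\infty$ both follow from dominated convergence, with the integrand at a fixed $y_{0}>0$ serving as the dominator on $[y_{0},\infty)$. The limit as $y\to 0^{+}$, the supremum identity, and the value in $(0,+\infty]$ all follow from the monotone convergence theorem applied to the monotone increasing family $y\downarrow 0$, together with the fact that the integrand is strictly positive somewhere on $\mathrm{supp}(\rho)$.

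For the last and substantive assertion, the idea is a Chebyshev-style truncation. Restricting the domain in $\mathcal{I}_{x}(y)$ to $\{s:|x-s|\le y\}$, on which $(x-s)^{2}+y^{2}\le 2y^{2}$ and $1+s^{2}\ge 1$, gives
\[
\mathcal{I}_{x}(y)\ge \frac{\rho([x-y,x+y])}{2y^{2}}.
\]
Combining this with the hypothesis $\mathcal{I}_{x}(y)\le M$ uniformly for $x\in I$, $y>0$ produces the estimate
\[
\rho([x-y,x+y])\le 2My^{2}\qquad \text{for all } x\in I\text{ and all } y>0.
\]

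To conclude $\rho(I)=0$, I would fix a compact subinterval $I'\subset I$ and, for each small $y>0$, partition $I'$ into at most $\lceil \lambda(I')/y\rceil+1$ intervals of length at most $y$. Each such piece is contained in some $[x_{k}-y,x_{k}+y]$ whose center $x_{k}$ lies in $I'\subset I$, so the above estimate applies. Summing gives
\[
\rho(I')\le \left(\lambda(I')/y+1\right)\cdot 2My^{2}=2M\lambda(I')\,y+2My^{2},
\]
which tends to $0$ as $y\to 0^{+}$, forcing $\rho(I')=0$. Exhausting $I$ by an increasing sequence of such compact subintervals yields $\rho(I)=0$. The only technical subtlety is the bookkeeping ensuring the chosen centers $x_{k}$ lie in $I$ so that the hypothesis applies; everything else is elementary measure theory, and no differentiation theorem is required.
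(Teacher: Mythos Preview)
Your argument is correct and complete. The paper does not actually prove this lemma; it simply cites it as having appeared in \cite{Huang14} (reference~[\ref{Huang14}] in the paper), so there is no in-paper proof to compare against. Your monotone/dominated convergence treatment of the first four claims and the Chebyshev-style covering argument for $\rho(I)=0$ are exactly the kind of elementary arguments one expects here.

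One very minor slip: in your finiteness step, the bound $4(1+s^{-2})\le 8$ requires $|s|\ge 1$, not merely $|s|>2|x|$, so as written it fails when $|x|<1/2$. The fix is immediate: use instead the global estimate $1+s^{2}\le 2(1+x^{2})+2(x-s)^{2}$, which gives
\[
\frac{1+s^{2}}{(x-s)^{2}+y^{2}}\le \frac{2(1+x^{2})}{y^{2}}+2
\]
uniformly in $s$, and this constant is $\rho$-integrable since $\rho$ is finite. Everything else stands.
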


\addvspace{0.1in}

\textit{Proof} of $\mathbf{Proposition\;\;\ref{Hprop1}}$: We begin with the properties of the map $f$. The definition of $f$ implies that
\begin{equation}\label{f1}
\mathcal{I}_x(f(x))=\int_\mathbb{R}\frac{1+s^2}{(x-s)^2+f(x)^2}\;d\rho(s)\leq
b,\;\;\;\;\;x\in\mathbb{R}.
\end{equation}
Note that the equality in \eqref{f1} holds whenever $f(x)>0$. Indeed, if $f(x)>0$ and yet $\mathcal{I}_x(f(x))<b$, then there is a $\delta>0$ such that $\mathcal{I}_x(f(x))<b-\delta<b\leq \mathcal{I}_x(f(x)/2)$. The intermediate value theorem implies $\mathcal{I}_x(y)=b-\delta$ for some $0<y<f(x)$, a contradiction to the definition of $f(x)$.

The preceding observation and the strict monotonicity of $\mathcal{I}_x$ imply that if $f(x)>0$ then \[b=\mathcal{I}_x(f(x))<\int_{\mathbb{R}}\frac{1+s^2}{(x-s)^2}\,d\rho(s)=g(x).\] Conversely, $f(x)=0$ implies $\mathcal{I}_x(y)<b$ for all $y>0$, whence $g(x)=\sup_{y>0}\mathcal{I}_x(y)\leq b$. So, we have $V=\{x\in \mathbb{R}:g(x)>b\}$.

We already have $\Omega\supset \{x+iy\in \mathbb{C}^{+}:y>f(x)\}$, because the set $\Omega$ contains the vertical array $\{z+iy:y>0\}$ for any $z\in\Omega$. Conversely, if $x+iy\in \Omega\subset \mathbb{C}^{+}$ and $f(x)>0$ then $\mathcal{I}_x(y)<b=\mathcal{I}_x(f(x))$, showing that $f(x)<y$. It follows that $\Omega= \{x+iy\in \mathbb{C}^{+}:y>f(x)\}$.

Being a supremum of continuous maps, $g$ is lower semi-continuous. We now verify the continuity of $f$. Consider the function $F(x,y)=\mathcal{I}_x(y)-b$ on $\mathbb{R}\times(0,\infty)$, and let $x_0\in V$. Clearly, $F$, $\partial F/\partial x$ and $\partial F/\partial y$ are all continuous on some neighborhood of $(x_0,f(x_0))$. Since $F(x_0,f(x_0))=0$ and $\partial F/\partial y (x_0,f(x_0))\neq 0$, it follows from the implicit function theorem that $f$ is continuous at $x_0$. If $f$ is not continuous at some $x_0\in\mathbb{R}\backslash V$, i.e., there exist points $x_n$ so that $x_n\to x_0$ and $\inf f(x_n)\geq\epsilon>0$. Then dominated convergence theorem implies that
\[b=\lim_{n\to\infty}\mathcal{I}_{x_n}(f(x_n))\leq\lim_{n\to\infty}\int_\mathbb{R}\frac{(1+s^2)d\rho(s)}{(x_n-s)^2+\epsilon^2}
=\int_\mathbb{R}\frac{(1+s^2)d\rho(s)}{(x_0-s)^2+\epsilon^2}<b,\] which is clearly a contradiction. Hence $f$ is continuous on $\mathbb{R}$.

The continuity of $f$ implies $\partial\Omega=\left\{ x+if(x):x\in\mathbb{R}\right\} $.
Indeed, notice first that
\[
\mathcal{I}_{x}(f(x)+\epsilon)<b<\mathcal{I}_{x}(f(x)-\epsilon),\quad x\in V,\;\epsilon\in(0,f(x)),
\]
and $\partial\Omega\cap\mathbb{R}=f^{-1}(\left\{ 0\right\} )=\mathbb{R}\setminus V$.
This shows that $\left\{ x+if(x):x\in\mathbb{R}\right\} \subset\partial\Omega$.
Conversely, every boundary point $x+iy\in\partial\Omega$ satisfies $y\leq f(x)$. Moreover, for any $\Omega\ni z_{n}=x_{n}+iy_{n}\rightarrow x+iy$,
one has
\[
y=\lim_{n\rightarrow\infty}y_{n}\geq\lim_{n\rightarrow\infty}f(x_{n})=f(x),
\]
showing that we must have $y=f(x)$.

Another consequence of the continuity of $f$ is that the open set $\Omega$ is path-connected and hence connected. This is because for any two distinct points $z_j=x_j+iy_j$ ($j=1,2$) in $\Omega$ (say, $x_1<x_2$), there is a continuous vertical-horizontal-vertical path joining $z_1$ and $z_2$ through the coordinates $(x_1,m)$ and $(x_2,m)$ in $\Omega$, where $m=1+\max \{f(x):x_1\leq x \leq x_2\}$. A similar argument shows that any closed loop in $\Omega$ can be continuously retracted to a point in $\Omega$, proving that $\Omega$ is in fact simply connected.

We next address the boundary behaviour of $H$. For $z_{1}\neq z_{2}$ in $\Omega$, an application of
Cauchy-Schwarz inequality and (\ref{f1}) imply
\begin{equation} \label{eq:4.2}
\left|\frac{H(z_{1})-H(z_{2})}{z_{1}-z_{2}}-b\right|=\left|\int_{\mathbb{R}}\frac{1+s^{2}}{(z_{1}-s)(z_{2}-s)}\,d\rho(s)\right| \leq b.
\end{equation}
So the map $H$ is uniformly Lipschtz in $\Omega$ and as a such
it has a continuous extension to the closure $\overline{\Omega}$,
for which we still denote by $H$. The estimate \eqref{eq:4.2} extends by continuity to any pair of distinct points on $\overline{\Omega}$, and therefore
\[
\left|H(z_1)-H(z_2)\right|\leq2b|z_1-z_2|,\qquad z_1,z_2\in\overline{\Omega}.
\]

Before we go further, let us additionally assume that the measure $\rho$ in (\ref{f1}) is compactly supported on $\mathbb{R}$, say, $\mathrm{supp}(\rho)\subset[-L,L]$ for some finite $L>0$. We will show that under this extra assumption, $H$ is an injective map from $\Omega$ onto $\mathbb{C}^+$. It is equivalent to showing that for an arbitrary but fixed point $w\in\mathbb{C}^+$, the equation $H(z)=w$ has one and only one solution in $\Omega$. To show this, first observe that for all sufficiently large $|x|$ and for all $y>0$, we have
\[|bx-\Re H(x+iy)|=\left|a+\int_\mathbb{R}\frac{s(x^2+y^2-1)+(1-s^2)x}{(x-s)^2+y^2}\;d\rho(s)\right|\leq|a|+2L\rho(\mathbb{R}).\] We also have
$\Im H(x+iy)\geq by-(1+L^2)\rho(\mathbb{R})/y$ for all $x\in\mathbb{R}$ and $y>0$. These observations allow us to find a closed path $\gamma$ in $\Omega$ so that $w$ lies insides the region enclosed by the path $H(\gamma)$ and $H(\gamma)$ winds around the point $w$ only one time. For instance, $\gamma$ can be a rectangle-like path consisting of one curve lying above but close to $\partial\Omega$ as its base boundary, two vertical segments with sufficiently large $x$ and negative large $x$ coordinates as its opposite sides, and a horizontal segment with sufficiently large $y$-coordinates as its upper side.
Then
\[\frac{1}{2\pi i}\int_\gamma\frac{H'(z)}{H(z)-w}\;dz=\frac{1}{2\pi i}\int_{H(\gamma)}\frac{d\xi}{\xi-w}=1\] with Cauchy's argument principle shows that the function $H(z)-w$ has exactly one zero inside $\gamma$ as well as inside $\Omega$. We have shown that $H:\Omega\to\mathbb{C}^+$ is bijective and has an analytic inverse $\omega$ from $\mathbb{C}^+$ onto $\Omega$ provided that $\rho$ is compactly supported.

Now we return to the general case $\rho$. Choose positive measures with finite support $\rho_n$ converging to $\rho$ weakly, and let $H_n$, $\omega_n$, and $\Omega_n$ be the corresponding functions and sets associated with $\rho_n$. Then $H_n\to H$ uniformly on compact subsets of $\mathbb{C}^+$, and $\{\omega_n\}$ is normal by Montel's Theorem. We assume, without loss of generality, that $\omega_n\to\omega$ uniformly on compacta for some analytic function $\omega$ on $\mathbb{C}^+$.
For any but fixed $z\in\Omega$, $\lim H_n(z)=H(z)\in\mathbb{C}^+$ shows that $H_n(z)\in\mathbb{C}^+$ for all sufficiently large $n$, and so the uniform convergence yields that $\omega(H(z))=\lim\omega_n(H_n(z))=z$. Being an open mapping, the image $\omega(\mathbb{C}^+)$ of $\omega$ never touches the real line, i.e., $\omega(\mathbb{C}^+)\subset\mathbb{C}^+$. We conclude from the uniform convergence again that $H(\omega(\xi))=\lim H_n(\omega_n(\xi))=\xi$ for any $\xi\in\mathbb{C}^+$.
Consequently, $H$ is an injective map from $\Omega$ onto $\mathbb{C}^+$ and has $\omega$ as its inverse.

The boundary behaviour of the map $\omega$ is proved as follows. We consider the conjugation $W=M\circ\omega\circ M^{-1}$
under the Cayley transform $M(z)=(z-i)/(z+i)$, so that the function
$W$ maps the open disk $\mathbb{D}$ conformally onto $M(\Omega)$.
Note that
\[
\left|1-M(x+if(x))\right|= \frac{2}{\sqrt{x^{2}+[f(x)+1]^{2}}}\leq\frac{2}{|x|}\rightarrow0\quad(|x|\rightarrow\infty).
\]
Therefore, the image of the continuous graph $\{x+if(x):x\in\mathbb{R}\} $
under the transform $M$ is a non-self-intersecting continuous curve
in the closed disk $\overline{\mathbb{D}}$ such that both ends of this curve meet at the point $1$. In other words, the image $W(\mathbb{D})=M(\Omega)$
is a bounded domain whose boundary is a Jordan curve. By Carath\'{e}odory's extension theorem [\ref{Rudin}], the conformal map $W$ extends continuously to a homeomorphism
from the unit circle $\partial\mathbb{D}$ onto the Jordan curve $\partial M(\Omega)$.
This implies that $\omega$ extends continuously to a homeomorphism
from $\mathbb{R}$ to $\partial\Omega$. Of course, the extension, still denoted by $\omega$, satisfies \[2b|\omega(z_{1})-\omega(z_{2})|\geq|z_{1}-z_{2}|, \quad z_{1},z_{2}\in\mathbb{C}^{+}\cup\mathbb{R}.\] The inversion relations between the maps $H$ and $\omega$ are extended to the topological boundaries $\partial \Omega$ and $\mathbb{R}$ by continuity.

The map $h(x)=H(x+if(x))$, $x\in\mathbb{R}$, is simply
the inverse of the homeomorphism $\mathbb{R}\ni s\mapsto\Re\omega(s)$.

Now, we verify that both $h(x)=H(x+if(x))$ and $h^{-1}(s)=\Re\omega(s)$
are strictly increasing on $\mathbb{R}$. It suffices to prove this
only for $h^{-1}$. To this end, observe from (\ref{eq:4.2}) that the difference quotient $H(z_{1})-H(z_{2})/(z_{1}-z_{2})$
belongs to the closed disk $\left\{ z\in\mathbb{C}:|z-b|\leq b\right\} $
for any $z_{1}\neq z_{2}$ in $\overline{\Omega}$. In particular,
if $z_{1}=\omega(s_{1})$ and $z_{2}=\omega(s_{2})$ for $s_{1}>s_{2}$,
we have
\begin{equation}\label{eq:5.3}
0<\Re\left[\frac{H(z_{1})-H(z_{2})}{z_{1}-z_{2}}\right]=\frac{s_{1}-s_{2}}{|\omega(s_{1})-\omega(s_{2})|^{2}}\left[h^{-1}(s_{1})-h^{-1}(s_{2})\right],
\end{equation}
showing the strict monotonicity of $h^{-1}$.

Finally, we show the continuity of $\omega$ at $\infty$. Given any sequence $z_{n}\in\mathbb{C}^{+}\cup\mathbb{R}$ with $|z_{n}|\rightarrow+\infty$,
assume, in order to derive a contradiction, that there is a subsequence
$z_{n(k)}$ such that $\sup_{k\geq1}\left|\omega(z_{n(k)})\right|<+\infty$.
Passing further to a convergent subsequence, we may assume that $\omega(z_{n(k)})$
tends to a complex number $w\in\overline{\Omega}$ as $k\rightarrow\infty$.
By the continuity of $H$ on $\overline{\Omega}$ and global inversion,
we reach at the following contradiction:
\[
\left|H(w)\right|=\lim_{k\rightarrow\infty}|H\left(\omega(z_{n(k)})\right)|=\lim_{k\rightarrow\infty}|z_{n(k)}|=+\infty.
\]
Thus, the map $\omega$ has to be continuous at $\infty$.
\qed

\addvspace{0.1in}

\textit{Proof} of $\mathbf{Proposition\;\;\ref{Hprop2}}$: For $\alpha \in \partial\Omega\cap\mathbb{R}$, we have $f(\alpha)=0$; that is to say, $H^{*}(\alpha)=H(\alpha)\in \mathbb{R}$ and $H$ maps the entire vertical line $\{\alpha+iy:y>0\}$ into $\mathbb{C}^{+}$. Given the nature of $\Im H$, it is easy to see that these conditions are all equivalent to the condition $g(\alpha)\leq b$.

Assume $g(\alpha)\leq b$. Applying Proposition \ref{prop2.1} (3) to the Nevanlinna form $F(z)=bz-H(z)$ (so that the measure $\rho$ is indeed the $F$-representing measure for $F$), we obtain the existence of $H^{\prime}(\alpha)$ and the formula for the value of $H(\alpha)$. Conversely, the existence of $H^{\prime}(\alpha)$ in $[0,+\infty)$ implies \[0\leq H^{\prime}(\alpha)=\lim_{y\rightarrow 0^+}\Re \frac{H(\alpha+iy)-H(\alpha)}{iy}=\lim_{y\rightarrow 0^+}\frac{\Im H(\alpha+iy)}{y},\] whence $g(\alpha)\leq b$. So, the statements (1) to (5) are all equivalent.

We next verify that the angular derivative $\omega^{\prime}(h(\alpha))$ exists and the formula $\omega^{\prime}(h(\alpha))=1/H^{\prime}(\alpha)$ holds. Denoting $\alpha+i\epsilon=\omega(\xi_\epsilon)$ and $\alpha=\omega(s)$, one has
\[H^{\prime}(\alpha)=\lim_{\epsilon\to0^+}\frac{H(\alpha+i\epsilon)-H(\alpha)}{i\epsilon}=\lim_{\xi_\epsilon\rightarrow s}\frac{\xi_\epsilon-s}{\omega(\xi_\epsilon)-\omega(s)}\] by global inversion, which, according to Lindel\"{o}f theorem, is further equal to \[\lim_{w \rightarrow_{\sphericalangle} s}\frac{w-s}{\omega(w)-\omega(s)}=\frac{1}{\omega^{\prime}(h(\alpha))}.\]

Now, under the hypothesis $H^\prime(\alpha)>0$, we shall prove that the boundary curve $\partial\Omega=\{x+if(x):x\in\mathbb{R}\}$ is tangent to $\mathbb{R}$ at the point $\alpha$. Without loss of generality, assume $\alpha=0$. It suffices to prove that the curve $x+if(x)\to0$ tangentially as $x\to0$, that is to prove $\lim_{x\to0}f(x)/x=0$. If there exists a sequence $\{x_n\}$ converging to $0$ so that
$\lim_{n\to\infty}|f(x_n)/x_n|=\infty$, then
\[\lim_{n\to\infty}\frac{h(x_n)-h(0)}{f(x_n)}=\lim_{n\to\infty}\frac{H(x_n+if(x_n))-H(0)}{x_n+if(x_n)}\cdot\frac{1+\frac{f(x_n)}{x_n}i}{\frac{f(x_n)}{x_n}}=iH'(0),\] which is impossible because this limit is not a real number. Similarly, if the set $\{f(x)/x:x\neq0\}$ has a non-zero limit point $L$ as $x\to0$, then $H'(0)(1/L+i)\in\mathbb{R}$, a contradiction. All these considerations reveal that $\lim_{x\to0}f(x)/x=0$, as desired.

Finally, under the assumption $H^\prime(\alpha)>0$ again, $H(\gamma(t))\rightarrow_{\sphericalangle}H(\alpha)$ holds if and only if $\gamma(t)\rightarrow_{\sphericalangle}\alpha$ as $t\rightarrow 1^{-}$. Indeed, the ``if'' part follows from Proposition 2.2, while the converse implication is a consequence of global inversion and an application of Proposition \ref{prop2.1} (3) to $\omega$.
\qed

\addvspace{0.1in}

\textit{Proof} of $\mathbf{Proposition\;\;\ref{Hprop3}}$: To prove the conformality of $H$ for $x\in V$, we first look at the case where the measure $\rho$ is degenerate,
say, $\rho=c\delta_{s_{0}}$ for some $c>0$ and $s_{0}\in\mathbb{R}$.
A straightforward calculation shows that the positive set $V=(s_{0}-r,s_{0}+r)$,
the function $f$ is a semicircular curve $f(x)=I_{[s_{0}-r,s_{0}+r]}(x)\sqrt{r^{2}-(x-s_{0})^{2}}$, and the region $\Omega=\left\{ z\in\mathbb{C}^{+}:|z-s_{0}|>r\right\} $, where the radius $r=\sqrt{c(1+s_{0}^{2})/b}$. One verifies directly
that the complex derivative $H^{\prime}(z)\neq0$ if $z\in\Omega$
or $z=x+if(x)$, $x\in V$.

For a nondegenerate $\rho$, we again consider such $z\in\partial\Omega\cap\mathbb{C}^{+}$.
The derivative of $H$ at $z$ satisfies
\begin{multline*}
\left|H^{\prime}(z)\right|=\left|b-\int_{\mathbb{R}}\frac{1+s^{2}}{(z-s)^{2}}\,d\rho(s)\right|\geq b-\left|\int_{\mathbb{R}}\frac{1+s^{2}}{(z-s)^{2}}\,d\rho(s)\right|\\
>b-\int_{\mathbb{R}}\frac{1+s^{2}}{\left|z-s\right|^{2}}\,d\rho(s)=\frac{\Im H(z)}{\Im z}\geq0,
\end{multline*}
because the quotient $(z-s)^{2}/|z-s|^{2}$ is not a constant for
$\rho$-almost all $s\in\mathbb{R}$. The assertion (1) follows.

For (2), if $I\subset\mathbb{R}\setminus\overline{V}$ then Proposition
\ref{Hprop2} (3) and Lemma \ref{f1} imply $\rho(I)=0$. As a result, the map $H$
defined by the integral form (\ref{eq:3.1}) extends analytically across $I$
by reflection, i.e., $H(z)=\overline{H\left(\overline{z}\right)}$
for $z\in\mathbb{C}^{-}\cup I$. This extension takes real values
and is strictly increasing on $I$ by \eqref{eq:5.3}. The extension properties
of the global inverse $\omega$ follows from the holomorphic inverse function
theorem.

For (3), assume the map $\omega$ extends analytically to an open
disk $D$ centered at $s_{0}\in\mathbb{R}$. On the open interval
$J=D\cap\mathbb{R}$, the map $\omega$ has a power series expansion
\[
\omega(s)=\sum\nolimits _{n=0}^{\infty}a_{n}(s-s_{0})^{n},
\]
which yields the power series representation for $\Re\omega(s)=h^{-1}(s)$:
\[
h^{-1}(s)=\sum\nolimits _{n=0}^{\infty}(\Re a_{n})(s-s_{0})^{n},\quad s\in J.
\]
The last power series also converges for $z\in D$, and
hence it serves as an analytic extension for $h^{-1}$.
We use the same notation $h^{-1}$ to denote this extension.

For a further reference, we note that $\Im\omega$ is also real analytic
at $s_{0}$ by the same argument, and therefore it has a complex analytic extension near $s_0$. We write $\Im\omega$
for this analytic continuation.

The value of the complex derivative of $\omega$ at $s_{0}$ is not zero because $2b|\omega^{\prime}(s_0)|\geq 1$. This implies that $h^{-1}$ has the complex derivative $(h^{-1})^{\prime}(s_{0})=\Re a_{1}\neq0$.
Therefore, the inverse function theorem implies that $h^{-1}$ has
an analytic inverse $\widetilde{h}$ defined near the point $x_{0}=h^{-1}(s_{0})$.
The map $\widetilde{h}$ then serves as an analytic continuation for the homeomorphism
$h$, proving that $h$ is real analytic at $x_{0}$.

For the function $f$, in view of the inversion relationship
\begin{equation} \label{eq:5.4}
x+if(x)=\omega\left(H(x+if(x))\right)=\omega\left(h(x)\right),
\end{equation}
we take the composition $(\Im\omega)\circ\widetilde{h}$ of complex analytic
functions as the analytic continuation of $f$ near $x_{0}$.

To prove the converse in (3), assume that both $h$ and $f$ extend real analytically
(and hence complex analytically) to $x_{0}$. The complex derivative
$h^{\prime}(x_{0})$ coincides with its real derivative, which
is strictly positive because $h$ is strictly increasing on $\mathbb{R}$.
By the inverse function theorem again, $h^{-1}$ extends analytically
to $s_{0}=h(x_{0})$. The analyticity of $\omega$ at $s_{0}$ now
follows from the inversion relationship \eqref{eq:5.4}.

Finally, the assertion (4) is a direct application of the inverse function theorem. Note that the angular derivative of $\omega$ at any $s\in \mathbb{R}$ is always non-zero, because $\omega$ is an analytic self-map of $\mathbb{C}^{+}$.
\qed

\addvspace{0.1in}

\textit{Proof} of $\mathbf{Theorem\;\;\ref{boundaryR}}$:
The sets $A$, $B$, and $C$ are mutually disjoint because
they are distinguished by different values of the vertical limit
$G_{\nu}^{*}(\alpha)$. We shall prove that they form a partition
of the zero set $\partial\Omega\cap\mathbb{R}$.

By translating the measure $d\nu$ to $d\nu(s+\alpha)$, we may and do assume that $\alpha=0$. We first assume $0\in\partial\Omega\cap\mathbb{R}$
and prove that $0\in A\cup B\cup C$. Then Proposition \ref{Hprop2} says that $\Im H(iy)>0$
for all $y>0$. Since $H(z)=z+\varphi_{\mu}\left(F_{\nu}(z)\right)$,
we obtain
\begin{equation}\label{eq:5.5}
\frac{\Im F_{\nu}(iy)}{y}\int_{\mathbb{R}}\frac{(1+s^{2})\,d\sigma_{\mu}(s)}{|F_{\nu}(iy)-s|^{2}}=\frac{-\Im G_{\nu}(iy)}{y}\int_{\mathbb{R}}\frac{(1+s^{2})\,d\sigma_{\mu}(s)}{|1-sG_{\nu}(iy)|^{2}}<1
\end{equation} for all $y>0$.
We also know from the monotone convergence theorem that
\[
I_{1}=\int_{\mathbb{R}}\frac{d\nu(s)}{s^{2}}=\lim_{y\rightarrow0^{+}}\frac{-\Im G_{\nu}(iy)}{y}\in(0,+\infty]
\]
and
\[
I_{2}=1+\int_{\mathbb{R}}\frac{1+s^{2}}{s^{2}}\,d\rho_{\nu}(s)=\lim_{y\rightarrow0^{+}}\frac{\Im F_{\nu}(iy)}{y}\in(1,+\infty].
\]

There are two possibilities to consider.

Case (I): $I_{1}=+\infty$. The estimate \eqref{eq:5.5} implies
\begin{equation}\label{eq:5.6}
\lim_{y\rightarrow0^{+}}\int_{\mathbb{R}}\frac{1+s^{2}}{|1-sG_{\nu}(iy)|^{2}}\,d\sigma_{\mu}(s)=0.
\end{equation}
It follows that the vertical limit $F_{\nu}^{*}(0)=\lim_{y\rightarrow0^{+}}F_{\nu}(iy)=0$.
Indeed, if this were not true then we would be able to find a sequence $y_{n}\downarrow0$ so that $\sup_{n\geq1}|G_{\nu}(iy_{n})|<+\infty$.
By dropping to a convergent subsequence if necessary, we further assume
that $G_{\nu}(iy_{n})\rightarrow z\in\mathbb{C}$. Then the limit
\eqref{eq:5.6} and Fatou's lemma yield
\[
0\geq \int_{\mathbb{R}}\frac{1+s^{2}}{|1-sz|^{2}}\,d\sigma_{\mu}(s)>0,
\]
a contradiction. Now, with the fact $F_{\nu}^{*}(0)=0$, we apply
Fatou's lemma to \eqref{eq:5.5} again to conclude that $I_{2}<+\infty$,
\[
I_{3}=\int_{\mathbb{R}}\frac{1+s^{2}}{s^{2}}\,d\sigma_{\mu}(s)<+\infty,
\]
and $I_{2}I_{3}\leq1$. This shows that $0\in A$.

Case (II): $I_{1}<+\infty$. Proposition \ref{prop2.1} (2) implies that the
vertical limit $G_{\nu}^{*}(0)$ exists and belongs to $\mathbb{R}$.
Taking the limit infinimum as $y\rightarrow0^{+}$ in \eqref{eq:5.5}, we get
\[
I_{4}=\int_{\mathbb{R}}\frac{1+s^{2}}{(1-sG_{\nu}^{*}(0))^{2}}\,d\sigma_{\mu}(s)\leq\frac{1}{I_{1}},
\]
showing that $0\in B$ if $G_{\nu}^{*}(0)\in\mathbb{R}\setminus\left\{ 0\right\} $
and that $0\in C$ if $G_{\nu}^{*}(0)=0$.

We have shown that $(\partial\Omega\cap\mathbb{R})\subset A\cup B\cup C$.
The proof of the opposite inclusion relies on the fact:

\begin{eqnarray*}
g(0) & = & \lim_{y\rightarrow0^{+}}\frac{-\Im H(iy)+y}{y}\\
 & = & \lim_{y\rightarrow0^{+}}\frac{\Im F_{\nu}(iy)}{y}\int_{\mathbb{R}}\frac{1+s^{2}}{|F_{\nu}(iy)-s|^{2}}\,d\sigma_{\mu}(s)\\
 & = & \lim_{y\rightarrow0^{+}}\frac{-\Im G_{\nu}(iy)}{y}\int_{\mathbb{R}}\frac{1+s^{2}}{|1-sG_{\nu}(iy)|^{2}}\,d\sigma_{\mu}(s).
\end{eqnarray*}
Thus, if $0\in A$ then we have $F_{\nu}^{*}(0)=0$ and $I_{2},I_{3}<+\infty$
with $I_{2}I_{3}\leq1$. Moreover, Proposition 2.1 (3) shows that $F_{\nu}(iy)\rightarrow_{\sphericalangle}0$ as $y\rightarrow 0^+$. Thus the dominated convergence theorem with the dominating function
\[\frac{1+s^2}{|\xi-s|^2}=\left|\frac{\xi}{\xi-s}-1\right|^2\frac{1+s^2}{s^2}\leq(\sqrt{(\Re\xi/\Im\xi)^2+1}+1)^2\frac{1+s^2}{s^2}\in L^2(\sigma_\mu)\] for any $\mathbb{C}^+\ni\xi\rightarrow_{\sphericalangle}0$ implies $g(0)=I_{2}I_{3}\leq1$. If $0\in B\cup C$, then we have
$G_{\nu}^{*}(0)\in\mathbb{R}$ and $I_{1},I_{4}<+\infty$ with $I_{1}I_{4}\leq1$,
implying further that $g(0)=I_{1}I_{4}\leq1$. In all cases we have verified that $g(0) \leq 1$, and this means $0\in\partial\Omega\cap\mathbb{R}$ by Proposition 3.2.

In conclusion, we have $\partial\Omega\cap\mathbb{R}= A\cup B\cup C$.
Additionally, our arguments and Proposition \ref{Hprop2} indicate that the angular
derivative
\[
\omega^{\prime}(h(\alpha))=\frac{1}{H^{\prime}(\alpha)}=\frac{1}{1-g(\alpha)}=\begin{cases}
1/(1-I_{2}I_{3}) & \text{if }\alpha\in A, \\
1/(1-I_{1}I_{4}) & \text{if }\alpha\in B\cup C.
\end{cases}
\]\qed

\addvspace{0.1in}

\textit{Proof} of $\mathbf{Theorem\;\;\ref{thmR1}}$: Throughout the proof we use the two boundary parametrizations
$s=h(x)=H(x+if(x))$ and $\omega(s)=x+if(x)$ for $x\in\mathbb{R}$.
As usual, the indicator $I_\emptyset$ of the empty set $\emptyset$ is set
to be the constant zero function.

We have seen in Example \ref{exam3.8} that the finite set $D=h(A)$ supports
the singular (atomic) part of $\mu\boxplus\nu$. So the absolutely
continuous part $(\mu\boxplus\nu)_{\text{ac}}$ is supported on $\mathbb{R}\setminus D$,
written as a disjoint union $\mathbb{R}\setminus D=h(V)\cup h(B\cup C)$,
and
\begin{eqnarray*}
d(\mu\boxplus\nu)_{\text{ac}}(s) & = & \pi^{-1}\left(-\Im G_{\mu\boxplus\nu}\right)^{*}(s)\,d\lambda(s)\\
 & = & \pi^{-1}\left[I_{h(V)}(s)+I_{h(B\cup C)}(s)\right]\left(-\Im G_{\mu\boxplus\nu}\right)^{*}(s)\,d\lambda(s).
\end{eqnarray*}
In particular, we have the push-forward formula:
\begin{equation} \label{eq:5.7}
d(\mu\boxplus\nu)_{\text{ac}}(h(x))=\pi^{-1}\left[I_{V}(x)+I_{B\cup C}(x)\right]\left(-\Im G_{\mu\boxplus\nu}\right)^{*}(h(x))\,d\lambda(h(x)).
\end{equation}

We shall identify the limit $\left(-\Im G_{\mu\boxplus\nu}\right)^{*}(h(x))$
separately on each part. We first address the case of $x\in V$. Observe
that the composition $G_{\nu}\circ\omega$ extends continuously to a map from $\mathbb{C}^{+}\cup h(V)$ to $\mathbb{C}^{-}$, because $\omega(s)\in\mathbb{C}^{+}$. So
the subordination $G_{\mu\boxplus\nu}=G_{\nu}\circ\omega$ in $\mathbb{C}^{+}$
implies that \begin{multline*}
\left(-\Im G_{\mu\boxplus\nu}\right)^{*}(h(x))=(-\Im G_{\nu}\circ\omega)^{*}(h(x))=(-\Im G_{\nu}\circ\omega)(h(x))\\
=-\Im G_{\nu}(\omega(s))=-\Im G_{\nu}(x+if(x))=f(x)\int_{\mathbb{R}}\frac{d\nu(s)}{(x-s)^{2}+f(x)^{2}}.
\end{multline*} As for points $x$ in $B\cup C$, the continuity of $F_{\mu\boxplus\nu}$ from [\ref{BBG09}] and Theorem \ref{boundaryR} show that $\left(-\Im G_{\mu\boxplus\nu}\right)^{*}(h(x))=0$.

By the preceding discussion, we now define the function $p_{\mu\boxplus\nu}:\mathbb{R}\rightarrow[0,\infty)$
by
\[
p_{\mu\boxplus\nu}(h(x))=\frac{f(x)}{\pi}\int_{\mathbb{R}}\frac{d\nu(s)}{(x-s)^{2}+f(x)^{2}},\qquad x\in V,
\]
and $p_{\mu\boxplus\nu}\circ h=0$ on $\mathbb{R}\setminus V$, so
that the formula \eqref{eq:5.7} can be re-casted into
\[
d(\mu\boxplus\nu)_{\text{ac}}=p_{\mu\boxplus\nu}\,d\lambda.
\] The continuity of this version of density follows from that of $F_{\mu\boxplus\nu}$.

We next address the local analyticity of $p_{\mu\boxplus\nu}$. The analyticity on $\mathbb{R}\setminus \overline{V}$ is somehow trivial since any $s \in h(\mathbb{R}\setminus \overline{V})$ belongs to an open interval on which the map $p_{\mu\boxplus\nu}$ is constantly zero. The analyticity of $p_{\mu\boxplus\nu}$ on $h\left(V\right)$ follows directly from Proposition \ref{Hprop3} (1) and (3) and the integral representation of $p_{\mu\boxplus\nu}$. In conclusion, $p_{\mu\boxplus\nu}$ is the version of the Radon-Nikodym derivative $d(\mu\boxplus\nu)_{\text{ac}}/d\lambda$ that we are looking for, that is, the assertion (2) is now proved.

The continuity of $p_{\mu\boxplus\nu}$ implies that the topological
support
\[
\text{supp}((\mu\boxplus\nu)_{\text{ac}})=\text{supp}(p_{\mu\boxplus\nu})=\overline{h(V)}.
\]

To finish the proof of the assertion (1), we need to show that $\text{supp}(\nu_{\text{sc}})\cup\text{supp}(\nu_{\text{ac}})\subset\overline{V}$.
Suppose, in order to derive a contradiction, that there exists $x\in[\text{supp}(\nu_{\text{sc}})\cup\text{supp}(\nu_{\text{ac}})]\cap[\mathbb{R}\setminus\overline{V}]$. It follows that there exists a closed interval $J$ such that $x\in J$,
$J\subset B\cup C$, and $\nu(J)>0$. In this case it is elementary
to find nested closed subintervals $J\supset I_{1}\supset I_{2}\supset\cdots$
satisfying $\lambda(I_{n})=\lambda(J)2^{-n}$, $\nu(I_{n})\geq\nu(J)2^{-n}$,
and $\bigcap_{n=1}^{\infty}I_{n}=\{\alpha\}$. Then we reach at
\[
\infty>\int_{\mathbb{R}}\frac{d\nu(s)}{(\alpha-s)^{2}}\geq\frac{\nu(J)}{\lambda(J)^{2}}\,2^{n},\quad n\geq1,
\]
a contradiction. The assertion (1) is therefore proved.

The assertion (3) follows from writing the open set $V$ as a disjoint
union of at most countably many open intervals.

Finally, we show the assertion (4). If $A$ is not empty and $\alpha\in A$, then the inequalities in Theorem \ref{boundaryR} and Example \ref{exampleR1} imply that the point $s_{\mu}$ exists. Moreover, as seen in the proof of Theorem \ref{boundaryR}, we have $1/\omega^{\prime}(h(\alpha))=1-I_{2}I_{3}$. By Corollary \ref{measureatom} and Example \ref{exampleR1}, the last identity can be re-written as
\[
\frac{\nu(\left\{ \alpha\right\} )}{\omega^{\prime}(h(\alpha))}=\nu(\left\{ \alpha\right\} )+\mu(\{s_{\mu}\})=\mu\boxplus\nu(\{\alpha+s_{\mu}\}).
\] The last equality follows from the results in [\ref{BerVoicu98}].
The proof is finished by observing that $\alpha+s_{\mu}=h(\alpha)$
from Proposition \ref{Hprop2} and Example \ref{exampleR1}.
\qed

\addvspace{0.1in}

\textit{Proof} of $\mathbf{Theorem\;\;\ref{nocomponentR2}}$: In view of Theorem \ref{thmR1}, it suffices to show that $n\left(\overline{V}\right)<\infty$, where $V=\left\{ x\in\mathbb{R}:g(x)>1\right\}$ and
\begin{equation} \label{eq:5.8}
g(x)=\int_{\mathbb{R}}\frac{1+s^{2}}{(x-s)^{2}}\, d\rho(s),\qquad x\in\mathbb{R}.
\end{equation}

We assume $V\neq (-\infty,\infty)$ to avoid the triviality. By writing the open set $V$ into a countable union of components, its closure $\overline{V}$
can be written as a countable union of disjoint closed intervals as
follows:
\[
\overline{V}=\bigcup_{k\in K}[a_{k},b_{k}]\cup(-\infty,a]\cup[b,\infty),
\]
where $-\infty<a_{k}<b_{k}<\infty$ for all $k\in K$, $-\infty\leq a<b\leq\infty$,
the index set $K$ is at most countable, and the intervals $(-\infty,-\infty]$
and $[\infty,\infty)$ are interpreted as the empty set.
We also assume that the collection
\[
\mathcal{I}=\left\{ [a_{k},b_{k}]:k\in K\right\}
\]
of bounded components in $\overline{V}$ is not empty and aim to show that it is a finite set, with an estimate for the cardinality of $\mathcal{I}$. We do this by partitioning $\mathcal{I}$ into \[\mathcal{I}=\{I\in \mathcal{I}:I\cap\text{supp}(\nu)\neq\emptyset\}\cup \{I\in \mathcal{I}:I\cap\text{supp}(\nu)=\emptyset\}\] and then count the cardinality of the two sets in this partition.

Case (1): $I\cap\text{supp}(\nu)\neq\emptyset$. For such an $I$, there exists a component $J_{I}$ in $\text{supp}(\nu)$ such that $I\cap J_{I} \neq\emptyset$. If $J_I$ is a singleton set then $J_I \subset I$. When $J_I$ is an interval, we claim that we still have $J_I \subset I$. Indeed, assume without loss of generality that $I=[a_k,b_k]$, $J_I=[c,d]$, and $-\infty\leq c\leq b_k<d\leq \infty$. Then the maximality of $I$ implies that $(b_k,b_k+\epsilon)\subset \text{supp}(\nu) \cap (\mathbb{R}\setminus \overline{V})$ for all small $\epsilon>0$. Since $\text{supp}(\nu_{\text{sc}})\cup \text{supp}(\nu_{\text{ac}})\subset  \overline{V}$, this means that there will be uncountably many atoms of $\nu$ lying in $\mathbb{R}\setminus \overline{V}$, a contradiction. Therefore we must have $J_{I}\subset I$. Hence we can conclude that \[\# \{I\in \mathcal{I}:I\cap\text{supp}(\nu)\neq\emptyset\} \leq n(\text{supp}(\nu)).\]

Case (2): $I\cap\text{supp}(\nu)=\emptyset$. Every such an $I=[a_k,b_k]$ is contained in a
unique component $I_1$ of the open set $\mathbb{R}\setminus\text{supp}(\nu)$. Notice that $G_{\nu}$ continues analytically
across $I_{1}$ by reflection and $G_{\nu}$ is strictly decreasing on $I_{1}$. Our argument now divides into two
mutually exclusive subcases according to whether $G_{\nu}$ vanishes on the interval $I_{1}$ or not. Both subcases would show that each given component $I_1$ can only contain finitely many such $I$.

Subcase (2.1): $G_{\nu}(x)\neq0$ for all $x\in I_{1}$. In this case
we show that the closed interval $J=F_{\nu}(I)$ must contain
a component $C_{J}$ in $\text{supp}(\sigma_{\mu})$.

We first observe that $J\cap\text{supp}(\sigma_{\mu})\neq\emptyset$. Indeed,
if $J$ is disjoint from this support, then the function
\begin{equation} \label{eq:5.9}
H(z)=z+(\varphi_{\mu}\circ F_{\nu})(z)=z+\gamma_{\mu}+\int_{\mathbb{R}}\frac{1+F_{\nu}(z)s}{F_{\nu}(z)-s}\, d\sigma_{\mu}(s)
\end{equation}
extends analytically across the interval $I$ by reflection. Since $H$ admits the integral form (\ref{eq:3.1}) with the measure $\rho$ in \eqref{eq:5.8}, an application of Proposition \ref{prop2.4} (1) to the function $F(z)=z-H(z)$ implies that the measure $\rho$ does not charge
the interval $I$. Because $a_k,b_k \notin V$, we have $g(a_k),g(b_k)\leq 1$ by Proposition \ref{Hprop2}. Since $g$
is $C^2$ and strictly convex on $(a_k,b_k)$, it follows that $g\leq 1$ on the whole interval $(a_k,b_k)$. Proposition \ref{Hprop2} then implies that $I\subset \mathbb{R}\setminus V$, which contradicts to the fact $I\subset \overline{V}$. Thus, $J$
must intersect the support of $\sigma_{\mu}$.

As a result of $J\cap\text{supp}(\sigma_{\mu})\neq\emptyset$, there
will be a closed component $C_{J}$ in $\text{supp}(\sigma_{\mu})$ such
that $C_{J}\cap J\neq\emptyset$. We claim that $C_{J}\subset J$ in this case. Suppose,
in order to get a contradiction, that $C_{J}\cap(\mathbb{R}\setminus J)\neq\emptyset$.
Without loss of generality, we assume $J=[A,B]$ and $C_{J}=[C,D]$
with $-\infty \leq C\leq B<D\leq \infty$. Choose a small $\epsilon>0$ so that $(B,B+\epsilon)\subset C_{J}\setminus J$
and that the pre-image $F_{\nu}^{-1}\left((B,B+\epsilon)\right)$ under the increasing map $F_{\nu}$ is an open interval $I_2=(b_{k},c_{k})$ adjacent to $I=[a_{k},b_{k}]$ and $I_2 \subset I_{1}$. Notice that $c_k\in\mathbb{R}\setminus\overline{V}$,
and so we may assume $I_2\subset\mathbb{R}\setminus\overline{V}$ by selecting sufficiently small $\epsilon$ if needed.
Since $g\leq1$ on $I_2$, we have $\rho(I_2)=0$ by Lemma \ref{f1}. This
and \eqref{eq:5.9} imply that $\sigma_{\mu}(F_{\nu}(I_2))=0$, contradicting
to the fact that $F_{\nu}(I_2)\subset C_{J}\subset\text{supp}(\sigma_{\mu})$.
Therefore, we must have $C_{J}\subset J$ in this case.

We now count the number of components $I$ in Subcase (2.1) as follows. First, to each fixed $I_1$, if two different $I,I^\prime \in \mathcal{I}$ are contained in $I_1$ then the strict monotonicity of $F_{\nu}$ on $I_1$ shows that $J=F_{\nu}(I)$ and $J^{\prime}=F_{\nu}(I^{\prime})$ are disjoint and hence $C_J \neq C_{J^{\prime}}$. Therefore $I_1$ can contain at most $n(\text{supp}(\sigma_{\mu}))$ many $I$'s from $\mathcal{I}$. Secondly, there are at most $n(\mathbb{R}\setminus\text{supp}(\nu))$ many $I_1$'s from the assumption $n(\text{supp}(\nu))<\infty$. We conclude that \[\# \{I \,\,\text{in Subcase (2.1)}\} \leq n(\text{supp}(\sigma_{\mu}))\,n(\mathbb{R}\setminus\text{supp}(\nu)).\]

Subcase (2.2): $G_{\nu}(x_{1})=0$ for some $x_{1}\in I_{1}$. Then
such a zero $x_{1}$ is unique due to the strict monotonicity of $G_{\nu}$ on $I$. We partition this subcase into \[\{I \,\,\text{in Subcase (2.2)}\}=\{I:x_1\in I\}\cup \{I:x_1<\min{I}\}\cup\{I:\max{I}<x_1\}.\]

To each given $I_1$ in this subcase, there can only be one $I$ satisfying $x_1\in I$. So we have $\# \{I:x_1\in I\}\leq n(\mathbb{R}\setminus\text{supp}(\nu))$.

For the case $x_1<\min{I}$, we are counting the number of $I$'s lying on the right side of the zero $x_{1}$. By replacing $I_1=(a,b)$ with the smaller interval $(x_1,b)$, the counting simply reduces to that in Subcase (2.1) because $G_{\nu}$ vanishes only at $x_{1}$. (Note that the argument in Subcase (2.1) does not depend on the maximal connectedness of $I_1$.) So we have $\# \{I:x_1<\min{I}\}\leq n(\text{supp}(\sigma_{\mu}))\,n(\mathbb{R}\setminus\text{supp}(\nu))$.

The remaining case of $\max{I}<x_1$ is clearly about those $I$'s on the left side of the unique zero $x_{1}$. We have $\# \{I:\max{I}<x_1\}\leq n(\text{supp}(\sigma_{\mu}))\,n(\mathbb{R}\setminus\text{supp}(\nu))$, finishing the counting in Subcase (2.2).

In conclusion, we have shown that $\mathcal {I}$ is a finite set and \begin{multline*}n\left(\text{supp}((\mu\boxplus\nu)_{\text{ac}})\right)=n\left(h(\overline{V})\right)=n(\overline{V})\\ \leq 2+n(\text{supp}(\nu))+[1+3n(\text{supp}(\sigma_{\mu}))]n(\mathbb{R}\setminus\text{supp}(\nu)).\end{multline*} Theorem \ref{thmR1} and Example \ref{exam3.8} imply further that \begin{multline*}n\left(\text{supp}(\mu\boxplus\nu)\right)\leq n\left(\text{supp}((\mu\boxplus\nu)_{\text{ac}})\right)+n\left(\text{supp}((\mu\boxplus\nu)_{\text{s}})\right) \\ \leq  n\left(\text{supp}((\mu\boxplus\nu)_{\text{ac}})\right)+ \text{Cardinality}(A).\end{multline*}\qed

It was pointed out to us by an anonymous referee that our estimate may not be sharp, since there might be some atoms of $\mu\boxplus\nu$ inside the connected components of the absolutely continuous support $\text{supp}((\mu\boxplus\nu)_{\text{ac}})$.

\subsection{Property (H)} \indent\par \textit{Proof} of $\mathbf{Theorem\;\;\ref{propertyH1}}$: We have seen from the monotone convergence theorem that \begin{equation}\label{eq:5.10}-\lim_{\epsilon\rightarrow 0^{+}}\epsilon^{-1}\Im\varphi_\mu(x+i\epsilon) =\int_\mathbb{R}\frac{1+s^2}{(x-s)^2}\,d\sigma_\mu(s),\quad x\in \mathbb{R},\end{equation} and $-y\Im\varphi_\mu(iy)\rightarrow \sigma_\mu(\mathbb{R})+m_2(\sigma_\mu)$ as $y\rightarrow \infty$.

The case of a degenerate $\nu$ reduces to Proposition \ref{anadensityR}. We shall
assume that $\nu$ is nondegenerate. By Theorem \ref{boundaryR} and Remark \ref{remarkR2}, the value of
\eqref{eq:5.10} being strictly greater than $1$ means that there are no type $A$ or type $B$
boundary points for $\mu\boxplus\nu$, and the condition $\sigma_{\mu}(\mathbb{R})+m_{2}(\sigma_{\mu})=+\infty$
implies that the type $C$ points do not exist. In other words, we have $V=\mathbb{R}$ and $p_{\mu\boxplus\nu}>0$
everywhere on $\mathbb{R}$.

Conversely, we assume $V=\mathbb{R}$ for all $\nu\in\mathcal{P}_{\mathbb{R}}$. When $\nu=\delta_0$, Proposition \ref{anadensityR} shows that the integral in \eqref{eq:5.10} exceeds 1. Assume, in order to obtain a contradiction, that $var(\mu)<+\infty$. Consider $\nu\in \mathcal{P}_{\mathbb{R}}$ such that $F_\nu(z)=z+(1+z)(1-z)^{-1}var(\mu)$, that is, $a=0$ and $\rho_\nu=var(\mu)\,\delta_1$ in (2.1). Remark \ref{remarkR2} shows that the point $\alpha=1$ belongs to the set $C$ associated with such a $\nu$, a contradiction to $V=\mathbb{R}$. So we must have $\sigma_\mu(\mathbb{R})+m_2(\sigma_\mu)=var(\mu)=+\infty$.\qed

\addvspace{0.1in}

\subsection{Analyticity at zeros} \indent\par \textit{Proof} of $\mathbf{Proposition\;\;\ref{nonanaR}}$: For the
assertion (1), suppose that $p_{\mu\boxplus\nu}$ is analytic on the open interval $I=(s_0-\epsilon,s_0+\epsilon)$. We have seen that such a function $p_{\mu\boxplus\nu}$ admits an analytic continuation (still denoted by $p_{\mu\boxplus\nu}$) to $D=\{z\in
\mathbb{C}:|z-s_0|<\epsilon\}$ through its power series expansion. If the zero $s_0$ is not isolated, then $p_{\mu\boxplus\nu}$ would be constantly zero on $I$, contradicting the fact $s_0\in \text{supp}((\mu\boxplus\nu)_{\text{ac}})$.

The case when $\alpha$ is not an isolated point in $\partial\Omega\cap\mathbb{R}$ has been dealt with in (1). In the assertion (2), Now let $\alpha$ be an isolated point in $\partial\Omega\cap\mathbb{R}$. In (2), if $\alpha\in B \cup C$, then the analyticity of $p_{\mu\boxplus\nu}$ at $s_0$ and Julia-Carath\'{e}odory theory
yield $G_{\mu\boxplus\nu}^\prime(s_0)\in \mathbb{R}$, which leads to the following contradiction
\begin{align*}
-G_{\mu\boxplus\nu}^\prime&(s_0)=\liminf_{z\to s_0}\frac{-\Im G_{\mu\boxplus\nu}(z)}{\Im
z}=\liminf_{z\to s_0}\cdot\frac{-\Im
G_\nu(\omega(z))}{\Im\omega(z)}\frac{\Im\omega(z)}{\Im z} \\
&\geq \left(\liminf_{z\to\alpha}\frac{-\Im G_\nu(z)}{\Im
z}\right)\left(\liminf_{z\to s_0}\frac{\Im\omega(z)}{\Im
z}\right)=\int_{\mathbb{R}} \frac{d\nu(s)}{(\alpha-s)^2}\cdot\omega^{\prime}(s_0)=+\infty
\end{align*} by Proposition \ref{prop2.1}. The case of $\alpha\in A$ is proved in the same way by making use of Corollary \ref{measureatom}. \qed

\addvspace{0.1in}

\textit{Proof} of $\mathbf{Theorem\;\;\ref{analyticR1}}$: We first remark that our hypothesis $\omega^{\prime}(s_{0})<+\infty$
and Proposition \ref{prop2.1} (3) imply that $H^{\prime}(\alpha)=1/\omega^{\prime}(s_{0})\neq0$
and $\omega(z)\rightarrow_{\sphericalangle}\alpha$ as $z\rightarrow_{\sphericalangle}s_{0}$.
Also, the analyticity of the measure $\sigma_{\mu}$ at the point
$F_{\nu}^{*}(\alpha)$ is equivalent to that of the integral form
$\varphi_{\mu}$ at the same point by Proposition \ref{prop2.4} and Corollary \ref{cor2.6}. In the case $\alpha\in C$,
this means that $R_{\mu}(z)=\varphi_{\mu}(1/z)$ is analytic at $z=0$.

We first consider the case $\alpha\in B$. Assume $p_{\mu\boxplus\nu}$ is analytic
at $s_{0}$. Proposition \ref{prop2.4} implies that $G_{\mu\boxplus\nu}$ extends
analytically to the point $s_{0}$, with the value
\[
G_{\mu\boxplus\nu}(s_{0})=\lim_{y\rightarrow0^{+}}G_{\nu}\left(\omega(s_{0}+iy)\right)=G_{\nu}^{*}(\alpha)\in\mathbb{R}\setminus\left\{ 0\right\} .
\]
Hence the transform $F_{\mu\boxplus\nu}$ extends analytically
to the point $s_{0}$ and $F_{\mu\boxplus\nu}(s_{0})=F_{\nu}^{*}(\alpha)$.
The inverse relationship between $H$ and $\omega$ yields
\[
z=H(\omega(z))=\omega(z)+(\varphi_{\mu}\circ F_{\mu\boxplus\nu})(z),\qquad z\in\mathbb{C}^{+}.
\]
Combining with the analyticity of $\varphi_{\mu}$ at $F_{\nu}^{*}(\alpha)=F_{\mu\boxplus\nu}(s_{0})$, we see that the
map $\omega$ extends analytically to the point $s_{0}$ and the value
of $\alpha=\omega(s_{0})$ is given by
\[
\alpha=s_{0}-\varphi_{\mu}\left(F_{\nu}^{*}(\alpha)\right)=s_{0}-\gamma_{\mu}+\int_{\mathbb{R}}\frac{1+sF_{\nu}^{*}(\alpha)}{s-F_{\nu}^{*}(\alpha)}\,d\sigma_{\mu}(s),
\]
where the convergence of the integral is ensured by the fact $\alpha\in B$ and Proposition \ref{prop2.1} (3). Then Proposition \ref{Hprop3} (4) shows that $H$ extends analytically
to $\alpha$, which allows us to conclude that $G_{\nu}$
also extends analytically to $\alpha$ because $G_{\nu}=G_{\mu\boxplus\nu}\circ H$.
Thus, the analyticity of the measure $\nu$ follows from Proposition \ref{prop2.4}.

Conversely, assume that $G_{\nu}$ extends analytically to $\alpha$.
Since $\alpha\in B$, the reciprocal $F_{\nu}$ extends analytically
to $\alpha$ and $F_{\nu}(\alpha)=F_{\nu}^{*}(\alpha)$.
In view of the definition $H(z)=z+(\varphi_{\mu}\circ F_{\nu})(z)$ for $z\in\mathbb{C}^{+}$,
the map $H$ extends analytically to the point $\alpha$, and so does
the map $\omega$ to the point $s_0$ by Proposition \ref{Hprop3} (4). The analyticity of the free
convolution $\mu\boxplus\nu$ now follows from the subordination $G_{\mu\boxplus\nu}=G_{\nu}\circ\omega$ and Proposition \ref{prop2.4}.

The case of $\alpha\in C$ is proved in the same way, except one uses the equations
\[H(z)=z+(R_{\mu}\circ G_{\nu})(z),\quad
z=\omega(z)+(R_{\mu}\circ G_{\mu\boxplus\nu})(z)\]
to show the analytic continuation of $H$ and $\omega$.

In the case of $\alpha\in A$, having a strict inequality in Theorem \ref{boundaryR} gives that
$\mu\boxplus\nu(\{s_{0}\})>0$ by Remark \ref{remarkR2} and that $\nu(\{\alpha\})>0$ and
the angular derivatives
\begin{equation}\label{eq:5.12}
F_{\mu\boxplus\nu}^{\prime}(s_{0})=F_{\nu}^{\prime}(\alpha)\omega^{\prime}(s_{0})
\end{equation}
by Theorem \ref{thmR1} (4). Note that $s_0$ is an isolated atom of $\mu\boxplus\nu$ since $\mu$ has only one atom. As we have seen in the case $\alpha\in B$, the analytic continuation of $F_{\mu\boxplus\nu}$ at the point $s_0$ amounts to that of $F_\nu$ at $\alpha$.
Moreover, in view of \eqref{eq:5.12}
the point $s_{0}$ is a simple zero for $F_{\mu\boxplus\nu}$ if and
only if $\alpha$ is a simple zero for $F_{\nu}$.
By Corollary \ref{analyextR2}, the density $p_{\mu\boxplus\nu}$ is analytic at
$s_{0}$ if and only if $F_{\mu\boxplus\nu}$
extends analytically to the point $s_{0}$ and this extension has
$s_{0}$ as a simple zero. Therefore, the analyticity of $p_{\mu\boxplus\nu}$ at $s_0$ yields that $\alpha$ is an isolated atom of $\nu$ for otherwise $F_\nu\equiv0$ and that the singular continuous part of the restriction of $\nu$ on some open interval containing $\alpha$ vanishes. Consequently, by Corollary
\ref{analyextR2} again we conclude that $p_{\mu\boxplus\nu}$ is analytic at $s_{0}$ if and only if the measure $\nu$ is meromorphic at
$\alpha$. \qed

\section*{Acknowledgments} The authors would like to thank an anonymous referee for valuable comments on this work and pointing out one mistake in the proof of Theorem 3.19. The first author
was supported by a grant from the Ministry of Science and Technology in Taiwan MOST 110-2628-M-110-002-MY4. The second author was supported by the
NSERC Canada Discovery Grant RGPIN-2016-03796.


\begin{thebibliography}{99}

\bibitem{} \label{Anshelevich02} M. Anshelevich, It\^{o} formula for free stochastic integrals, {\it J. Funct.
Anal.} {\bf188}, 292-315 (2002).

\bibitem{} \label{Anshelevich13} M. Anshelevich, Generators of some non-commutative stochastic processes, {\it Probab.
Theory Rel. Fields} {\bf157}, 777-815 (2013).

\bibitem{} \label{AriHasebe14} O. Arizmendi, T. Hasebe, Classical and free infinite
divisibility for Boolean stable laws. {\it Proc. Amer. Math. Soc.}
{\bf142}, 1621-1632 (2014).

\bibitem{} \label{Nielsen05} O. E. Barndorff-Nielsen and S. Thorbj{\o}rnsen,
The L\'{e}vy-It\^{o} decomposition in free probability, {\it Probab.
Theory Rel. Fields} {\bf131}, 197-228 (2005).

\bibitem{} \label{Belinschi08} S. T. Belinschi, The Lebesgue
decomposition of the free additive convolution of two probability
distributions, {\it Probab. Theory Rel. Fields} {\bf142}, 125-150
(2008).

\bibitem{} \label{Linfty} S. T. Belinschi, $L^{\infty}$-boundedness of density for free additive convolution, {\it Rev. Roumaine Math. Pures Appl.} {\bf59}, 173-184 (2014).

\bibitem{} \label{BelinschiJCW} S. T. Belinschi, A noncommutative version of the Julia-Wolff-Carath\'{e}odory theorem. {\it J. Lond. Math. Soc.} (2) {\bf95}, 541-566 (2017).

\bibitem{} \label{BelBer05} S. T. Belinschi, H. Bercovici, Partially defined semigroups relative
to multiplicative free convolution, {\it Int. Math. Res. Not.}
{\bf2}, 65-101 (2005).

\bibitem{} \label{BBG09} S. T. Belinschi, F. Benaych-Georges, A.
Guionnet, Regularization by free additive convolution, square  and
rectangular cases, {\it Compl. Anal. Oper. Theory} {\bf3}, 611-660
(2009).

\bibitem{} \label{normal} S. T. Belinschi, M. Bo\'{z}ejko, F. Lehner, R. Speicher, The normal distribution is $\boxplus$-infinitely divisible, {\it Adv. Math.} {\bf226}, 677-3698 (2011).

\bibitem{} \label{BerPata99} H. Bercovici, V. Pata, Stable laws and
domains of attraction in free probability theory {\it Annals of
Mathematics} {\bf149}, 1023-1060 (1999).

\bibitem{} \label{BerVoicu93} H. Bercovici, D. Voiculescu, Free Convolutions of measures with
unbounded support, {\it Indiana Univ. Math. J.} {\bf42},
733-773 (1993).

\bibitem{} \label{BerVoicu98} H. Bercovici, D. Voiculescu, Regularity questions for free
convolution, in: Nonselfadjoint operator algebras, operator theory,
and related topics, {\it Oper. Theory Adv. Appl.}, {\bf104} 37-47,
Birkhauser, Basel, 1998.

\bibitem{} \label{BW} H. Bercovici, J.-C. Wang, On freely indecomposable measures, {\it Indiana Univ. Math. J.} {\bf57}, 2601-2610 (2008).

\bibitem{} \label{BWZ} H. Bercovici, J.-C. Wang, and P. Zhong, Superconvergence to freely infinitely divisible distributions, {\it Pacific J. Math.} {\bf292}, 273-290 (2018).

\bibitem{} \label{Biane97} P. Biane, On the free convolution with a semi-circular
distribution, {\it Indiana Univ. Math. J.} {\bf46}, 705-718
(1997).

\bibitem{} \label{Biane98} P. Biane, Processes with free increments, {\it Math. Z.} {\bf227}, 143-174 (1998).

\bibitem{} \label{Cauchytrans} J. A. Cima, A. L. Matheson, and W. T. Ross, {\it The Cauchy
transform}, Mathematical Surveys and Monographs, {\bf125}, A.M.S.,
Providence, RI, 2006.

\bibitem{} \label{Greenstein60} D. Greenstein, On the analytic continuation of functions which map the upper half plane
into itself, {\it J. Math. Anal. Appl.} {\bf1}, 355-362 (1960).

\bibitem{} \label{Huang14} H.-W. Huang, Supports of measures in a
free additive convolution semigroup, {\it Int. Math.
Res. Not.}, 4269-4292 (2015).

\bibitem{} \label{Pata} V. Pata, The central limit theorem for free additive convolution. {\it J. Funct. Anal.} {\bf140}, 359-380 (1996).

\bibitem{} \label{Rudin} W. Rudin, Real and complex analysis, McGraw-Hill Book Co., New York, 1987.

\bibitem{} \label{BSimon} B. Simon,
{\it Trace ideals and their applications}, London Mathematical Society Lecture Note Series, 35. Cambridge University Press, Cambridge-New York, 1979.

\end{thebibliography}
\end{document}